\newtheorem{theorem}{Theorem}[section]
\newtheorem{proposition}[theorem]{Proposition}
\newtheorem{corollary}[theorem]{Corollary}
\newtheorem{lemma}[theorem]{Lemma}
\newtheorem{example}[theorem]{Example}
\newtheorem{remark}[theorem]{Remark}
\newtheorem{definition}[theorem]{Definition}
\newcommand{\setone}{\mathds{1}}
\newcommand{\href}[1]{(H\ref{#1})}
\renewcommand{\phi}{\varphi}
\newcommand{\scalar}[3][]{\left(#2\mid#3\right)_{#1}}
\DeclareMathOperator{\supp}{supp}
\DeclareMathOperator{\id}{id}
\DeclareMathOperator{\diver}{div}
\newcommand{\e}{\mathrm{e}}
\DeclareMathOperator{\Capa}{Cap}
\DeclareMathOperator{\sgn}{sgn}
\DeclareMathOperator{\Artanh}{Artanh}
\numberwithin{equation}{section}
\begin{document}

\title{Isometric Lattice Homomorphisms Between Sobolev Spaces}
\author{Markus Biegert}
\address{Markus Biegert\\Institute of Applied Analysis\\University of Ulm\\89069 Ulm\\Germany}
\email{markus.biegert@uni-ulm.de}
\author{Robin Nittka}
\address{Robin Nittka\\Institute of Applied Analysis\\University of Ulm\\89069 Ulm\\Germany}
\email{robin.nittka@uni-ulm.de}
\date{August 28, 2009}
\keywords{Weighted Composition Operator, Isometry, Sobolev Space, Lattice Homomorphism, Congruence, Rigid Motion,
	Representation, p-Laplace}
\subjclass[2000]{Primary 47B33; Secondary 46B04, 46E35}


\begin{abstract}
	Given bounded domains $\Omega_1$ and $\Omega_2$ in $\mathds{R}^N$
	and an isometry $T$ from $W^{1,p}(\Omega_1)$ to $W^{1,p}(\Omega_2)$,
	we give sufficient conditions ensuring that $T$ corresponds to a rigid
	motion of the space, i.e., $Tu = \pm (u \circ \xi)$ for an isometry $\xi$,
	and that the domains are congruent. More general versions of the involved
	results are obtained along the way.
\end{abstract}

\maketitle

\section{Introduction}

Given two function spaces $F(X)$ and $F(Y)$ on carrier sets $X$ and $Y$, respectively,
it might happen that for a mapping $\xi$ from $Y$ to $X$ the composition operator
$T_\xi u \coloneqq u \circ \xi$, or more generally the \emph{weighted composition operator}
$T_{g,\xi}u \coloneqq g \cdot (u \circ \xi)$, where $g$ is a scalar-valued function,
defines a mapping from $F(X)$ to $F(Y)$.
For example, if $X$ and $Y$ are compact topological spaces, then $T_{g,\xi}$ maps $\mathrm{C}(X)$ to
$\mathrm{C}(Y)$ whenever $g$ and $\xi$ are continuous.
Often such operators $T_{g,\xi}$ have nice properties. For example, $T_{g,\xi}$ might be isometric
or a lattice homomorphism if $F(X)$ and $F(Y)$ are normed spaces or lattices.

An old question is to find properties that distinguish weighted composition operators
among all operators from $F(X)$ to $F(Y)$. For example the famous Banach-Stone
theorem (\cite[\S XI.4]{Ban87} and \cite[Theorem~83]{Sto37}) states that
for compact spaces $X$ and $Y$ each surjective linear isometry from $\mathrm{C}(X)$ to
$\mathrm{C}(Y)$ is a weighted composition operator $T_{g,\xi}$ with $\xi$ a homeomorphism from $Y$ to $X$.
Note that this says in particular that all information about $X$ as a topological space is
already encoded into the Banach space structure of $\mathrm{C}(X)$.

Another result in this direction due to Lamperti~\cite{Lam58} says that every
linear isometry of $L^p(0,1)$ into itself, $p \neq 2$, is a weighted composition operator.
Isometries on Orlicz-like spaces were considered by
John Lamperti~\cite{Lam58}, and reflexive Orlicz spaces subsequently by
G\"unter Lumer~\cite{Lum63}.

In 2006 Geoff Diestel and Alexander Koldobsky~\cite{DK06} investigated
isometries on the Sobolev space $W^{1,p}(\Omega)$, which was continued in
Goeff Diestel's subsequent article~\cite{Die}. By identifying $W^{1,p}(\Omega)$
with a subspace of an $L^p$-space and thus reducing the situation to the
setting of Lamperti's theorem, the authors were able to prove
that under additional assumptions all isometries are of the form $\pm T_\xi$
with a rigid motion $\xi$. But only few rigid motions
induce isometries in this space since the norm under consideration is
not invariant under rotations. Moreover, the reduction to Lamperti's theorem
allows to treat only the case $p \neq 2$.

Sergei Vodop'janov and Vladimir Gol'd\v{s}te\u{\i}n~\cite{VG75} showed in 1975
that an order isomorphism between the Sobolev spaces $W^{1,N}(\Omega_1)$ and
$W^{1,N}(\Omega_2)$ for domains $\Omega_1$ and $\Omega_2$ in $\mathds{R}^N$
is a composition operator if it satisfies several additional quite natural order
theoretic assumptions.

In this article we examine isometries between Sobolev spaces equipped with
a commonly used rotation invariant norm.
We can show for $p > 2$ and for $p=2$ and $N \ge 2$ that every isometry $T$ from
$W^{1,p}(\Omega_1)$ to $W^{1,p}(\Omega_2)$, where $\Omega_1$ and $\Omega_2$
are bounded open subsets of $\mathds{R}^N$,
is a weighted composition operator, whenever $W^{1,p}_0(\Omega_2) \subset TW^{1,p}_0(\Omega_1)$
and $T$ satisfies some mild additional order theoretical conditions.
The choice of the Sobolev norm does not allow
to reduce the question to the corresponding one for $L^p$-spaces
as in~\cite{DK06}, and the order assumptions are by far not strong enough
to reduce the proof to the one in~\cite{VG75}.
Instead, we use quite different techniques, based on the first author's
characterization of lattice homomorphisms between Sobolev spaces~\cite{Bie08f}.
Moreover, we can show that under slightly stronger assumptions the domains
$\Omega_1$ and $\Omega_2$ are congruent.

For most of the results we do not have to assume that $T$ is isometric,
but rather can (and do) admit larger class of operators that in some sense commute with the
$p$-Laplacian, see~\href{ass:pLaplace} in Section~\ref{sec:intertwining}.
We show in Examples~\ref{ex:p2N1} and~\ref{ex:largeN1}
that the class of operators under consideration does not only contain isometries.
We mention that a condition such as~\href{ass:pLaplace} occurs naturally
if one considers isometries of $L^2$ that commute with the Laplacian~\cite{Are02}.

The article is organized as follows.
In Section~\ref{sec:pre} we introduce some notation, which will be used freely
throughout the article, and prove some auxiliary results regarding the norm
of $W^{1,p}(\Omega)$.
The short Section~\ref{sec:iso} is devoted to the study of isometries of $W^{1,p}$-spaces.
More precisely, we show that for $p \neq 2$ every order bounded isometry is a weighted
composition operator.
In Section~\ref{sec:intertwining} we consider an operator $T$
from $W^{1,p}(\Omega_1)$ to $W^{1,p}_{\mathrm{loc}}(\Omega_2)$ such that
\[
	Tu = g \cdot (u \circ \xi)
\]
for all $u \in W^{1,p}(\Omega_1)$,
\begin{align*}
	& \int_{\Omega_2} |Tu|^{p-2} Tu \, Tv + \int_{\Omega_2} |\nabla(Tu)|^{p-2} \nabla(Tu) \nabla(Tv) \\
		& \quad = \int_{\Omega_1} |u|^{p-2} u v + \int_{\Omega_1} |\nabla u|^{p-2} \nabla u \nabla v
\end{align*}
for all $u \in W^{1,p}(\Omega_1)$ and $v \in W^{1,p}_0(\Omega_1)$ satisfying $Tv \in W^{1,p}_c(\Omega_2)$,
and
\[
	W^{1,p}_0(\Omega_2) \subset TW^{1,p}_0(\Omega_1).
\]
We show that locally $\xi$ is a rigid motion, i.e., a composition of rotation and translation, and $g$
is locally constant with $|g| \equiv 1$.
The first two of the three conditions are fulfilled if $T$ is an isometric order isomorphism
from $W^{1,p}(\Omega_1)$ to $W^{1,p}(\Omega_2)$. This is summarized in Theorem~\ref{thm:sumrep}.
In Section~\ref{sec:cong} we show that under weak additional assumptions the sets
$\Omega_1$ and $\Omega_2$ are congruent, where $\xi$ realizes this congruence.
The two main results of Section~\ref{sec:cong} are Theorems~\ref{thm:conn} and~\ref{thm:W1p0eq}.
We point out that we have the minimal possible regularity assumptions on $\Omega_1$ and $\Omega_2$
in Theorem~\ref{thm:W1p0eq}, compare Remark~\ref{rem:regcap}.
Finally, we collect some sample applications of our results in Section~\ref{sec:app}.

\section{Preliminaries and Notation}\label{sec:pre}

Let $\Omega$ be an open bounded subset of $\mathds{R}^N$,
and let $p$ be in $[1,\infty)$. We write $W^{1,p}(\Omega)$ for the Sobolev space
of all real-valued functions in $L^p(\Omega)$ such that the first order distributional derivatives are again in
$L^p(\Omega)$, and equip it with the norm
\[
	\|u\|_{W^{1,p}(\Omega)}^p \coloneqq \int_\Omega |u|^p + \int_\Omega |\nabla u|^p,
\]
where $|\cdot|$ refers to the absolute value in $\mathds{R}$ and the euclidean norm
in $\mathds{R}^N$, respectively.
We write $W^{1,p}_0(\Omega)$ for the closure of the test functions
$\mathcal{D}(\Omega) \coloneqq \mathrm{C}^\infty_c(\Omega)$ in $W^{1,p}(\Omega)$,
which is the same as the completion of $W^{1,p}_c(\Omega)$, the subspace of functions in $W^{1,p}(\Omega)$
with compact support in $\Omega$.

We use frequently the notions of the $p$-capacity $\Capa_p(E)$ of a set
$E \subset \mathds{R}^N$ and $p$-quasi continuity of a function $u$ on $\Omega$.
Definitions and the relevant results can be found for example in~\cite[\S 2.1]{MZ97}
and~\cite{Bie05,Bie09b}.
We call a set $p$-polar if its $p$-capacity is zero,
and we say that a property holds $p$-quasi everywhere
when it holds outside a $p$-polar set.

The $p$-capacity is a much finer notion than Lebesgue measure. In fact,
every $p$-polar set is a Lebesgue null set.
Moreover, it fits nicely into the theory of Sobolev spaces, so that in many respects
properties of $W^{1,p}(\Omega)$ can be formulated in
terms of the capacity. This is the reason why it appears here in a natural way.

We recall some basic facts that will be used later on without further indication.
They can be found in the aforementioned references.
Every function in $W^{1,p}(\Omega)$ admits a $p$-quasi continuous representative.
If a $p$-quasi continuous function is changed on a $p$-polar set,
it remains $p$-quasi continuous. Moreover,
two $p$-quasi continuous functions that coincide almost everywhere
do in fact coincide $p$-quasi everywhere.
Thus the $p$-quasi continuous representative is unique up to $p$-polar sets.

Note that even though formally the statements in~\cite{MZ97} are restricted to $p \le N$,
the results we use here remain true for $p > N$.
In fact, to a large extent these results become trivial in that situation
since every $p$-polar set is empty and every $p$-quasi continuous
function is continuous if $p > N$.

\smallskip

In the sequel, we need the following simple lemma. Since we could
not find it in the literature, for the sake of completeness we provide a proof here.
\begin{lemma}\label{lem:capconn}
	Let $\Omega \subset \mathds{R}^N$ be open and connected,
	and let $P$ a relatively closed subset of $\Omega$
	with $(N-1)$-dimensional Hausdorff measure zero.
	Then $\Omega \setminus P$ is connected.
\end{lemma}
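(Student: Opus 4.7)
My strategy is to argue by contradiction: suppose $\Omega\setminus P$ is disconnected, so $\Omega\setminus P = U\sqcup V$ with $U,V$ open in $\Omega\setminus P$ (hence open in $\mathbb{R}^N$, since $P$ is relatively closed in $\Omega$) and both nonempty. I pick $x\in U$ and $y\in V$. Because $\Omega$ is open and connected, it is polygonally connected, so I can join $x$ to $y$ by a polygonal path $\gamma_0\colon[0,1]\to\Omega$ consisting of finitely many line segments $S_1,\dots,S_k$. The path may cross $P$; my goal is to show that almost every small translate of $\gamma_0$ avoids $P$ entirely while its endpoints stay in $U$ and $V$ respectively, contradicting the separation.

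The key analytic input is this consequence of the Hausdorff measure hypothesis: for any unit vector $e\in\mathbb{R}^N$, the orthogonal projection $\pi_e\colon\mathbb{R}^N\to e^\perp$ is $1$-Lipschitz, so $\mathcal{H}^{N-1}(\pi_e(P))\le \mathcal{H}^{N-1}(P)=0$. Hence the set $\pi_e^{-1}(\pi_e(P))$, which is exactly the union of all lines parallel to $e$ that meet $P$, has $N$-dimensional Lebesgue measure zero. Applied to a segment $S_i$ with direction $e_i$: a translate $S_i+z$ meets $P$ only if the full line containing it meets $P$, so the set of $z\in\mathbb{R}^N$ for which $S_i+z$ hits $P$ lies in a Lebesgue-null set. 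Taking the union over $i=1,\dots,k$, the set of ``bad'' translations $z$ for which $\gamma_0+z$ meets $P$ still has Lebesgue measure zero.

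I then choose $\varepsilon>0$ small enough that $B(x,\varepsilon)\subset U$, $B(y,\varepsilon)\subset V$, and $\gamma_0([0,1])+\overline{B(0,\varepsilon)}\subset\Omega$ (possible by compactness of $\gamma_0([0,1])$ in the open set $\Omega$). Since $B(0,\varepsilon)$ has positive Lebesgue measure and the bad set has measure zero, I can pick $z\in B(0,\varepsilon)$ with $\gamma_0+z$ disjoint from $P$. This translated path lies in $\Omega\setminus P=U\cup V$, starts at $x+z\in U$, and ends at $y+z\in V$; being connected, it must lie entirely in one of $U$ or $V$, contradiction.

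The main technical point I expect to need to argue carefully is the projection estimate and its consequence that the ``bad'' set of translations is Lebesgue-null; once that is in place, the reduction to polygonal paths and the contradiction are routine. No part of the argument uses more than elementary properties of Hausdorff measure and Lipschitz maps, which is why I suspect the lemma is considered folklore and hard to locate in the literature.
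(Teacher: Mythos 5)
Your proof is correct, but it takes a genuinely different route from the paper's. The paper argues functional-analytically: if $\Omega\setminus P = V_1\cup V_2$ were a disconnection, then $u\coloneqq\setone_{V_1}$ lies in $W^{1,p}(\Omega\setminus P)$ with $\nabla u=0$, and the removability theorem for relatively closed sets of vanishing $(N-1)$-dimensional Hausdorff measure (cited from Maz'ya--Poborchi, \S 1.2.5) upgrades this to $u\in W^{1,p}(\Omega)$ with $\nabla u=0$ a.e., whence $u$ is a.e.\ constant on the connected set $\Omega$ --- a contradiction. That proof is three lines long but leans on a nontrivial Sobolev-space removability result, which is natural given the paper's surrounding machinery. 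Your argument instead is purely geometric: polygonal connectivity of open connected sets, the fact that the $1$-Lipschitz projection $\pi_e$ cannot increase $\mathcal{H}^{N-1}$, and Fubini to conclude that the cylinder $\pi_e^{-1}(\pi_e(P))$ is Lebesgue-null, so that a generic small translate of the polygonal path misses $P$ while keeping its endpoints in the two alleged components. All the steps check out (in particular, the bad set of translations for a segment $S_i$ with direction $e_i$ is contained in $\pi_{e_i}^{-1}\bigl(\pi_{e_i}(P)-c_i\bigr)$ for the single point $c_i=\pi_{e_i}(S_i)$, which is indeed null, and the finite union over segments stays null). What your approach buys is self-containedness --- only elementary properties of Hausdorff measure and Lipschitz maps --- and a more transparent geometric explanation of why codimension greater than one cannot disconnect; what the paper's approach buys is brevity and thematic coherence with the Sobolev-space techniques used throughout. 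Both proofs degenerate gracefully when $N=1$, where the hypothesis forces $P=\emptyset$.
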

\begin{proof}
	Assume $\Omega \setminus P$ to be disconnected, i.e., assume
	that there exist disjoint, non-empty, open sets $V_1$ and $V_2$ such that
	$\Omega \setminus P = V_1 \cup V_2$. Then $u \coloneqq \setone_{V_1}$
	is in $W^{1,p}(\Omega \setminus P)$ with $\nabla u = 0$.
	But since the $(N-1)$-dimensional Hausdorff measure of $P$
	is zero, $u$ is in $W^{1,p}(\Omega)$
	with $\nabla u = 0$ almost everywhere on $\Omega$~\cite[\S 1.2.5]{MP97}.
	Since $\Omega$ is connected, this implies that $u$ agrees almost everywhere
	on $\Omega$ with a constant function, which is a contradiction.
\end{proof}

The remainder of this section contains some auxiliary calculations regarding
the norm of $W^{1,p}(\Omega)$.
More precisely, we identify its first and second G\^ateaux derivative.

\begin{definition}\label{def:forms}
	Let $p\in(1,\infty)$ and $u,v,w \in W^{1,p}(\Omega)$. Define
	\[
		\mathfrak{a}_{p,\Omega}(u,v)
			\coloneqq \int_\Omega |u|^{p-2}uv + \int_\Omega |\nabla u|^{p-2}\nabla u\nabla v.
	\]
	For $p > 2$, we set
	\begin{align*}
		\mathfrak{b}_{p,\Omega}(u,v,w)
			& \coloneqq (p-1)\int_{\Omega} |u|^{p-2}vw
				+ (p-2) \int_\Omega |\nabla u|^{p-4} \scalar{\nabla u}{\nabla v} \scalar{\nabla u}{\nabla w} \\
				& \qquad + \int_\Omega |\nabla u|^{p-2} \nabla v \nabla w.
	\end{align*}
	Here, as usual, the products in the integrals are understood to equal zero where one of their factors is zero.
\end{definition}
 
\begin{remark}
	The expressions in Definition~\ref{def:forms} are also well-defined if one of the functions
	has compact support and the others lie only in $W^{1,p}_{\mathrm{loc}}(\Omega)$.
	We will use these expressions also in those cases, see for example Remark~\ref{rem:form2}.
\end{remark}

Note that $\mathfrak{a}_{p,\Omega}$ appears in the weak formulation of the differential equation
\begin{equation}\label{eq:pLaplace}
	\Delta_p w = |w|^{p-2}w,
\end{equation}
where $\Delta_pw\coloneqq \diver(|\nabla w|^{p-2}\nabla w)$ denotes
the $p$-Laplacian. More precisely, it is common to call a function $u$ in
$W^{1,p}_{\mathrm{loc}}(\Omega)$ a \emph{weak solution of~\eqref{eq:pLaplace}}
if
\begin{equation}\label{eq:pLaplaceWeak}
	\mathfrak{a}_{p,\Omega}(u,\phi) = 0
\end{equation}
holds for all $\phi$ in $\mathcal{D}(\Omega)$. If $u$ is in $W^{1,p}(\Omega)$,
then~\eqref{eq:pLaplaceWeak} holds even for all $\phi$ in $W^{1,p}_0(\Omega)$.

\begin{proposition}\label{prop:norm}
	Fix $p\in(1,\infty)$ and $u,v\in W^{1,p}(\Omega)$. Then
	\[
		\lim_{s \to 0} \frac{\|u+sv\|^p_{W^{1,p}(\Omega)}-\|u\|^p_{W^{1,p}(\Omega)}}{s}
			= p \, \mathfrak{a}_{p,\Omega}(u,v)
	\]
\end{proposition}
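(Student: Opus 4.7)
The plan is to differentiate under the integral sign, splitting the norm into its two natural parts and applying the dominated convergence theorem to each. Write
\[
	\frac{\|u+sv\|^p_{W^{1,p}(\Omega)} - \|u\|^p_{W^{1,p}(\Omega)}}{s}
		= \int_\Omega \frac{|u+sv|^p - |u|^p}{s} + \int_\Omega \frac{|\nabla u + s\nabla v|^p - |\nabla u|^p}{s}.
\]
I will show both integrals converge to the corresponding terms of $p\,\mathfrak{a}_{p,\Omega}(u,v)$ separately.

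For the first integral, the scalar function $f(t) := |t|^p$ is $\mathrm{C}^1$ on $\mathds{R}$ since $p > 1$, with $f'(t) = p|t|^{p-2}t$ (interpreted as $0$ at $t = 0$ with the usual convention). Hence for every $x \in \Omega$, the integrand converges pointwise to $p|u(x)|^{p-2}u(x)v(x)$ as $s \to 0$. For the uniform bound, the mean value theorem yields some $\theta = \theta(s,x)$ between $0$ and $s$ with
\[
	\left|\frac{|u+sv|^p - |u|^p}{s}\right|
		= p \, |u + \theta v|^{p-1} |v|
		\le p (|u| + |v|)^{p-1} |v|
\]
for $|s| \le 1$. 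Using $(a+b)^{p-1} \le 2^{p-1}(a^{p-1}+b^{p-1})$ and Young's inequality, the right-hand side is dominated by a constant multiple of $|u|^p + |v|^p$, which is integrable since $u, v \in L^p(\Omega)$. The dominated convergence theorem gives convergence to $p\int_\Omega |u|^{p-2}uv$.

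For the second integral, the same argument applies verbatim to the $\mathrm{C}^1$ function $g(\xi) := |\xi|^p$ on $\mathds{R}^N$, which has $\nabla g(\xi) = p|\xi|^{p-2}\xi$. Pointwise, the integrand converges to $p|\nabla u|^{p-2}\nabla u \cdot \nabla v$, and the mean value bound
\[
	\left|\frac{|\nabla u + s\nabla v|^p - |\nabla u|^p}{s}\right|
		\le p (|\nabla u| + |\nabla v|)^{p-1} |\nabla v|
\]
for $|s|\le 1$ yields an integrable majorant in the same way. Summing the two limits gives exactly $p\,\mathfrak{a}_{p,\Omega}(u,v)$.

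There is no real obstacle; the only subtle point is the behavior of $|t|^{p-2}t$ near $t = 0$ for $1 < p < 2$, but this is handled automatically by $\mathrm{C}^1$ smoothness of $f$ and $g$ together with the zero-product convention adopted in Definition~\ref{def:forms}.
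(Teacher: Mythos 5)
Your proof is correct and follows essentially the same route as the paper's: split the norm into the $L^p$ and gradient parts, identify the pointwise limit via the derivative of $t \mapsto |t|^p$ (resp.\ $\xi \mapsto |\xi|^p$), bound the difference quotient by $p(|u|+|v|)^{p-1}|v|$ using the mean value theorem, and conclude by dominated convergence. The only cosmetic difference is that you justify integrability of the majorant via Young's inequality where the paper invokes H\"older's inequality; both are equally valid.
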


\begin{proof}
	For $x$ and $y$ in $\mathds{R}^N$ define
	\[
		g(s;x,y) \coloneqq |x+sy|^p.
	\]
	It is easy to check that $g$ is continuously differentiable with respect to $s$
	and
	\[
		g'(s;x,y) = p \left|x+sy\right|^{p-2} \scalar{x+sy}{y}.
	\]
	Hence
	\begin{align*}
		\lim_{s \to 0} \frac{|\nabla u+s\nabla v|^p - |\nabla u|^p}{s}
			& = \lim_{s \to 0} \frac{g(s;\nabla u,\nabla v)-g(0;\nabla u,\nabla v)}{s-0} \\
			& = g'(0;\nabla u,\nabla v)
			= p \left|\nabla u\right|^{p-2} \nabla u \cdot \nabla v,
	\end{align*}
	where the limit is understood pointwise.
	For $s \in (-1,1)$, $s \neq 0$, it follows from the mean value theorem that
	\begin{align*}
		\biggl| \frac{|\nabla u+s\nabla v|^p - |\nabla u|^p}{s} \biggr|
			& = \bigl| g'(\xi;\nabla u,\nabla v) \bigr|
			\le p \left| \nabla u + \xi \nabla v \right|^{p-1} \left| \nabla v \right| \\
			& \le p \bigl( |\nabla u| + |\nabla v| \bigr)^{p-1} |\nabla v|,
	\end{align*}
	where $\xi \coloneqq \xi(s,\nabla u,\nabla v)$ takes values in $(-1,1)$.
	Since the right hand side is in $L^1(\Omega)$ due to H\"older's inequality,
	it follows from Lebesgue's dominated convergence theorem that
	\begin{equation}\label{prop:norm:nablau}
		\lim_{s \to 0} \int_\Omega \frac{|\nabla u+s\nabla v|^p - |\nabla u|^p}{s}
			= p \int_\Omega \left|\nabla u\right|^{p-2} \nabla u \nabla v.
	\end{equation}
	Replacing in the above argument $\nabla u$ by $u$ and $\nabla v$ by $v$,
	we also obtain
	\begin{equation}\label{prop:norm:u}
		\lim_{s \to 0} \int_\Omega \frac{|u+sv|^p - |u|^p}{s} = p \int_\Omega |u|^{p-2} uv.
	\end{equation}
	Now the claim follows by adding~\eqref{prop:norm:nablau} and~\eqref{prop:norm:u}.
\end{proof}

\begin{proposition}\label{prop:form}
	Fix $p\in(2,\infty)$ and $u,v,w\in W^{1,p}(\Omega)$. Then
	\[
		\lim_{s \to 0} \frac{\mathfrak{a}_{p,\Omega}(u+sv,w)-\mathfrak{a}_{p,\Omega}(u,w)}{s}
			\to \mathfrak{b}_{p,\Omega}(u,v,w)
	\]
\end{proposition}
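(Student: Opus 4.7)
The strategy mirrors that of Proposition~\ref{prop:norm}: split the difference quotient of $\mathfrak{a}_{p,\Omega}(u+sv,w) - \mathfrak{a}_{p,\Omega}(u,w)$ into a zero-order part and a gradient part, identify the pointwise $s$-derivative at $s=0$, exhibit an $s$-uniform $L^1(\Omega)$-majorant, and conclude by Lebesgue's dominated convergence theorem. For the zero-order part I would introduce
\[
	h(s;\alpha,\beta,\gamma) \coloneqq |\alpha + s\beta|^{p-2}(\alpha+s\beta)\gamma, \qquad \alpha,\beta,\gamma \in \mathds{R}.
\]
Because $p>2$, the real function $t \mapsto |t|^{p-2}t$ is continuously differentiable with derivative $(p-1)|t|^{p-2}$, so $h$ is $\mathrm{C}^1$ in $s$ with $\partial_s h(s;\alpha,\beta,\gamma) = (p-1)|\alpha+s\beta|^{p-2}\beta\gamma$. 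Setting $s=0$ and inserting $(\alpha,\beta,\gamma) = (u,v,w)$ reproduces, after integration over $\Omega$, the first summand of $\mathfrak{b}_{p,\Omega}(u,v,w)$.

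For the gradient part I would set $G(s;x,y,z) \coloneqq |x+sy|^{p-2}\scalar{x+sy}{z}$ for $x,y,z \in \mathds{R}^N$. A direct computation, valid for $p>2$ and with the standard convention that the expression vanishes where $x+sy=0$, yields
\[
	\partial_s G(s;x,y,z) = (p-2)|x+sy|^{p-4}\scalar{x+sy}{y}\scalar{x+sy}{z} + |x+sy|^{p-2}\scalar{y}{z}.
\]
Evaluating at $s=0$ and integrating over $\Omega$ with $(x,y,z) = (\nabla u, \nabla v, \nabla w)$ gives exactly the remaining two summands of $\mathfrak{b}_{p,\Omega}(u,v,w)$.

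It then remains to produce $s$-uniform $L^1(\Omega)$-majorants. Applying Cauchy--Schwarz twice to the singular summand absorbs two factors of $|x+sy|$ and restores the exponent to $p-2$, giving
\[
	|\partial_s G(s;x,y,z)| \le (p-1)|x+sy|^{p-2}|y||z| \le (p-1)\bigl(|x|+|y|\bigr)^{p-2}|y||z|
\]
for $|s|\le 1$, with the analogous bound for $\partial_s h$. The fundamental theorem of calculus transfers these into bounds on the corresponding difference quotients. Plugging in $(u,v,w)$ and $(\nabla u, \nabla v, \nabla w)$, the generalized H\"older inequality with exponents $\bigl(p/(p-2),p,p\bigr)$ shows that both majorants lie in $L^1(\Omega)$ since $u,v,w \in W^{1,p}(\Omega)$. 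The only subtlety, and the place where the hypothesis $p>2$ enters essentially, is the interpretation and control of the singular prefactor $|x+sy|^{p-4}$ in the range $2<p<4$; the Cauchy--Schwarz reduction just mentioned disposes of this. Dominated convergence then completes the proof.
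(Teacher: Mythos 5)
Your proposal is correct and follows essentially the same route as the paper: identify the pointwise $s$-derivative of $|x+sy|^{p-2}(x+sy)$ (the paper differentiates the vector-valued map and pairs with $\nabla w$ afterwards, while you fold $w$ into the integrand and use a three-exponent H\"older estimate, which is an immaterial reformulation), obtain an $s$-uniform majorant of the form $(p-1)(|\nabla u|+|\nabla v|)^{p-2}|\nabla v|$ via the mean value theorem, and conclude by dominated convergence. Your explicit Cauchy--Schwarz treatment of the $|x+sy|^{p-4}$ factor for $2<p<4$ is a welcome clarification of a point the paper leaves to the stated convention on vanishing factors.
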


\begin{proof}
	For $x$ and $y$ in $\mathds{R}^N$ define
	\[
		g(s;x,y) \coloneqq |x+sy|^{p-2} (x+sy).
	\]
	It is easy to check that $g$ is continuously differentiable with respect to $s$
	and
	\[
		g'(s;x,y) = (p-2) \left|x+sy\right|^{p-4} \scalar{x+sy}{y} (x+sy) + \left|x+sy\right|^{p-2} y.
	\]
	Hence
	\begin{align*}
		& \lim_{s \to 0} \frac{|\nabla u+s\nabla v|^{p-2}(\nabla u + s\nabla v) - |\nabla u|^{p-2} \nabla u}{s} \\
			& \qquad = \lim_{s \to 0} \frac{g(s;\nabla u,\nabla v)-g(0;\nabla u,\nabla v)}{s-0}
			= g'(0;\nabla u,\nabla v) \\
			& \qquad = (p-2) \left|\nabla u\right|^{p-4} \scalar{\nabla u}{\nabla v} \nabla u + \left|\nabla u\right|^{p-2} \nabla v.
	\end{align*}
	where the limit is understood pointwise.
	For $s \in (-1,1)$, $s \neq 0$, it follows from the mean value theorem that
	\begin{align*}
		& \frac{|\nabla u+s\nabla v|^{p-2}(\nabla u + s\nabla v) - |\nabla u|^{p-2} \nabla u}{s}
			= \bigl| g'(\xi;\nabla u,\nabla v) \bigr| \\
			& \qquad \le (p-1) \bigl| \nabla u + \xi \nabla v \bigr|^{p-2} \left|\nabla v\right|
			\le (p-1) \bigl( |\nabla u| + |\nabla v| \bigr)^{p-2} \left|\nabla v\right|
	\end{align*}
	where $\xi \coloneqq \xi(s,\nabla u,\nabla v)$ takes values in $(-1,1)$.
	Since the right hand side is in $L^{p'}(\Omega)$ due to H\"older's inequality,
	it follows from Lebesgue's dominated convergence theorem that
	\begin{equation}\label{prop:form:nablau}
		\begin{aligned}
			& \lim_{s \to 0} \int_\Omega \frac{|\nabla u+s\nabla v|^{p-2}(\nabla u + s\nabla v) - |\nabla u|^{p-2} \nabla u}{s} \nabla w \\
				& \qquad = (p-2) \int_\Omega \left|\nabla u\right|^{p-4} \scalar{\nabla u}{\nabla v} \nabla u \nabla w
					+ \int_\Omega \left|\nabla u\right|^{p-2} \nabla v \nabla w.
		\end{aligned}
	\end{equation}
	Replacing in the above argument $\nabla u$ by $u$, $\nabla v$ by $v$, and $\nabla w$ by $w$, and taking
	into account that $\scalar{u}{v}u = |u|^2 v$, we also obtain
	\begin{equation}\label{prop:form:u}
		\lim_{s \to 0} \int_\Omega \frac{|u+sv|^{p-2}(u+sv) - |u|^{p-2}u}{s} w
			= (p-1) \int_\Omega |u|^{p-2} vw.
	\end{equation}
	Now the claim follows by adding~\eqref{prop:form:nablau} and~\eqref{prop:form:u}.
\end{proof}

\begin{remark}\label{rem:form2}
	Proposition~\ref{prop:form} remains true for $u, v \in W^{1,p}_{\mathrm{loc}}(\Omega)$
	if $w$ has compact support in $\Omega$, with exactly the same proof.
\end{remark}

\begin{corollary}\label{cor:intertwine}
	Let $p \in [1,\infty)$, and let $T$ be an isometry from $W^{1,p}(\Omega_1)$ to $W^{1,p}(\Omega_2)$.
	Then
	\[
		\mathfrak{a}_{p,\Omega_1}(u,v) = \mathfrak{a}_{p,\Omega_2}(Tu,Tv) \text{ for all } u,v \in W^{1,p}(\Omega_1).
	\]
	In particular, if $u \in W^{1,p}(\Omega_1)$ is a weak solution of~\eqref{eq:pLaplace} on
	$\Omega_1$, then $Tu$ is a weak solution of~\eqref{eq:pLaplace} on $\Omega_2$
	provided that $\mathcal{D}(\Omega_2) \subset TW^{1,p}_0(\Omega_1)$.
\end{corollary}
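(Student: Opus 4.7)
The plan is to read off the first identity as the derivative at $s=0$ of the isometry relation $\|T(u+sv)\|_{W^{1,p}(\Omega_2)}=\|u+sv\|_{W^{1,p}(\Omega_1)}$. Since $T$ is a linear isometry (the range $p\in[1,\infty)$ is to be read as $p\in(1,\infty)$, as Definition~\ref{def:forms} requires $p>1$), linearity gives $T(u+sv)=Tu+sTv$, and raising both sides of the isometry relation to the $p$-th power we obtain
\[
\|u+sv\|_{W^{1,p}(\Omega_1)}^{p}-\|u\|_{W^{1,p}(\Omega_1)}^{p}=\|Tu+sTv\|_{W^{1,p}(\Omega_2)}^{p}-\|Tu\|_{W^{1,p}(\Omega_2)}^{p}
\]
for every $s\in\mathds{R}$. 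Dividing by $s$ and letting $s\to 0$, Proposition~\ref{prop:norm} applied once on $\Omega_1$ and once on $\Omega_2$ yields $p\,\mathfrak{a}_{p,\Omega_1}(u,v)=p\,\mathfrak{a}_{p,\Omega_2}(Tu,Tv)$, which is the desired identity after cancelling the factor $p$.

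For the second assertion, suppose $u\in W^{1,p}(\Omega_1)$ is a weak solution of~\eqref{eq:pLaplace}, so that $\mathfrak{a}_{p,\Omega_1}(u,\phi)=0$ for all $\phi\in W^{1,p}_0(\Omega_1)$. Given any $\psi\in\mathcal{D}(\Omega_2)$, the hypothesis $\mathcal{D}(\Omega_2)\subset TW^{1,p}_0(\Omega_1)$ furnishes some $\phi\in W^{1,p}_0(\Omega_1)$ with $T\phi=\psi$. The first part of the corollary then gives
\[
\mathfrak{a}_{p,\Omega_2}(Tu,\psi)=\mathfrak{a}_{p,\Omega_2}(Tu,T\phi)=\mathfrak{a}_{p,\Omega_1}(u,\phi)=0,
\]
so $Tu$ satisfies~\eqref{eq:pLaplaceWeak} against every test function on $\Omega_2$ and is thus a weak solution of~\eqref{eq:pLaplace} on $\Omega_2$.

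The proof is essentially a one-line differentiation argument once Proposition~\ref{prop:norm} is in place, so there is no real obstacle. The only point that warrants a brief mention is that "isometry" is tacitly understood to mean \emph{linear} isometry; were one to drop linearity, the first step would first require invoking the Mazur--Ulam theorem to obtain affineness, together with $T(0)=0$ (which, if not part of the definition, can in turn be arranged by considering the map $u\mapsto Tu-T(0)$).
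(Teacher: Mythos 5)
Your proof is correct and is precisely the argument the paper intends: the corollary is stated without proof as an immediate consequence of Proposition~\ref{prop:norm}, obtained by differentiating the identity $\|u+sv\|_{W^{1,p}(\Omega_1)}^p=\|Tu+sTv\|_{W^{1,p}(\Omega_2)}^p$ at $s=0$, with the second assertion following by testing against $\psi=T\phi$ exactly as you do. Your side remarks are also well taken: Proposition~\ref{prop:norm} indeed requires $p>1$, and the paper itself addresses the tacit linearity assumption (via V\"ais\"al\"a's result for $p>1$) in the remark following Theorem~\ref{thm:disj}.
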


\section{Isometries between Sobolev Spaces}\label{sec:iso}

We will need the following vector-valued version of Clarkson's famous inequality,
which is an easy consequence of~\cite[Corollary~2.5]{TK97} as explained in
the introduction of~\cite{Cho01}.

\begin{lemma}\label{lem:lamperti}
	Let $f,g \in L^p(\Omega; \mathds{R}^N)$. Then
	\[
		\|f+g\|_p^p + \|f-g\|_p^p \ge 2\|f\|_p^p + 2\|g\|_p^p,
	\]
	if $p \ge 2$, whereas for $p \le 2$ the opposite inequality holds.
\end{lemma}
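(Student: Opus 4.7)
The plan is to reduce the lemma to a pointwise inequality for vectors in $\mathds{R}^N$ and then integrate. Specifically, I would first show that for every $x, y \in \mathds{R}^N$ and every $p \ge 2$,
\[
|x+y|^p + |x-y|^p \ge 2|x|^p + 2|y|^p,
\]
with the reverse inequality when $1 \le p \le 2$. Applying this inequality pointwise with $x = f(\omega)$ and $y = g(\omega)$ and integrating over $\Omega$ yields the lemma immediately, since $\|\cdot\|_p^p$ is precisely the integral of the $p$-th power of the Euclidean norm.

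For the pointwise inequality, I would start from the parallelogram identity in the Hilbert space $\mathds{R}^N$,
\[
|x+y|^2 + |x-y|^2 = 2|x|^2 + 2|y|^2,
\]
and set $A \coloneqq |x+y|^2$, $B \coloneqq |x-y|^2$, $C \coloneqq |x|^2$, $D \coloneqq |y|^2$, so that $A + B = 2C + 2D$. Writing $q \coloneqq p/2 \ge 1$, the claim reads $A^q + B^q \ge 2C^q + 2D^q$. By convexity of $t \mapsto t^q$ on $[0,\infty)$, Jensen's inequality gives
\[
A^q + B^q \ge 2\Bigl(\tfrac{A+B}{2}\Bigr)^q = 2(C+D)^q.
\]
The superadditivity estimate $(C+D)^q \ge C^q + D^q$ for $q \ge 1$ is elementary: the function $h(s) \coloneqq (C+s)^q - s^q$ satisfies $h(0) = C^q$ and $h'(s) = q\bigl((C+s)^{q-1} - s^{q-1}\bigr) \ge 0$, hence $h(D) \ge h(0)$. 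Chaining the two estimates yields $A^q + B^q \ge 2C^q + 2D^q$, which is what we wanted.

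For the regime $1 \le p \le 2$, i.e., $0 < q \le 1$, the map $t \mapsto t^q$ is concave and subadditive on $[0,\infty)$; both halves of the argument above reverse direction simultaneously (Jensen in the concave direction gives $A^q + B^q \le 2(C+D)^q$, and $h'(s) \le 0$ now implies $(C+D)^q \le C^q + D^q$), yielding $A^q + B^q \le 2C^q + 2D^q$. There is no substantive obstacle here; the only minor point requiring care is to check that both inequalities in the two-step chain reverse together in the subquadratic case, but this is immediate from the standard behaviour of the power functions $t \mapsto t^q$ on either side of $q = 1$.
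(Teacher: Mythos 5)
Your proof is correct. Note, however, that the paper does not actually prove this lemma: it simply cites it as a consequence of a general result on Clarkson-type inequalities for Bochner spaces (\cite[Corollary~2.5]{TK97}, as explained in the introduction of~\cite{Cho01}). Your argument is therefore a self-contained replacement for that citation, and it is the natural one: reduce to the pointwise inequality $|x+y|^p+|x-y|^p \ge 2|x|^p+2|y|^p$ in $\mathds{R}^N$ and integrate. The two-step chain --- Jensen applied to $t\mapsto t^{p/2}$ together with the parallelogram identity, followed by super-/subadditivity of $t \mapsto t^{p/2}$ --- is sound, and you correctly observe that both steps reverse simultaneously for $1\le p\le 2$ so the chain remains coherent. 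What your approach buys is transparency: it isolates exactly where the Hilbert-space structure of the target $\mathds{R}^N$ enters (the parallelogram law), which explains why the same exponent $p$ appears on both sides here, in contrast to the conjugate-exponent form of Clarkson's second inequality for general $L^p$. The only cosmetic point is the derivative computation $h'(s)=q\bigl((C+s)^{q-1}-s^{q-1}\bigr)$ at $s=0$ when $q<1$, where $s^{q-1}$ blows up; monotonicity on $(0,\infty)$ plus continuity at $0$ (or the standard normalization argument dividing by $(C+D)^q$) disposes of this.
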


From this estimate we can deduce that isometries of Sobolev spaces
respect the disjointness of sets.

\begin{theorem}\label{thm:disj}
	Let $\Omega_1,\Omega_2\subset\mathds{R}^N$ be open sets and let $p\in[1,\infty)$, $p\neq2$.
	If $T$ is a linear isometry from $W^{1,p}(\Omega_1)$ to $W^{1,p}(\Omega_2)$, then
	$T$ is disjointness preserving, i.e., for $u,v\in W^{1,p}(\Omega_2)$
	such that $|u| \wedge |v|=0$ we have $|Tu| \wedge |Tv|=0$.
\end{theorem}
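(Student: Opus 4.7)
The plan is to exploit the vector-valued Clarkson inequality of Lemma~\ref{lem:lamperti}, applied separately to the scalar function parts and to the gradient parts of the $W^{1,p}$-norm, together with its sharpness when $p \ne 2$.

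The first step is to observe that if $u,v \in W^{1,p}(\Omega_1)$ satisfy $|u| \wedge |v| = 0$, then not only are $u$ and $v$ supported on disjoint sets almost everywhere, but so are $\nabla u$ and $\nabla v$: the gradient of a Sobolev function vanishes almost everywhere on its zero set, so $\nabla u = 0$ a.e.\ on $\{v \ne 0\}$ and vice versa. Hence at almost every point one summand in each of the pairs $(u,v)$ and $(\nabla u,\nabla v)$ vanishes, which yields the trivial pointwise identity $|a+b|^p + |a-b|^p = 2|a|^p + 2|b|^p$ in both cases. Integrating and adding gives the equality
\[
    \|u+v\|_{W^{1,p}(\Omega_1)}^p + \|u-v\|_{W^{1,p}(\Omega_1)}^p
        = 2\|u\|_{W^{1,p}(\Omega_1)}^p + 2\|v\|_{W^{1,p}(\Omega_1)}^p.
\]

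Since $T$ is linear and isometric, the same identity holds with $u,v$ replaced by $Tu,Tv$ in $W^{1,p}(\Omega_2)$. Now I would apply Lemma~\ref{lem:lamperti} twice: to the scalar pair $(Tu,Tv)$ in $L^p(\Omega_2)$ and to the vector pair $(\nabla Tu,\nabla Tv)$ in $L^p(\Omega_2;\mathds{R}^N)$. Both estimates point in the same direction ($\ge$ for $p>2$, $\le$ for $p<2$), and their sum is precisely the $W^{1,p}$-identity just established. Because the sum is an equality, each of the two Clarkson inequalities must be tight individually.

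To conclude, I would use the fact that for $p \ne 2$ the scalar pointwise Clarkson inequality for real $a,b$ is strict unless $a=0$ or $b=0$; this is a short one-variable calculus check, e.g.\ by analyzing $g(t) \coloneqq (1+t)^p + (1-t)^p - 2 - 2t^p$ on $(0,1]$ and noting that $g(0) = g'(0) = 0$ while $g''$ has a definite sign on $(0,1)$ depending on whether $p>2$ or $p<2$. Integrated tightness in Lemma~\ref{lem:lamperti} for the scalar pair $(Tu,Tv)$ therefore forces $Tu(x)\,Tv(x) = 0$ for a.e.\ $x$, which is exactly $|Tu| \wedge |Tv| = 0$. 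The chief obstacle is not the overall structure of the argument but supplying this characterization of the pointwise equality case, which is not recorded in Lemma~\ref{lem:lamperti} as stated; once that is in hand, the scalar Clarkson piece and the $p \ne 2$ hypothesis do all the work, and the gradient piece is only needed to justify why the sum is tight.
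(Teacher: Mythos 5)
Your proof follows essentially the same route as the paper: disjointness of $u,v$ (hence also of $\nabla u,\nabla v$) gives the parallelogram-type equality in $W^{1,p}(\Omega_1)$, the isometry transfers it to $Tu,Tv$, Lemma~\ref{lem:lamperti} applied to the gradients forces the scalar Clarkson inequality for $(Tu,Tv)$ to be an equality, and the equality case then yields $Tu\cdot Tv=0$ almost everywhere. The paper simply cites Lamperti~\cite[Corollary~2.1]{Lam58} for that last equality-case characterization rather than proving it; note only that your sketched justification via a definite sign of $g''$ on $(0,1)$ fails for $2<p<3$ (where $g''$ changes sign), although the characterization itself is standard and can be obtained, e.g., from the power-mean chain $\bigl(\tfrac{|a+b|^p+|a-b|^p}{2}\bigr)^{1/p}\ge\bigl(|a|^2+|b|^2\bigr)^{1/2}\ge\bigl(|a|^p+|b|^p\bigr)^{1/p}$.
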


\begin{proof}
	Assume $p > 2$ and let
	$u$ and $v$ be in $W^{1,p}(\Omega_1)$ with $|u|\wedge|v|=0$.
	Then
	\begin{align*}
		 \|Tu\|_{W^{1,p}(\Omega_2)}^p + \|Tv\|_{W^{1,p}(\Omega_2)}^p 
			 &= \|u\|_{W^{1,p}(\Omega_1)}^p + \|v\|_{W^{1,p}(\Omega_1)}^p \\
			 &= \|u+v\|^p_{W^{1,p}(\Omega_1)} = \|Tu+Tv\|^p_{W^{1,p}(\Omega_2)}.
	\end{align*}
	Replacing $v$ by $-v$ and adding up the resulting two equalities we obtain
	\[
			\|Tu+Tv\|^p_{W^{1,p}(\Omega_2)}+\|Tu-Tv\|^p_{W^{1,p}(\Omega_2)}
				= 2\|Tu\|_{W^{1,p}(\Omega_2)}^p + 2\|Tv\|_{W^{1,p}(\Omega_2)}^p,
	\]
	Applying Lemma~\ref{lem:lamperti} to $\nabla(Tu)$ and $\nabla(Tv)$,
	by the definition of the norm this yields
	\begin{align*}
		\|Tu+Tv\|_{L^p(\Omega_2)}^p + \|Tu-Tv\|_{L^p(\Omega_2)}^p
			\le 2 \|Tu\|_{L^p(\Omega_2)}^p + 2\|Tv\|_{L^p(\Omega_2)}^p.
	\end{align*}
	By Clarkson's inequality~\cite[Corollary~2.1]{Lam58}
	this implies $Tu \cdot Tv = 0$ almost everywhere, which is an equivalent formulation
	of disjointness.

	The proof for the case $p < 2$ is similar.
\end{proof}

\begin{remark}
	Due to strict convexity, for $p > 1$
	the linearity condition in the previous theorem is (up to translation) automatically
	fulfilled, see~\cite{Vae03}.
\end{remark}

As a consequence of the last theorem we obtain a variant of Lamperti's theorem,
Theorem~\ref{thm:HomCoMu}.
Using one of the other representation theorems in~\cite{Bie08f}, we could also deduce
related, but slightly different results.

For this theorem we have to assume the operator to be order bounded.
This is automatic in Lamperti's original setting of $L^p$-spaces by results of
Arendt~\cite[Theorem~2.5]{Are83} and Abramovich~\cite[\S 2]{Abr83},
see also~\cite[Remark~5]{deP84}. But it is not clear
whether this assumption is redundant also for Sobolev spaces.

\begin{theorem}\label{thm:HomCoMu}
	Let $\Omega_1,\Omega_2\subset\mathds{R}^N$ be bounded open sets and let $p\in[1,\infty)$, $p\neq2$.
	Let $T$ be a linear isometry from $W^{1,p}(\Omega_1)$ to $W^{1,p}(\Omega_2)$ that is
	order bounded from $W^{1,p}(\Omega_1)$ to $L^p(\Omega_2)$.
	Then $T$ is a weighted composition operator, i.e., there exists a function $\xi$ from
	$\Omega_1$ to $\Omega_2$ and a scalar-valued function $g$ on $\Omega_2$ such that
	$Tu = g \cdot (u \circ \xi)$ almost everywhere for each $u \in W^{1,p}(\Omega_1)$.
	In particular, the preimage under $\xi$ of a Lebesgue null set is a Lebesgue null set.
\end{theorem}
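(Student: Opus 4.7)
The strategy is to reduce the statement to the first author's representation theorem for lattice homomorphisms between Sobolev spaces from~\cite{Bie08f} by showing that $T$ is (up to a sign) a lattice homomorphism. The bridge is Theorem~\ref{thm:disj}: since $p \neq 2$, the isometry $T$ is automatically disjointness preserving. Together with the order boundedness hypothesis this is exactly the situation in which one can split $T$ into its positive and negative parts at the level of operators.

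My first step would be to invoke Theorem~\ref{thm:disj} to conclude that $Tu \wedge Tv = 0$ (in $L^p$, hence also when interpreted as functions) whenever $|u|\wedge|v|=0$ in $W^{1,p}(\Omega_1)$. Since $T$ is order bounded as a map from $W^{1,p}(\Omega_1)$ into $L^p(\Omega_2)$ and disjointness preserving, standard Banach lattice theory (the modulus exists for order bounded disjointness preserving operators) gives the decomposition $T = T^+ - T^-$ with $T^+$ and $T^-$ positive operators that are themselves disjointness preserving; moreover, $T^+ u$ and $T^- u$ have disjoint supports for every $u$. The crucial point is that $|T|u := T^+u + T^-u$ is then a positive, disjointness preserving map from $W^{1,p}(\Omega_1)$ into $L^p(\Omega_2)$, and because $|T|$ coincides in absolute value with $T$ and $T$ is an isometry into $W^{1,p}(\Omega_2)$, one checks that $|T|u$ actually lies in $W^{1,p}(\Omega_2)$ and defines a lattice homomorphism between the two Sobolev spaces.

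Once $|T|$ is identified as a lattice homomorphism from $W^{1,p}(\Omega_1)$ into $W^{1,p}(\Omega_2)$, I apply the representation theorem from~\cite{Bie08f}, which yields a measurable map $\xi \colon \Omega_2 \to \Omega_1$ and a nonnegative function $h$ on $\Omega_2$ such that $(|T|u)(x) = h(x)\,u(\xi(x))$ almost everywhere, together with the pullback property that the preimage under $\xi$ of a Lebesgue null set is null. Finally I recover $T$ itself by setting $g := (\sgn T\setone_{\Omega_1})\,h$, or more carefully by using the sign pattern provided by the decomposition $T=T^+-T^-$: on the support of $T^+u$ one has $Tu = |T|u$, on the support of $T^-u$ one has $Tu = -|T|u$, and the linearity of $T$ together with disjointness preservation forces the choice of sign to depend only on the point $x \in \Omega_2$, not on $u$. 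This delivers the desired scalar function $g$ and the identity $Tu = g\cdot(u\circ\xi)$.

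The main obstacle I anticipate is the second step: verifying that $|T|$ actually takes values in $W^{1,p}(\Omega_2)$ (not only in $L^p(\Omega_2)$) and is a \emph{lattice homomorphism in the Sobolev sense} required to apply~\cite{Bie08f}. The natural argument is that $T$ itself maps into $W^{1,p}(\Omega_2)$, and because $T^+u$ and $T^-u$ have disjoint supports one has $|T|u = |Tu|$ pointwise; since $W^{1,p}(\Omega_2)$ is a Banach lattice under the chosen norm, $|Tu| \in W^{1,p}(\Omega_2)$. Similarly the global sign function $g$ must be shown to be well defined independent of $u$, which uses the linearity of $T$ applied to pairs of test functions with overlapping supports. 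Once these two points are in place the conclusion, including the null-set pullback property, is inherited directly from the representation theorem in~\cite{Bie08f}.
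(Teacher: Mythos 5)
Your proposal follows essentially the same route as the paper: Theorem~\ref{thm:disj} gives disjointness preservation, order boundedness then yields the Meyer decomposition $T = T^+ - T^-$ into lattice homomorphisms (the paper cites~\cite[Corollary~4]{deP84}), and the representation theorem~\cite[Theorem~4.13]{Bie08f} produces the weighted composition form. The one place you diverge is operational rather than conceptual: you apply the representation theorem once to the modulus $|T| = T^+ + T^-$ and then reconstruct the sign, whereas the paper applies it to $T^+$ and $T^-$ separately, obtaining $T^\pm u = g_{1,2}\cdot(u\circ\xi_{1,2})$ with $g_1, g_2 \ge 0$, and then checks via $g_1 = (T\setone_{\Omega_1})^+ = (g_1-g_2)^+$ that $g_1$ and $g_2$ have essentially disjoint supports, so that a single $\xi$ (equal to $\xi_1$ where $g_1>0$ and to $\xi_2$ elsewhere) and $g := g_1 - g_2$ work. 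This choice dissolves the ``main obstacle'' you anticipate: the representation theorem is invoked in the paper for lattice homomorphisms from $W^{1,p}(\Omega_1)$ into $L^p(\Omega_2)$, so there is no need to show that $|T|$ (or $T^\pm$) maps into $W^{1,p}(\Omega_2)$ or is a lattice homomorphism ``in the Sobolev sense''; order boundedness into $L^p(\Omega_2)$ is exactly the hypothesis that makes the $L^p$-valued version applicable. Your variant is workable (for $u \ge 0$ one has $|T|u = |Tu| \in W^{1,p}(\Omega_2)$ by the lattice property of the Sobolev space, and linearity extends this), but it adds verification burden without gain; also note that your claim that $T^+u$ and $T^-u$ have disjoint supports is immediate only for $u \ge 0$, which is however all that is needed.
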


\begin{proof}
	By Theorem~\ref{thm:disj}, $T$ preserves disjointness
	as an operator from $W^{1,p}(\Omega_1)$ to $W^{1,p}(\Omega_2)$ and thus also as an operator
	from $W^{1,p}(\Omega_1)$ to $L^p(\Omega_2)$.
	Hence, by a result due to Meyer~\cite[Corollary~4]{deP84},
	$T = T^+ - T^-$ for lattice homomorphisms $T^+$ and $T^-$
	from $W^{1,p}(\Omega_1)$ to $L^p(\Omega_2)$ that
	satisfy $T^+u = (Tu)^+$ and $T^-u = (Tu)^-$ for $u \ge 0$.
	These operators are weighted composition operators~\cite[Theorem~4.13]{Bie08f}, i.e.,
	$T^+u = g_1 \cdot (u \circ \xi_1)$ and $T^-u = g_2 \cdot (u \circ \xi_2)$,
	where $g_1 \ge 0$ and $g_2 \ge 0$.
	Now
	\[
		g_1 = T^+\setone_{\Omega_1} = (T\setone_{\Omega_1})^+ = (g_1 - g_2)^+,
	\]
	so $g_2 = 0$ almost everywhere on $\{g_1 > 0\}$. Similarly,
	$g_1 = 0$ almost everywhere on $\{g_2 > 0\}$. Hence for
	\[
		\xi \coloneqq \begin{cases} \xi_1, & g_1 > 0, \\ \xi_2, & \text{otherwise} \end{cases}
	\]
	and $g \coloneqq g_1 - g_2$ we have $Tu = g \cdot (u \circ \xi)$ almost
	everywhere.
\end{proof}

\section{Intertwining Weighted Composition Operators}\label{sec:intertwining}

Let $\Omega_1,\Omega_2\subset\mathds{R}^N$ be bounded open sets, and let $1 < p < \infty$.
In this section, $T$ will always denote an operator from $W^{1,p}(\Omega_1)$ to $W^{1,p}_{\mathrm{loc}}(\Omega_2)$.
Consider the following three assumptions:
\begin{enumerate}[(H1)]
\item\label{ass:CoMu}
	There exist $\mathrm{C}^1$-functions
	$\xi\colon \Omega_2 \to \mathds{R}^N$ and $g\colon \Omega_2 \to \mathds{R}$
	such that $\xi(y) \in \Omega_1$ for almost all $y \in \Omega_2$,
	\[
		\{ y \in \Omega_2 : g(y) \neq 0 \text{ and } \xi(y) \in N \}
	\]
	is a Lebesgue null set for every Lebesgue null set $N \subset \Omega_1$,
	and $Tu = g \cdot (u \circ \xi)$ almost everywhere for each $u \in W^{1,p}(\Omega_1)$;
\item\label{ass:pLaplace}
	$\mathfrak{a}_{p,\Omega_2}(Tu,Tv)=\mathfrak{a}_{p,\Omega_1}(u,v)$
		for all $u \in W^{1,p}(\Omega_1)$ and $v \in W^{1,p}_0(\Omega_1)$
		satisfying $Tv \in W^{1,p}_c(\Omega_2)$.
\item\label{ass:W1p0}
	$W^{1,p}_0(\Omega_2) \subset TW^{1,p}_0(\Omega_1)$;
\end{enumerate}

Our aim is to show that if these three assumptions are satisfied, then
locally $\xi$ is a rigid motion, i.e., a composition of a
translation and a rotation, and $g$ is locally constant with $|g| = 1$.

\smallskip

Maybe the assumptions on $T$ seem strange at first glance.
Therefore a few remarks seem to be in order.
\begin{remark}\label{rem:norm}\mbox{}
\begin{enumerate}[(a)]
\item
	If follows from~\href{ass:CoMu} that $T$ is linear.
\item
	In~\href{ass:CoMu} the two conditions on the behavior of $\xi$ are needed
	to ensure that the function $u \circ \xi$ is well-defined almost
	everywhere.
\item
	We will show in Proposition~\ref{prop:CoMu} that if $T$ is a weighted composition
	operator, i.e., $Tu = g \cdot (u \circ \xi)$ for some (not necessarily smooth)
	functions $g$ and $\xi$, and if~\href{ass:pLaplace} and~\href{ass:W1p0}
	are satisfied, then $g$ and $\xi$ agree almost everywhere with
	continuously differentiable functions. So under the other two assumptions, \href{ass:CoMu}
	is equivalent to $T$ being a weighted composition operator.
\item
	If $p \neq 2$ and $T$ is an isometric, order bounded, linear mapping from
	$W^{1,p}(\Omega_1)$ to $W^{1,p}(\Omega_2)$,
	then by Theorem~\ref{thm:HomCoMu} and Corollary~\ref{cor:intertwine}
	the assumptions~\href{ass:CoMu} and~\href{ass:pLaplace} are fulfilled.
\item
	Assumption~\href{ass:W1p0} ensures
	a sufficiently large collection of functions in the image of $T$.
	Moreover, \href{ass:W1p0} is needed in addition to~\href{ass:pLaplace}
	in order to deduce $T$ maps solutions of~\eqref{eq:pLaplace} on $\Omega_1$
	to solutions of~\eqref{eq:pLaplace} on $\Omega_2$
	and is from this point of view rather natural.
\end{enumerate}
\end{remark}

\begin{lemma}\label{lem:cont}
	If $T$ is a weighted composition operator from $W^{1,p}(\Omega_1)$
	to $W^{1,p}_{\mathrm{loc}}(\Omega_2)$, then $T$ is continuous.
\end{lemma}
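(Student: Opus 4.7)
The plan is to apply the closed graph theorem. The domain $W^{1,p}(\Omega_1)$ is a Banach space, and $W^{1,p}_{\mathrm{loc}}(\Omega_2)$ is a Fréchet space whose topology is given by the countable family of seminorms $u \mapsto \|u\|_{W^{1,p}(K_n)}$ for an exhaustion $K_n \nearrow \Omega_2$ by compact sets. Linearity of $T$ is immediate from the pointwise formula $Tu = g \cdot (u \circ \xi)$. Thus it suffices to verify that the graph of $T$ is closed.

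To this end, I would take sequences $u_n \to u$ in $W^{1,p}(\Omega_1)$ and $Tu_n \to f$ in $W^{1,p}_{\mathrm{loc}}(\Omega_2)$ and show that $f = Tu$ almost everywhere. After passing to a subsequence and diagonalizing over the exhaustion, I may assume in addition that $u_n \to u$ almost everywhere on $\Omega_1$ and $Tu_n \to f$ almost everywhere on $\Omega_2$.

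The key point is the following null-set property forced by $T$ being well-defined on equivalence classes: for every Lebesgue null set $N \subset \Omega_1$ the set $\{y \in \Omega_2 : g(y) \neq 0,\ \xi(y) \in N\}$ is a Lebesgue null set in $\Omega_2$. Indeed, the two representatives $0$ and $\setone_N$ of the zero element of $W^{1,p}(\Omega_1)$ must produce the same element $T0 = 0$ of $W^{1,p}_{\mathrm{loc}}(\Omega_2)$; comparing $g \cdot (\setone_N \circ \xi)$ with $0$ yields precisely this statement. Applying it to the null set $N = \{x \in \Omega_1 : u_n(x) \not\to u(x)\}$ gives $u_n \circ \xi \to u \circ \xi$ almost everywhere on $\{g \neq 0\}$, and hence $Tu_n \to Tu$ almost everywhere on $\Omega_2$, since on $\{g = 0\}$ both sides vanish.

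Combined with $Tu_n \to f$ almost everywhere, this yields $f = Tu$, so the graph of $T$ is closed and $T$ is continuous by the closed graph theorem. The only nontrivial step is isolating the null-set property from the mere well-definedness of $T$; once that is in hand, the rest is a routine verification.
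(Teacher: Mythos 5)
Your proof is correct, but it takes a genuinely different route from the paper. The paper exploits the order structure: it writes $T = T^+ - T^-$ with $T^\pm u = g^\pm \cdot (u \circ \xi)$, checks via the identity $T^+u = (Tu^+)^+ - (Tu^-)^+$ that these are positive linear operators into $W^{1,p}_{\mathrm{loc}}(\Omega_2)$, and then invokes the automatic continuity of positive operators from a Banach lattice into a Fr\'echet lattice (citing Aliprantis--Tourky). You instead apply the closed graph theorem for a linear map from a Banach space into the Fr\'echet space $W^{1,p}_{\mathrm{loc}}(\Omega_2)$ and verify closedness of the graph by almost-everywhere convergence along subsequences, using the null-set property of $\xi$ on $\{g \neq 0\}$. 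The paper's argument is shorter and delegates the analytic work to a black-box theorem on positive operators; yours is more self-contained, does not use the lattice structure at all, and makes explicit the measure-theoretic fact that underlies well-definedness. The one point worth flagging: your extraction of the null-set property from ``$T$ is well-defined on equivalence classes'' presupposes that the representation $Tu = g \cdot (u \circ \xi)$ is meant to hold for every representative of $u$ (equivalently, that $u \circ \xi$ is representative-independent up to null sets). That is exactly the reading the paper adopts --- the null-set condition is built into \href{ass:CoMu} and justified in Remark~\ref{rem:norm}(b) --- so your step is legitimate, but it deserves the explicit mention you gave it, since with a weaker reading of ``weighted composition operator'' the argument would not go through.
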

\begin{proof}
	By assumption $Tu = g \cdot (u \circ \xi)$ for all $u \in W^{1,p}(\Omega_1)$,
	where $g$ and $\xi$ are functions on $\Omega_2$.
	Then $T$ is the difference
	of two positive operators $T = T^+ - T^-$, where $T^+u \coloneqq g^+ \cdot (u \circ \xi)$
	and $T^-u \coloneqq g^- \cdot (u \circ \xi)$. Note that $T^+u = (T u^+)^+ - (T u^-)^+$.
	Hence $T^+$ and $T^-$ are positive linear mappings into $W^{1,p}_{\mathrm{loc}}(\Omega_2)$.
	Thus the operators $T^+$ and $T^-$ are continuous~\cite[Theorem~2.32]{AT07}.
	Hence $T$ is continuous.
\end{proof}

\begin{lemma}\label{lem:harm}
	Let $p\in (1,\infty)$, $\alpha = (\alpha_i) \in \mathds{R}^N$ such that
	$|\alpha|^p = (p-1)^{-1}$, and define
	$u \coloneqq \e^f$, $f(x) \coloneqq \sum_{i=1}^N \alpha_i x_i$.
	Then $u \in \mathrm{C}^\infty(\mathds{R}^N)$ is a classical solution of~\eqref{eq:pLaplace}.
\end{lemma}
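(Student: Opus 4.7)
The plan is a direct computation: verify $\diver(|\nabla u|^{p-2}\nabla u) = |u|^{p-2}u$ pointwise on $\mathds{R}^N$. Since $u = \e^f$ is smooth and strictly positive, and $f$ is linear, we can compute everything explicitly.

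First I would observe that $\nabla u = \alpha\, \e^f = \alpha\, u$, so $|\nabla u| = |\alpha|\, u$ (using $u > 0$). This gives
\[
    |\nabla u|^{p-2}\nabla u = |\alpha|^{p-2} u^{p-2} \cdot \alpha u = |\alpha|^{p-2}\, u^{p-1}\, \alpha.
\]
Then I would take the divergence, noting that $\alpha$ is a constant vector, so
\[
    \diver\bigl(|\nabla u|^{p-2}\nabla u\bigr)
        = |\alpha|^{p-2}\, \alpha \cdot \nabla(u^{p-1})
        = (p-1)|\alpha|^{p-2}\, u^{p-2}\, \alpha \cdot \nabla u
        = (p-1)|\alpha|^{p}\, u^{p-1}.
\]

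Finally I would use the hypothesis $|\alpha|^p = (p-1)^{-1}$ to conclude that the right-hand side equals $u^{p-1} = |u|^{p-2}u$, which is precisely the pointwise formulation of~\eqref{eq:pLaplace}. Smoothness of $u$ follows since $f$ is a polynomial and the exponential is entire, so $u \in \mathrm{C}^\infty(\mathds{R}^N)$ and the computation is a classical (not merely weak) identity. There is no real obstacle here; the only subtlety is remembering that $u$ is positive so that $|\nabla u| = |\alpha| u$ without an absolute value issue.
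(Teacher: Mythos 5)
Your computation is correct and is essentially the same as the paper's proof, which carries out the identical calculation componentwise ($D_i u = \alpha_i u$, then summing $D_i(|\nabla u|^{p-2}D_iu)$) rather than in vector notation. Both hinge on the same observations: $u>0$ so $|\nabla u| = |\alpha|u$, and the normalization $|\alpha|^p = (p-1)^{-1}$ cancels the factor $(p-1)|\alpha|^p$.
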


\begin{proof}
	Since $D_i u = \alpha_i u$,
	\[
		|\nabla u|^{p-2} D_i u = \alpha_i |\alpha|^{p-2} u^{p-1},
	\]
	thus
	\begin{align*}
		 \Delta_p u
		 	= \sum_{i=1}^N D_i(|\nabla u|^{p-2} D_i u)
			= \sum_{i=1}^N \alpha_i |\alpha|^{p-2} (p-1) u^{p-2} \alpha_i u
			= |\alpha|^p (p-1) u^{p-1}.
	\end{align*}
	Hence $\Delta_p u = |u|^{p-2} u$ according to the choice of $\alpha$ and
	since $u \ge 0$.
\end{proof}

\begin{proposition}\label{prop:CoMu}
	Assume that $T$ is a weighted composition operator from
	$W^{1,p}(\Omega_1)$ to $W^{1,p}_{\mathrm{loc}}(\Omega_2)$
	that satisfies~\href{ass:pLaplace} and~\href{ass:W1p0}.
	Then $Tu$ satisfies~\href{ass:CoMu}, and the function $g$
	in~\href{ass:CoMu} has no zero in $\Omega_2$.
\end{proposition}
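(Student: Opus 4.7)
My plan is to transfer a rich family of explicit $p$-harmonic functions from $\Omega_1$ to $\Omega_2$ via $T$, use elliptic regularity to extract $C^1$ information about $g$ and $\xi$, and finally rule out zeros of $g$ by combining the strong maximum principle for the $p$-Laplace equation with~\href{ass:W1p0}.

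For every $\alpha\in\mathds{R}^N$ with $|\alpha|^p=(p-1)^{-1}$, Lemma~\ref{lem:harm} says $u_\alpha(x)\coloneqq \e^{\alpha\cdot x}$ solves $\Delta_p u=|u|^{p-2}u$ classically on $\mathds{R}^N$. Given $\phi\in\mathcal{D}(\Omega_2)$, \href{ass:W1p0} provides $v\in W^{1,p}_0(\Omega_1)$ with $Tv=\phi\in W^{1,p}_c(\Omega_2)$, and \href{ass:pLaplace} then yields
\[
\mathfrak{a}_{p,\Omega_2}(Tu_\alpha,\phi)=\mathfrak{a}_{p,\Omega_1}(u_\alpha,v)=0,
\]
so $Tu_\alpha$ is a weak solution of~\eqref{eq:pLaplace} on $\Omega_2$, exactly as in Corollary~\ref{cor:intertwine}. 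By the $C^{1,\beta}_{\mathrm{loc}}$-regularity theorem of DiBenedetto and Tolksdorf for weak solutions of the $p$-Laplace equation, each $Tu_\alpha$ admits a $C^1$ representative, which I denote $w_\alpha$.

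Set $c\coloneqq (p-1)^{-1/p}$; then $\pm c\, e_i$ are admissible $\alpha$'s. Since $Tu_\alpha=g\cdot\e^{\alpha\cdot\xi}$ almost everywhere, $w_{c e_i}\cdot w_{-c e_i}=g^2$ and $w_{c e_i}/w_{-c e_i}=\e^{2c\xi_i}$ a.e., so $h\coloneqq w_{c e_1}w_{-c e_1}$ is a non-negative $C^1$ representative of $g^2$ on $\Omega_2$. On the open set $\Omega_2^*\coloneqq\{h>0\}$, each continuous function $w_{\pm c e_i}$ is non-vanishing and of locally constant sign that agrees a.e.\ with $\sgn(g)$, so a locally constant $\epsilon\colon\Omega_2^*\to\{\pm 1\}$ makes $\epsilon\sqrt{h}$ a $C^1$ representative of $g$ on $\Omega_2^*$ and $\frac{1}{2c}\log(w_{c e_i}/w_{-c e_i})$ a $C^1$ representative of $\xi_i$ on $\Omega_2^*$. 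Since any $\phi\in\mathcal{D}(\Omega_2)$ factors as $\phi=g\,(v\circ\xi)$ almost everywhere by \href{ass:W1p0}, the function $g$ cannot vanish on a set of positive measure, so $\Omega_2^*$ has full Lebesgue measure in $\Omega_2$.

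The main obstacle is the final step $\Omega_2^*=\Omega_2$, i.e.\ that $g$ has no zero. Suppose for contradiction $y_0\in\Omega_2\setminus\Omega_2^*$; then every $w_\alpha$ vanishes at $y_0$. V\'azquez's strong minimum principle for non-negative weak solutions of~\eqref{eq:pLaplace} rules out that $w_\alpha$ has constant sign on any ball about $y_0$: otherwise $\pm w_\alpha$ would be a non-negative weak solution with an interior zero and hence identically zero locally, contradicting $g\neq 0$ a.e. Thus $w_\alpha$ must change sign in every neighborhood of $y_0$, placing $y_0$ in $\overline{\{g>0\}}\cap\overline{\{g<0\}}$. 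To derive the desired contradiction I plan to use \href{ass:W1p0} quantitatively: pick $\phi\in\mathcal{D}(\Omega_2)$ with $\phi(y_0)=1$ and $v\in W^{1,p}_0(\Omega_1)$ with $Tv=\phi$; the identity $v\circ\xi=\phi/g$ on $\Omega_2^*$ combined with a change of variables by the local $C^1$-diffeomorphism $\xi$ should translate the forced blow-up of $\phi/g$ near $y_0$ into a violation of $v\in L^p(\Omega_1)$. Making this change of variables rigorous without prior control of the Jacobian of $\xi$ at $y_0$ is the delicate point, and in the borderline case $p\le N$ a refinement via the $p$-quasi continuous representative of $v$ and $p$-capacity is likely needed. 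Once non-vanishing is established, the $C^1$ representatives extend to all of $\Omega_2$ and the remaining conditions of \href{ass:CoMu} are immediate.
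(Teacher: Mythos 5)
The first two thirds of your argument coincide with the paper's proof: transporting the exponential solutions $u_{\pm,j}$, invoking $\mathrm{C}^1$ regularity for weak solutions of~\eqref{eq:pLaplace}, recovering continuous representatives of $g$ and $\xi$ off the zero set $P=\{g=0\}$, and observing via~\href{ass:W1p0} that $P$ is Lebesgue null. The gap is in the last step, which is also the hardest one. Having correctly deduced from the strong minimum principle that each $w_\alpha$ would have to change sign in every neighbourhood of a hypothetical zero $y_0$ of $g$, you propose to exclude this by arguing that $v\circ\xi=\phi/g$ blows up too fast near $y_0$ to be compatible with $v\in L^p(\Omega_1)$ after a change of variables. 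You flag this yourself as not rigorous, and I do not believe it can be made to work at this stage: you have no lower bound on $|\det\xi'|$ near $y_0$ (your $\mathrm{C}^1$ representative of $\xi$ is only defined on $\Omega_2^*=\Omega_2\setminus P$, and nondegeneracy of $\xi'$ is established only later in the paper, under additional hypotheses), and more fundamentally the obstruction to $g$ vanishing is capacitary rather than measure-theoretic, so a purely $L^p$-integrability argument cannot detect it.

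What the paper does instead is to upgrade ``$P$ is Lebesgue null'' to ``$P$ is $p$-polar'': every $\psi\in W^{1,p}_0(\Omega_2)$ is of the form $T\phi$ by~\href{ass:W1p0}; approximating $\phi$ by test functions, whose images have continuous representatives vanishing on $P$, and using continuity of $T$ (Lemma~\ref{lem:cont}) together with $p$-quasi everywhere convergence of a subsequence shows $\psi=0$ $p$-quasi everywhere on $P$, whence $P$ is $p$-polar by~\cite[Lemma~2.26 and Theorem~2.15]{MZ97}. Since $p$-polar sets have $(N-1)$-dimensional Hausdorff measure zero, Lemma~\ref{lem:capconn} shows that $V\setminus P$ is connected for every connected open $V\subset\Omega_2$, so the continuous nonvanishing function $h$ cannot change sign across $P$; the strong minimum principle then applies to $\pm v_{\pm,j}\ge 0$ on $V$ and forces $P\cap V=\emptyset$. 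This polarity-plus-connectedness step is exactly the ingredient your sketch is missing, and without it the sign-change scenario to which you have reduced the problem cannot be ruled out.
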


\begin{proof}
	By assumption there exist functions $g$ and $\xi$ on $\Omega_2$
	such that $Tu = g \cdot (u \circ \xi)$ almost everywhere
	for all $u \in W^{1,p}(\Omega_1)$.
	We have to show that $g$ and $\xi$ agree almost everywhere
	with continuously differentiable functions, say $h$ and $\zeta$,
	and that $h$ has no zero in $\Omega_2$.

	Let $j\in\{1,\dots,N\}$ be fixed, $\alpha\coloneqq (p-1)^{-1/p}$,
	$u_{\pm,j}(x) \coloneqq \e^{\pm\alpha x_j}$.
	By~\href{ass:W1p0}, for each $\phi$ in $W^{1,p}_c(\Omega_2)$ there exists
	$w \in W^{1,p}_0(\Omega_1)$ such that $Tw = \phi$.
	By Lemma~\ref{lem:harm} the function $u_{\pm,j}$ is a solution of~\eqref{eq:pLaplace},
	so we obtain from~\href{ass:pLaplace} and the divergence theorem that
	\[
		\mathfrak{a}_{p,\Omega_2}(Tu_{\pm,j},\phi) = \mathfrak{a}_{p,\Omega_1}(u_{\pm,j},w) = 0.
	\]
	Since $\phi$ was arbitrary,
	\begin{equation}\label{eq:vformula}
		v_{\pm,j}
			\coloneqq Tu_{\pm}
			= g \cdot (u_{\pm} \circ \xi)
			= g \, \e^{\pm \alpha x_j}
	\end{equation}
	is a weak solution of~\eqref{eq:pLaplace} on $\Omega_2$.
	Elliptic local regularity theory~\cite{Lew83} implies that $v_{\pm,j}$ admits a continuously differentiable
	representative, which we again denote by $v_{\pm,j}$.

	Note that by~\eqref{eq:vformula}
	$g = \sgn(v_{+,j}) ( v_{+,j} \cdot v_{-,j} )^{1/2}$ almost everywhere for every $j$
	and that the right hand side is a continuous function for each $j$.
	Hence the right hand side does in fact not depend on $j$ so that
	\begin{equation}\label{eq:defcontg}
		h \coloneqq \sgn(v_{+,j}) ( v_{+,j} \cdot v_{-,j} )^{1/2}
	\end{equation}
	is well-defined and represents a continuous function that coincides
	with $g$ almost everywhere.
	
	Let $P \coloneqq \{ h = 0 \}$. Then $P$ is a relatively closed subset of $\Omega_2$,
	and by~\eqref{eq:defcontg} we have that $P = \{ v_{\pm,j} = 0 \}$ and that $h$
	is continuously differentiable on $\Omega_2 \setminus P$.
	Also the function
	\[
		\zeta_j \coloneqq \frac{1}{2\alpha} \log\Bigl( \frac{v_+}{v_-} \Bigr)
	\]
	is well-defined and continuously differentiable on $\Omega_2 \setminus P$,
	and by~\eqref{eq:vformula} the vector-valued function $\zeta$ coincides with $\xi$
	almost everywhere on $\Omega_2 \setminus P$. We may define $\zeta$ arbitrarily
	on $P$.

	We have shown that $T\phi = h \cdot (\phi \circ \zeta)$ almost everywhere for
	each $\phi \in W^{1,p}(\Omega_1)$. Using~\href{ass:W1p0}, we see from this that
	every test function $\psi \in \mathcal{D}(\Omega_2)$ is zero
	almost everywhere on $P$. Hence $P$ is a Lebesgue null set.
	Moreover, the function $h \cdot (\phi \circ \zeta)$ is
	continuous at each point of $P$ if $\phi$ is bounded, and it is continuous
	at each point of $\Omega_2 \setminus P$ if $\phi$ is continuous. Hence
	$T\phi$ has a continuous representative whenever $\phi$ is in
	$W^{1,p}(\Omega_1) \cap \mathrm{C}(\Omega_1) \cap L^\infty(\Omega_1)$,
	and this representative vanishes on $P$.

	We are going to show that $P$ is a $p$-polar set.
	Let $\psi$ be a $p$-quasi continuous representative of an arbitrary function
	in $W^{1,p}_0(\Omega_2)$.
	By~\href{ass:W1p0} there exists $\phi$ in $W^{1,p}_0(\Omega_1)$ such that $T\phi = \psi$.
	Let $(\phi_n)$ be a sequence of test functions that converges to $\phi$ in $W^{1,p}(\Omega_1)$,
	and let $\psi_n$ be the continuous representative of $T\phi_n$, which by
	the previous paragraph exists and vanishes on $P$.
	By continuity of $T$ the sequence $(\psi_n)$ converges to $\psi$
	in $W^{1,p}_{\mathrm{loc}}(\Omega_2)$, see Lemma~\ref{lem:cont}.
	Passing to a subsequence we can arrange that $(\psi_n)$ converges
	to $\psi$ $p$-quasi everywhere~\cite[Lemma~2.19]{MZ97}, showing that
	$\psi = 0$ $p$-quasi everywhere on $P$.
	Thus the restriction of $\psi$ to $\Omega_2 \setminus P$ is in
	$W^{1,p}_0(U_2 \setminus P)$~\cite[Lemma~2.26]{MZ97}.
	Since this is true for any $\psi$, $P$ is $p$-polar~\cite[Theorem~2.15]{MZ97}.

	Now we show that in fact $P = \emptyset$, thus finishing the proof.
	For this let $V$ be an open, connected subset
	of $\Omega_2$. Then also $V \setminus P$ is connected, see Lemma~\ref{lem:capconn}.
	Since $h$ is continuous and has no zeros in $V \setminus P$, $h$ does
	not change sign on this set. So assume that $h \ge 0$ on $V$,
	the case $h \le 0$ being analogous.
	Then also $v_{\pm,j} \ge 0$ on $V$ by~\eqref{eq:defcontg}.
	But $v_{\pm,j}$ is a weak solution of~\eqref{eq:pLaplace}, so
	either $v=0$ on $V$ or $v$ is strictly positive on $V$~\cite[Theorem~1.1.1]{PS07}.
	The case $v=0$ on $V$ is impossible since then $V \subset P$,
	which is inconsistent with $P$ being a Lebesgue null set.
	So in fact $v$ is strictly positive on $V$, showing that
	$P \cap V = \emptyset$.
	Since this argument is true for any connected open subset $V$
	of $\Omega_2$, we obtain that $P = \emptyset$.
\end{proof}

We would like to show that $g$ is locally constant
and $\xi$ is locally a rigid motion. Unfortunately, our proof does
not cover all possible cases. However, we do have proofs for the case $p > 2$
and the case $p = 2$ and $N \ge 2$, and these two situations are rather different.
Moreover, we show that for $p=2$ and $N=1$ the conclusion is false, see
Example~\ref{ex:p2N1}.

\smallskip

We start with the case $p=2$.
\begin{lemma}\label{lem:p2prop}
	Let $p=2$, and let $T$ satisfy~\href{ass:CoMu}, \href{ass:pLaplace}, and~\href{ass:W1p0}.
	Then $\nabla \xi_i \perp \nabla \xi_j$ and
	$|\nabla \xi_i| = |\nabla \xi_j|$ for all $i \neq j$.
	Writing $c \coloneqq |\nabla \xi_i|$, we have
	$g - \Delta g = g c^2 = c^N / g$ and
	$g \, \Delta \xi_j + 2 \nabla g \, \nabla \xi_j = 0$.
\end{lemma}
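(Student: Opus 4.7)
My plan is to feed two different classes of elements of $W^{1,2}(\Omega_1)$ into \href{ass:pLaplace}: the exponential solutions from Lemma~\ref{lem:harm}, which will yield the geometric claims about the $\nabla \xi_i$ together with $g - \Delta g = gc^2$ and the drift-balance identity $g\,\Delta \xi_j + 2\nabla g \cdot \nabla \xi_j = 0$; and the constant function $u \equiv 1$, which combined with a local change of variables will supply the remaining identity $gc^2 = c^N/g$.

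For the first step, Lemma~\ref{lem:harm} with $p=2$ gives that $u_\alpha(x) := \e^{\alpha \cdot x}$ is a classical solution of $\Delta u = u$ for every unit vector $\alpha \in \mathds{R}^N$. Using \href{ass:pLaplace} together with \href{ass:W1p0} (so that every test function on $\Omega_2$ arises as $Tv$ for some $v \in W^{1,2}_0(\Omega_1)$), the image $Tu_\alpha = g\,\e^{\alpha\cdot\xi}$ is a weak solution of $\Delta w = w$ on $\Omega_2$; since the construction in the proof of Proposition~\ref{prop:CoMu} makes $g$ and $\xi$ smooth, this holds classically. Expanding $\Delta(g\,\e^{\alpha\cdot\xi}) = g\,\e^{\alpha\cdot\xi}$ and dividing by $\e^{\alpha\cdot\xi}$ yields
$$\Delta g - g + \sum_i \alpha_i\bigl(2\nabla g \cdot \nabla \xi_i + g\,\Delta \xi_i\bigr) + g\sum_{i,j}\alpha_i\alpha_j\,\nabla \xi_i \cdot \nabla \xi_j = 0.$$
Replacing $\alpha$ by $-\alpha$ and averaging and differencing separates the odd (linear) and even (constant plus quadratic) parts in $\alpha$. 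The linear part, vanishing on the unit sphere, vanishes identically, giving $2\nabla g \cdot \nabla \xi_i + g\,\Delta \xi_i = 0$; the even part reads $\Delta g - g = -g\,\alpha^T M \alpha$ with $M_{ij} := \nabla \xi_i \cdot \nabla \xi_j$. Since the left-hand side is $\alpha$-independent, $\alpha^T M \alpha$ is constant on the unit sphere; diagonalising the symmetric positive-semidefinite matrix $M$ then forces $M = c^2 I$ for some $c \ge 0$, delivering $\nabla \xi_i \perp \nabla \xi_j$ ($i \neq j$), $|\nabla \xi_i| = c$, and $g - \Delta g = gc^2$.

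For the second step, using $u \equiv 1 \in W^{1,2}(\Omega_1)$ and $T1 = g$, \href{ass:pLaplace} gives, for any $v \in W^{1,2}_0(\Omega_1)$ with $\phi := Tv \in \mathcal{D}(\Omega_2)$, that
$$\int_{\Omega_1} v = \mathfrak{a}_{2,\Omega_1}(1,v) = \mathfrak{a}_{2,\Omega_2}(g,\phi) = \int_{\Omega_2}(g-\Delta g)\phi.$$
Since $J_\xi^T J_\xi = c^2 I$ from the first step, $|\det J_\xi| = c^N$ and $\xi$ is a local diffeomorphism wherever $c > 0$. Choosing $\phi$ supported in a small ball $U$ on which $\xi|_U$ is injective with $\xi(U) \Subset \Omega_1$, I take $v := (\phi/g) \circ (\xi|_U)^{-1}$ extended by zero; the change-of-variables formula then gives $\int_{\Omega_1} v = \int_U (\phi/g)\,c^N$. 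Equating with the display above and varying $\phi$ yields $g - \Delta g = c^N/g$ on $U$, hence on the open set $\{c > 0\}$ by covering; combined with $g - \Delta g = g c^2$ this gives $gc^2 = c^N/g$ there, while at points where $c = 0$ both sides vanish (using that $g$ is nowhere zero by Proposition~\ref{prop:CoMu}).

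The most delicate point is this last step: ensuring that the explicit $v$ really satisfies $Tv = \phi$ globally on $\Omega_2$, since additional preimages of points of $\xi(U)$ lying in $\Omega_2 \setminus U$ could a priori spoil the identity outside $U$. One may either shrink $U$ further using continuity of $\xi$, or, more cleanly, apply \href{ass:pLaplace} to the explicit local $v$ without requiring $Tv = \phi$ globally and instead compute $\mathfrak{a}_{2,\Omega_2}(g, Tv)$ using the local form of $Tv$ near $U$ — the relevant identity is pointwise and hence only needs to be verified on a dense open subset, which the argument on $\{c>0\}$ already provides.
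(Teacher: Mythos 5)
Your first step---feeding the exponentials $\e^{\alpha\cdot x}$ from Lemma~\ref{lem:harm} into \href{ass:pLaplace}, separating the odd and even parts in $\alpha$, and reading off $M=c^2 I$ from the constancy of $\alpha^T M\alpha$ on the unit sphere---is essentially the paper's argument, executed classically (using the smoothness of $g$ and $\xi$ and the fact that $Tu_\alpha$ solves the equation) rather than through the paper's weak formulation with the test functions $\eta=(u\circ\xi)\,T\phi$. Both routes yield $g\,\Delta\xi_j+2\nabla g\cdot\nabla\xi_j=0$, the relation $g-\Delta g=gc^2$, and the orthogonality statements; your linear-algebra step is, if anything, cleaner than the paper's choice of the two special directions $f(x)=\alpha x_i+\alpha x_j$.

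The gap is in the derivation of $g-\Delta g=c^N/g$, and it is exactly the point you flag: your $v:=(\phi/g)\circ(\xi|_U)^{-1}$ need not satisfy $Tv=\phi$. At this stage $\xi$ is only known to be a local diffeomorphism on $\{c>0\}$, so $\xi^{-1}(\xi(\supp\phi))$ can meet $\Omega_2\setminus U$ and $Tv=g\cdot(v\circ\xi)$ acquires extra support there; worse, that preimage need not be compact in $\Omega_2$, so \href{ass:pLaplace} may not even be applicable to $v$. Neither of your repairs closes this: shrinking $U$ does not remove preimages located elsewhere in $\Omega_2$, and ``computing $\mathfrak{a}_{2,\Omega_2}(g,Tv)$ locally'' is not available, since $\mathfrak{a}_{2,\Omega_2}(g,Tv)=\int_{\Omega_2}(g-\Delta g)\,Tv$ genuinely picks up the spurious contributions. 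The paper goes in the opposite direction: start from $\eta\in\mathcal{D}(U)$, use \href{ass:W1p0} to produce $\phi\in W^{1,2}_0(\Omega_1)$ with $T\phi=\eta$, and then replace $\phi$ by $\phi\,\setone_{\xi(U)}$. This truncation changes $T\phi$ only on $\Omega_2\setminus\xi^{-1}(\xi(U))$, where $\eta$ vanishes anyway, so $T(\phi\,\setone_{\xi(U)})=\eta$ still holds, and the truncated function is supported where $\xi|_U$ is invertible, so the change of variables gives $\int_{\Omega_1}\phi\,\setone_{\xi(U)}=\int_U (c^N/g)\,\eta$ and the identity follows on $U$. With that substitution, the rest of your argument (including the extension from $\{c>0\}$ to all of $\Omega_2$, which the paper instead handles by density of $\{c\neq 0\}$ and continuity) goes through.
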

\begin{proof}
	First note that linear elliptic regularity theory and the proof
	of Proposition~\ref{prop:CoMu} yield that $g$ and $\xi$ are $\mathrm{C}^\infty$-functions.
	
	Let $u = \e^f$ be as in Lemma~\ref{lem:harm} for $p=2$, so that
	$\Delta u = u$. Fix an arbitrary test function $\eta \in \mathcal{D}(\Omega_2)$.
	Then also $\e^{-f \circ \xi} \eta$ is in $W^{1,p}_c(\Omega_2)$. Hence
	by~\href{ass:W1p0} there exists $\phi$ in $W^{1,p}_0(\Omega_1)$ such that $T\phi = \e^{-f \circ \xi}\eta$,
	i.e., $\eta = (u \circ \xi) T\phi$. Thus
	\[
		\nabla\eta = (u \circ \xi) \nabla(f \circ \xi) T\phi + (u \circ \xi) \nabla(T\phi)
			= \nabla(f \circ \xi) \eta + (u \circ \xi) \nabla(T\phi).
	\]
	Since $u$ is a weak solution of~\eqref{eq:pLaplace}, we obtain from~\href{ass:pLaplace} that
	\begin{align*}
		0 & = \mathfrak{a}_{2,\Omega_1}(u,\phi)
			= \mathfrak{a}_{2,\Omega_2}(Tu,T\phi) \\
			& = \int_{\Omega_2} g (u \circ \xi) T\phi + \int_{\Omega_2} \nabla g \, (u \circ \xi) \, \nabla(T\phi) + \int_{\Omega_2} g \, (u \circ \xi) \, \nabla(f \circ \xi) \, \nabla(T\phi) \\
			& = \int_{\Omega_2} g \eta + \int_{\Omega_2} \nabla g \, \nabla\eta - \int_{\Omega_2} \nabla g \, \nabla(f \circ \xi) \, \eta
				+ \int_{\Omega_2} g \, \nabla(f \circ \xi) \nabla\eta - \int_{\Omega_2} g |\nabla(f \circ \xi)|^2 \eta.
	\end{align*}
	By the same argument the last identity holds also with $f$ replaced by $-f$.
	Adding these two equations and dividing by $2$ yields
	\begin{equation}\label{eq:p2first}
		\int_{\Omega_2} g \eta + \int_{\Omega_2} \nabla g \, \nabla \eta = \int_{\Omega_2} g |\nabla(f \circ \xi)|^2 \, \eta.
	\end{equation}
	whereas subtracting one from the other and dividing by $2$ gives
	\begin{equation}\label{eq:p2second}
		\int_{\Omega_2} \nabla g \, \nabla(f \circ \xi) \, \eta = \int_{\Omega_2} g \, \nabla(f \circ \xi) \, \nabla \eta
			= -\int_{\Omega_2} g \, \Delta (f \circ \xi) \eta - \int_{\Omega_2} \nabla g \, \nabla (f \circ \xi) \, \eta,
	\end{equation}
	where we used the divergence theorem.
	
	Since~\eqref{eq:p2first} is true for every test function $\eta$, we have in fact the pointwise identity
	$g - \Delta g = g \, |\nabla(f \circ \xi)|^2$, so that $c \coloneqq c(y) \coloneqq |\nabla(f \circ \xi)|$
	does not depend on the choice of $f$ in Lemma~\ref{lem:harm}.
	For $f(x) = x_i$ we see that $c = |\nabla \xi_i|$.
	Considering $f(x) \coloneqq \alpha x_i + \alpha x_j$ for $i \neq j$, where $2\alpha^2 = 1$,
	we see that
	\[
		c^2 = |\alpha \nabla \xi_i + \alpha \nabla \xi_j|^2
			= \alpha^2 \bigl( |\nabla \xi_i|^2 + 2 \nabla \xi_i \, \nabla \xi_j + |\nabla \xi_j|^2 \bigr)
			= c^2 + \nabla \xi_i \, \nabla \xi_j,
	\]
	which shows that $\nabla\xi_i$ and $\nabla\xi_j$ are orthogonal for $i \neq j$.
	Moreover, since \eqref{eq:p2second} is true for every test function,
	$g \, \Delta\xi_j + 2 \nabla g \, \nabla\xi_j = 0$ on $\Omega_2$,
	where we have set $f(x) \coloneqq x_j$.

	Fix $y_0 \in \Omega_2$ such that $c(y_0) \neq 0$.
	Then $\xi$ is invertible on a neighborhood $U_2$ of $y_0$.
	Let $\eta$ be a test function on $U_2$. By~\href{ass:W1p0} there
	exists $\phi \in W^{1,p}_0(\Omega_1)$ such that $T\phi = \eta$.
	Replacing $\phi$ by $\phi \, \setone_{\xi(U_2)} \in W^{1,p}_0(\Omega_1)$,
	we can assume that $\phi = 0$ outside $\xi(U_2)$.
	Then
	\begin{align*}
		\int_{U_2} \frac{c^N}{g} \eta
			& = \int_{U_2} (\phi \circ \xi) |\det \xi'|
			= \int_{\Omega_1} \phi
			= \mathfrak{a}_{2,\Omega_1}(\setone, \phi) \\
			& = \mathfrak{a}_{2,\Omega_2}(g, \eta)
			= \int_{\Omega_2} g \eta + \int_{\Omega_2} \nabla g \, \nabla\eta
			= \int_{\Omega_2} g \eta - \int_{\Omega_2} \Delta g \, \eta.
	\end{align*}
	Since $\eta$ was an arbitrary test function,
	$g - \Delta g = c^N / g$ on $U_2$, and in particular at $y_0$.
	Since the preimage of null sets under $\xi$ are null sets, $\xi$ is not constant
	on an open set. Hence $\{ c \neq 0 \}$ is dense in $\Omega_2$.
	Thus $g - \Delta g = c^N / g$ on the whole set $\Omega_2$ by continuity.
\end{proof}

\begin{proposition}
	Let $p=2$, $N=2$, and let $T$ satisfy~\href{ass:CoMu}, \href{ass:pLaplace}, and~\href{ass:W1p0}.
	Then $\nabla g = 0$ on $\Omega_2$.
\end{proposition}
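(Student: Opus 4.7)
The plan is to exploit the fact that when $N = 2$ the chain of identities supplied by Lemma~\ref{lem:p2prop} collapses to an algebraic constraint forcing $|g| \equiv 1$. Starting from the equalities $g c^2 = g - \Delta g = c^N / g$ established there, I would simply multiply through by $g$ to obtain the pointwise identity $g^2 c^2 = c^N$ on $\Omega_2$. Specializing to $N = 2$ this becomes
\[
	(g^2 - 1) c^2 = 0 \quad \text{on } \Omega_2,
\]
a purely algebraic relation between the smooth functions $g$ and $c$.

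Next I would invoke the density argument that already appears at the end of the proof of Lemma~\ref{lem:p2prop}: because the preimage of a Lebesgue null set under $\xi$ is a null set by~\href{ass:CoMu}, the map $\xi$ cannot be constant on any nonempty open set, so $c = |\nabla \xi_j|$ cannot vanish on any nonempty open subset of $\Omega_2$. Thus $\{c \neq 0\}$ is dense in $\Omega_2$. On this dense set the algebraic identity above forces $g^2 = 1$, and since $g$ is smooth (linear elliptic regularity, as noted in the opening of the proof of Lemma~\ref{lem:p2prop}), continuity extends $g^2 \equiv 1$ to all of $\Omega_2$.

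Differentiating $g^2 \equiv 1$ gives $2 g \nabla g \equiv 0$, and because Proposition~\ref{prop:CoMu} guarantees that $g$ has no zero in $\Omega_2$, we may divide by $g$ to conclude $\nabla g \equiv 0$ on $\Omega_2$. I do not expect a genuine obstacle here: the statement is essentially squeezed out of Lemma~\ref{lem:p2prop} by the numerical accident that, for $N = 2$, the powers of $c$ appearing on the two sides of $g c^2 = c^N / g$ match. If anything requires care it is only that $g c^2 = c^N / g$ is being used as a pointwise, not merely almost-everywhere, identity, which is legitimate because both sides are continuous.
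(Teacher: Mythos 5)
Your proposal is correct and follows essentially the same route as the paper: combine the identities $g c^2 = g - \Delta g = c^N/g$ from Lemma~\ref{lem:p2prop}, observe that for $N=2$ this forces $g^2 = 1$, and conclude by smoothness of $g$ that $\nabla g = 0$. The only difference is that you explicitly handle the set where $c$ vanishes via the density of $\{c \neq 0\}$ (an argument already contained in the proof of Lemma~\ref{lem:p2prop}), a point the paper's one-line proof passes over silently, so your version is if anything slightly more careful.
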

\begin{proof}
	Since $g c^2 = c^2 / g$ by Lemma~\ref{lem:p2prop},
	$g$ takes values in $\{-1,1\}$. By continuity of $g$ this shows that $g$ is locally constant,
	i.e., $\nabla g = 0$.
\end{proof}

\begin{proposition}
	Let $p=2$, $N \ge 3$, and let $T$ satisfy~\href{ass:CoMu}, \href{ass:pLaplace}, and~\href{ass:W1p0}.
	Then $\nabla g = 0$ on $\Omega_2$.
\end{proposition}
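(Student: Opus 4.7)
The plan is to derive two independent PDEs for the conformal factor $c$ and combine them to force $c\equiv 1$, from which $\nabla g=0$ will follow via the relation $g^{2}=c^{N-2}$ of Lemma~\ref{lem:p2prop}.

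Since $g$ is continuous and nowhere zero on $\Omega_{2}$, it has constant sign on each connected component; without loss of generality I work on a component where $g>0$, so that $g=c^{(N-2)/2}$ and $c>0$. Substituting this expression into the identity $g-\Delta g=gc^{2}$ from Lemma~\ref{lem:p2prop} and computing $\Delta g$ by the chain rule yields, after dividing through by $c^{(N-6)/2}$, the PDE
\[
	c\,\Delta c + \tfrac{N-4}{2}\,|\nabla c|^{2}=\tfrac{2}{N-2}(c^{2}-c^{4}).
\]

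Second, I exploit the conformality of $\xi$ as an independent source of information on $c$. The orthogonality and equal-magnitude relations of Lemma~\ref{lem:p2prop} say that $\xi'(y)=c(y)R(y)$ with $R(y)\in O(N)$, i.e., $\xi$ is a smooth conformal diffeomorphism onto its image. In dimension $N\ge 3$, Liouville's theorem classifies every such map as the restriction of a M\"obius transformation, so $c$ is locally either constant or of the form $c(y)=k\,|y-y_{0}|^{-2}$. In both cases $c^{(N-2)/2}$ is harmonic (in the second case it is, up to a constant, the Newton potential centered at $y_{0}$), and expanding $\Delta\bigl(c^{(N-2)/2}\bigr)=0$ via the chain rule produces
\[
	c\,\Delta c + \tfrac{N-4}{2}\,|\nabla c|^{2}=0.
\]

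Comparing the two displayed identities forces $c^{2}(1-c^{2})=0$; positivity of $c$ then gives $c\equiv 1$ on the component, whence $g\equiv 1$ and $\nabla g=0$ on $\Omega_{2}$. The main obstacle is the second identity: the cleanest derivation goes through Liouville's theorem, though one may alternatively obtain it directly by differentiating the conformality relation $\nabla\xi_{i}\cdot\nabla\xi_{j}=c^{2}\delta_{ij}$ and extracting a formula for $\partial_{j}\partial_{k}\xi_{i}$ via cyclic symmetrization, then summing over $i$. This cancellation is specific to $N\ge 3$ and matches the fact that the previous proposition handled $N=2$ by the entirely separate observation $g^{2}=1$.
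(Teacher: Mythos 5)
Your proof is correct, and while it rests on the same pivot as the paper's --- Liouville's theorem for conformal maps in dimension $N\ge 3$ --- the way you exploit it is genuinely different and arguably cleaner. The paper splits into the two Liouville cases and treats them by separate mechanisms: in the affine case it invokes the third identity of Lemma~\ref{lem:p2prop}, namely $g\,\Delta\xi_j+2\nabla g\,\nabla\xi_j=0$, together with the pairwise orthogonality of the $\nabla\xi_j$, to get $\nabla g=0$ directly; in the inversion case it writes $g=c^{(N-2)/2}$ as a radial function, computes $\Delta g$ in polar coordinates, and derives a polynomial identity in $r$ that violates the identity theorem. You instead observe that in \emph{both} Liouville cases $c^{(N-2)/2}$ is harmonic (constant, respectively a multiple of the Newtonian kernel $|y-y_0|^{2-N}$ with pole outside $\Omega_2$), which packaged with $g^2=c^{N-2}$ and $g-\Delta g=gc^2$ yields two PDEs for $c$ whose difference forces $c^2(1-c^2)=0$, hence $c\equiv 1$ and $g\equiv\pm1$. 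This buys you a unified treatment that never uses the identity $g\,\Delta\xi_j+2\nabla g\,\nabla\xi_j=0$, and as a bonus it already shows $c\equiv1$, i.e.\ that $\xi$ is a local isometry, which the paper only establishes later in Proposition~\ref{prop:ortho}; your harmonicity observation also sidesteps the explicit radial Laplacian computation (where the paper writes $\Delta g=g_{rr}+g_r/r$ rather than $g_{rr}+(N-1)g_r/r$, though its contradiction survives either way). The only points worth making explicit are that $c>0$ everywhere (which follows from $g^2=c^{N-2}$ on the dense set $\{c\neq0\}$, continuity, and $g$ having no zeros) so that the division by $c^{(N-6)/2}$ is legitimate, and that Liouville applies componentwise so a single M\"obius transformation governs each connected component; your closing remark about an alternative derivation by symmetrizing the conformality relations is only a sketch, but the Liouville route you actually use is complete.
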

\begin{proof}
	By Lemma~\ref{lem:p2prop} the function $\xi$ is a conformal
	mapping and hence a M\"obius transformation by Liouville's theorem~\cite[\S 3.8]{Bea83}.
	Thus either $\xi$ is affine linear or $\xi = \tau_1 \circ \sigma \circ \tau_2$,
	where $\tau_i(x) = k_i A_ix + y_i$ for an orthogonal matrix $A_i$, a scalar $k_i > 0$,
	and a vector $y_i \in \mathds{R}^N$, and where $\sigma$ is the inversion at the
	unit sphere, i.e., $\sigma(x) = \frac{x}{|x|^2}$,
	cf.~\cite[Theorem~3.5.1 and Equation~(3.1.4)]{Bea83}.
	
	First consider the case that $\xi$ is affine linear. Then $\Delta \xi_j = 0$
	for all $j$ and thus $\nabla g$ is orthogonal to each $\nabla \xi_j$
	by Lemma~\ref{lem:p2prop}. Since the $\nabla \xi_j$ are pairwise
	orthogonal, $\nabla g = 0$ on $\{ |\nabla \xi_j| \neq 0 \}$, which implies
	$\nabla g = 0$ on $\Omega_2$ by continuity since $\{ |\nabla \xi_j| \neq 0 \}$
	is dense in $\Omega_2$.

	Now assume that $\xi = \tau_1 \circ \sigma \circ \tau_2$.
	We will show that this assumption is contradictory, thus proving the claim.
	By the chain rule,
	\[
		c \coloneqq |\nabla \xi_j| = |\xi'|
			= \bigl| k_1 (A_1 \circ \sigma \circ \tau_2) \cdot (\sigma' \circ \tau_2) \cdot k_2 A_2 \bigr|
			= k_1 k_2 |\sigma' \circ \tau_2|,
	\]
	where we have used that by Lemma~\ref{lem:p2prop} the matrix $\xi'$ is a scalar
	multiple of an orthogonal matrix. Thus
	\begin{equation}\label{eq:cintau2}
		c(x) = \frac{K^2}{|\tau_2(x)|^2} \text{ with } K \coloneqq (k_1 k_2)^{1/2} > 0,
	\end{equation}
	compare also~\cite[Theorem~3.1.6]{Bea83}.
	Since $g \neq 0$, there exists a ball $B$ in $\Omega_2$ such that
	$g$ does not change sign in $B$. For simplicity we assume $g > 0$,
	the case $g < 0$ being similar.
	Then
	\[
		g(x) = c(x)^{\frac{N-2}{2}} = K^{N-2} |\tau_2(x)|^{2-N} = K^{N-2} k_2^{2-N} r^{2-N} \text{ on } B
	\]
	by Lemma~\ref{lem:p2prop}, where
	\begin{equation}\label{eq:rintau2}
		r \coloneqq \frac{|\tau_2(x)|}{k_2} = |x + z| \text{ for } z \coloneqq \frac{1}{k_2} A_2^{-1} y_2.
	\end{equation}
	Thus $g$ is a radial function with respect to the center $z$.
	Calculating $\Delta g$ in polar coordinates we obtain that
	\[
		\Delta g
			= g_{rr} + \frac{g_r}{r}
			= K^{N-2} k_2^{2-N} (2-N)^2 r^{-N}
			= \frac{(2-N)^2}{r^2}g
		\text{ on } B,
	\]
	hence
	\begin{equation}\label{eq:gforsphere}
		0 = g - \Delta g - gc^2
			= g \Bigl( 1 - \frac{(2-N)^2}{r^2} - \frac{K^4}{k_2^4 r^4} \Bigr) \text{ on } B
	\end{equation}
	by Lemma~\ref{lem:p2prop}, \eqref{eq:cintau2} and~\eqref{eq:rintau2}.
	Since $g$ has no zero, this implies
	\[
		r^4 - (2-N)^2 r^2 - K^4 k_2^{-4} = 0 \text{ for all } r \in S \coloneqq \biggl\{ \frac{|\tau_2(x)|}{k_2} : x \in B \biggr\},
	\]
	which contradicts the identity theorem for polynomials as $S$ possesses an interior point.
\end{proof}

Surprisingly, the conclusion $\nabla g = 0$ is false for $p=2$ and $N=1$.
\begin{example}\label{ex:p2N1}
	Let $\xi(y) \coloneqq -\Artanh \e^{-2y}$ and
	$g(y) \coloneqq \sinh^{1/2}(2y)$ on $\Omega_2 \coloneqq (1,2)$.
	Let $\Omega_1 \coloneqq \xi(\Omega_2)$ and
	$Tu \coloneqq g \cdot (u \circ \xi)$ for $u \in W^{1,2}(\Omega_1)$.
	Then $T$ satisfies~\href{ass:CoMu}, \href{ass:pLaplace}, and~\href{ass:W1p0}.
	In fact, $T$ is a lattice isomorphism from $W^{1,2}(\Omega_1)$
	to $W^{1,2}(\Omega_2)$ that satisfies $TW^{1,2}_0(\Omega_1) = W^{1,2}_0(\Omega_2)$.
\end{example}
\begin{proof}
	Note that $\xi'(y) = 1/\sinh(2y)$, so that $\xi$
	is strictly increasing on $\Omega_2$.
	Thus the only non-trivial statement is that $T$ satisfies~\href{ass:pLaplace}.

	Let $u \in W^{1,2}(\Omega_1)$ be arbitrary. We show that $g^4 (Tu - (Tu)'') = Tu - Tu''$.
	In fact,
	\begin{align*}
		g - g'' & = \frac{1}{g^3}, &
		\xi' & = \frac{1}{g^2}, &
		2g' \xi' & = \frac{2\cosh(2y)}{g^3} = -g \xi'',
	\end{align*}
	and hence
	\begin{align*}
		Tu - (Tu)'' & = g (u \circ \xi) - g'' (u \circ \xi) - 2 g' (u \circ \xi)' - g (u \circ \xi)'' \\
			& = (g - g'') (u \circ \xi) - 2 g' \xi' (u' \circ \xi) - g \xi'' (u' \circ \xi) \xi'' - g (u'' \circ \xi) (\xi')^2 \\
			& = \frac{1}{g^4} \bigl( g \cdot (u \circ \xi) - g \cdot (u'' \circ \xi) \bigr)
			= \frac{Tu - Tu''}{g^4}.
	\end{align*}
	Now let $v \in \mathcal{D}(\Omega_1)$ be arbitrary. Then
	\begin{align*}
		\mathfrak{a}_{2,\Omega_2}(Tu,Tv)
			& = \int_{\Omega_2} Tu \, Tv + \int_{\Omega_2} (Tu)' (Tv)'
			= \int_{\Omega_2} Tu \, \bigl( Tv - (Tv)'' \bigr) \\
			& = \int_{\Omega_2} \frac{1}{g^4} Tu \, \bigl( Tv - Tv'' \bigr)
			= \int_{\Omega_2} (u \circ \xi) \bigl( (v \circ \xi) - (v'' \circ \xi) \bigr) \xi' \\
			& = \int_{\xi(\Omega_2)} u \, (v - v'')
			= \int_{\Omega_1} uv + \int_{\Omega_1} u' v'
			= \mathfrak{a}_{2,\Omega_1}(u,v)
	\end{align*}
	by the substitution formula and partial integration.
	Since the test functions are dense, this proves~\href{ass:pLaplace}.
\end{proof}
It should be noted, however, that the operator $T$ in Example~\ref{ex:p2N1} is not isometric.
In fact, one can check (numerically) that
\[
	\| T\setone \|_{W^{1,2}(\Omega_2)}^2
		= \int_{\Omega_2} g^2 + \int_{\Omega_2} (g')^2
		\approx 23.66 \gg 0.118 \approx |\Omega_1| = \| \setone \|_{W^{1,2}(\Omega_1)}^2.
\]
But $T$ is isometric on $W^{1,2}_0(\Omega_1)$ as an immediate consequence of~\href{ass:pLaplace}
and the fact that $TW^{1,2}_0(\Omega_1) = W^{1,2}_0(\Omega_2)$.

\bigskip

We now turn our attention to the case $p > 2$.
\begin{proposition}\label{prop:nablanull}
	Let $p$ be in $(2,\infty)$, and let $T$ satisfy~\href{ass:CoMu}, \href{ass:pLaplace}, and~\href{ass:W1p0}.
	Then $\nabla g = 0$ on $\Omega_2$.
\end{proposition}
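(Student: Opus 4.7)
My plan is to extract $\nabla g=0$ from the second-order Gâteaux derivative of the intertwining identity~\href{ass:pLaplace} evaluated at the constant base function $u\equiv 1$. The key point is that at $u\equiv 1$ the gradient terms in $\mathfrak{b}_{p,\Omega_1}(1,\phi,v)$ vanish by the convention following Definition~\ref{def:forms}, so the left-hand side collapses to a simple $L^2$-pairing, while the right-hand side retains the full second-order $\nabla g$-structure in $T\phi,Tv$; comparing them as bilinear differential operators in the test-function images will force $\nabla g$ to vanish.

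Concretely, I would differentiate the identity $\mathfrak{a}_{p,\Omega_1}(u+s\phi,v)=\mathfrak{a}_{p,\Omega_2}(Tu+sT\phi,Tv)$ at $s=0$ and apply Proposition~\ref{prop:form} on both sides (using Remark~\ref{rem:form2} on the $W^{1,p}_{\mathrm{loc}}$-side) to obtain
\[
	\mathfrak{b}_{p,\Omega_1}(u,\phi,v) = \mathfrak{b}_{p,\Omega_2}(Tu,T\phi,Tv)
\]
for all $u,\phi\in W^{1,p}(\Omega_1)$ and $v\in W^{1,p}_0(\Omega_1)$ with $Tv\in W^{1,p}_c(\Omega_2)$. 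Specializing to $u\equiv 1$, so that $Tu=g$, yields
\begin{align*}
	(p-1)\int_{\Omega_1}\phi v
		&= (p-1)\int_{\Omega_2}|g|^{p-2}\psi\eta + (p-2)\int_{\Omega_2}|\nabla g|^{p-4}(\nabla g\cdot\nabla\psi)(\nabla g\cdot\nabla\eta) \\
		&\quad + \int_{\Omega_2}|\nabla g|^{p-2}\nabla\psi\cdot\nabla\eta,
\end{align*}
where $\psi\coloneqq T\phi$ and $\eta\coloneqq Tv$. By~\href{ass:W1p0} the images $\psi,\eta$ may be taken as arbitrary test functions in $\mathcal{D}(\Omega_2)$.

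Next I would pass to a local diffeomorphism chart. The null-set condition in~\href{ass:CoMu} combined with a Sard-type argument forces $|\det D\xi|>0$ almost everywhere, so the open set on which $\xi$ is a local $\mathrm{C}^1$-diffeomorphism is dense in $\Omega_2$. Fixing a small open $U_2\subset\Omega_2$ on which $\xi$ is injective onto its image and such that pre-images $\phi,v\in W^{1,p}_0(\Omega_1)$ of supported $\psi,\eta\in\mathcal{D}(U_2)$ have support in $\xi(U_2)$, the change of variables $x=\xi(y)$ rewrites $\int_{\Omega_1}\phi v=\int_{U_2}\psi\eta\,|\det D\xi|/g^2$. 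Substituting and integrating by parts in $\eta$ gives $\int_{U_2}\eta\,L[\psi]\,dy=0$ for all $\eta\in\mathcal{D}(U_2)$ and all $\psi\in\mathcal{D}(U_2)$, where $L$ is a linear second-order differential operator in $\psi$. Since both $\eta$ and $\psi$ are arbitrary, $L$ must vanish identically as an operator, forcing each of its coefficients to be zero pointwise.

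The principal symbol of $L$ is
\[
	a_{ij} = |\nabla g|^{p-2}\delta_{ij} + (p-2)\,|\nabla g|^{p-4}\,\partial_i g\,\partial_j g,
\]
whose eigenvalues at any point where $\nabla g\neq 0$ are $|\nabla g|^{p-2}$ on $(\nabla g)^\perp$ and $(p-1)|\nabla g|^{p-2}$ along $\nabla g$, both strictly positive for $p>2$. Hence $a_{ij}\equiv 0$ forces $\nabla g=0$ on $U_2$, and by continuity of $\nabla g$ together with density of such diffeomorphism patches, $\nabla g=0$ on all of $\Omega_2$. The main obstacle will be the bookkeeping for the change of variables: one has to verify that $\xi$ is a local diffeomorphism on a dense open set and that pre-images of $\psi,\eta$ with support in $U_2$ can be chosen with support in $\xi(U_2)$, which amounts to shrinking $U_2$ so that no additional preimages of $\xi(U_2)$ lie elsewhere in $\Omega_2$.
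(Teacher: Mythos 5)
Your opening move---differentiating \href{ass:pLaplace} to obtain $\mathfrak{b}_{p,\Omega_1}(u,\phi,v)=\mathfrak{b}_{p,\Omega_2}(Tu,T\phi,Tv)$ and then specializing $u\equiv\setone$ so that only the form $(p-2)|\nabla g|^{p-4}\scalar{\nabla g}{\cdot}^2+|\nabla g|^{p-2}|\cdot|^2$ survives on the right---is exactly the paper's starting point, and your eigenvalue computation is the same positivity the paper uses when it notes that ``both summands have the same sign.'' The genuine gap is in how you evaluate the left-hand side $(p-1)\int_{\Omega_1}\phi v$. Your substitution $\int_{\Omega_1}\phi v=\int_{U_2}\psi\eta\,|\det D\xi|/g^2$ needs (i) $\xi$ to be a local diffeomorphism on the chart, which is not yet available at this stage (a Sard/area-formula argument can be pushed through, but only after invoking Proposition~\ref{prop:CoMu} to know $g$ has no zeros), and, more seriously, (ii) $\phi$ and $v$ to vanish almost everywhere off $\xi(U_2)$. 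The second point fails in general: the set $N_1=\Omega_1\setminus\xi(\Omega_2)$ can have positive Lebesgue measure even under \href{ass:CoMu}--\href{ass:W1p0} (Example~\ref{ex:largeN1}), the relations $T\phi=\psi$, $Tv=\eta$ constrain $\phi$ and $v$ only on $\xi(\Omega_2)$, and your change of variables silently discards the term $\int_{N_1}\phi v$. Shrinking $U_2$ to exclude extra preimages, as you suggest, does not touch this set; moreover $T$ is injective on the relevant domain (apply \href{ass:pLaplace} to $(v,v)$), so the preimages cannot simply be ``chosen'' with small support. Since the discarded term is controlled only through the $W^{1,p}$-norm via \href{ass:pLaplace}, it is of the same order as the principal part along your oscillating test functions, and the symbol argument does not close.

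The paper avoids any substitution: it takes $\phi$ with $T\phi=\eta\,\e^{-f\circ\xi}$, so that $T(\e^f\phi)=\eta$, uses $\nabla\setone=0$ to write $\mathfrak{b}_{p,\Omega_1}(\setone,\e^f,\phi)=(p-1)\mathfrak{a}_{p,\Omega_1}(\setone,\e^f\phi)$, and then applies \href{ass:pLaplace} once more to turn this into $(p-1)\mathfrak{a}_{p,\Omega_2}(g,\eta)$, an expression living entirely on $\Omega_2$; the $f\mapsto-f$ symmetrization then yields $|\nabla g|^{p-2}g\,|\nabla(f\circ\xi)|^2=0$ and hence the contradiction with \href{ass:CoMu} on any open set where $\nabla g\neq0$. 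If you want to salvage your route, the analogous repair is to apply \href{ass:pLaplace} with $u=\setone$ and $w=\phi v$ (after checking $\phi v\in W^{1,p}_0(\Omega_1)$ and $T(\phi v)=\psi\eta/g\in W^{1,p}_c(\Omega_2)$), expressing $\int_{\Omega_1}\phi v$ as $\mathfrak{a}_{p,\Omega_2}(g,\psi\eta/g)$ with no determinant. A secondary issue: forming the divergence-form operator $L$ by integrating by parts in $\eta$ requires $|\nabla g|^{p-2}$ to be weakly differentiable, whereas $g$ is a priori only $\mathrm{C}^{1,\alpha}$ by $p$-Laplace regularity; this is avoidable by working with the bilinear form directly, but as written it is another unproved step.
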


\begin{proof}
	By Proposition~\ref{prop:form}, Remark~\ref{rem:form2}, and assumption~\href{ass:pLaplace} we have that
	\begin{equation}\label{eq:binv}
		 \mathfrak{b}_{p,\Omega_1}(u,v,w) 
			= \mathfrak{b}_{p,\Omega_2}(Tu,Tv,Tw)
	\end{equation}
	for all $u,v \in W^{1,p}(\Omega_1)$ and $w \in W^{1,p}_0(\Omega_1)$
	satisfying $Tw \in W^{1,p}_c(\Omega_2)$.

	Let $f \in \mathrm{C}^\infty(\overline{\Omega}_1)$, and
	let $\eta \in \mathcal{D}(\Omega_2)$ be an arbitrary test function.
	By~\href{ass:W1p0} there exists $\phi$ in $W^{1,p}_0(\Omega_1)$ such that
	$T\phi = \eta \e^{-f\circ\xi} \in W^{1,p}_c(\Omega_2)$.
	Since $\e^f \phi$ is in $W^{1,p}_0(\Omega_1)$ and $T(\e^f \phi) = \eta$, we obtain
	from~\href{ass:pLaplace}, \eqref{eq:binv}, and the definitions of $\mathfrak{a}_{p,\Omega_1}$
	and $\mathfrak{b}_{p,\Omega_1}$ that
	\begin{equation}\label{eq:relationab}
		(p-1) \mathfrak{a}_{p,\Omega_2}(g, \eta)
			= (p-1) \mathfrak{a}_{p,\Omega_1}(\setone, \e^f \phi)
			= \mathfrak{b}_{p,\Omega_1}(\setone, \e^f, \phi)
			= \mathfrak{b}_{p,\Omega_2}(g, T\e^f, T\phi).
	\end{equation}
	We point out that that~\eqref{eq:relationab} is in general false for $p=2$.

	Expanding the left-most and right-most term of~\eqref{eq:relationab} by their definition and
	using $g \eta = T\e^f \; T\phi$ to cancel several terms, we arrive at
	\begin{equation}\label{eq:first}
		\begin{aligned}
			& (p-1) \int_{\Omega_2} |\nabla g|^{p-2} \scalar{\nabla g}{\nabla \eta} \\
				& \qquad = (p-2) \int_{\Omega_2} |\nabla g|^{p-4} \scalar{\nabla g}{\nabla (T\e^f)} \scalar{\nabla g}{\nabla (T\phi)} \\
				& \qquad\qquad + \int_{\Omega_2} |\nabla g|^{p-2} \scalar{\nabla (T\e^f)}{\nabla (T\phi)}.
		\end{aligned}
	\end{equation}
	Since
	\[
		\nabla\eta
			= \nabla(\e^{f \circ \xi} \cdot T\phi)
			= \e^{f\circ\xi} \, \nabla(f \circ \xi) \, T\phi + \e^{f\circ\xi} \, \nabla(T\phi)
			= \eta \, \nabla(f \circ \xi) + \e^{f\circ\xi} \, \nabla(T\phi),
	\]
	we have
	\[
		\nabla(T\phi) = \e^{-f\circ\xi} \bigl( \nabla\eta - \eta \, \nabla(f\circ\xi) \bigr).
	\]
	Plugging the latter identity and
	\[
		\nabla(T\e^f)
			= \nabla(g \cdot \e^{f\circ\xi})
			= \e^{f\circ\xi} \bigl(\nabla g + g \, \nabla(f\circ\xi)\bigr)
	\]
	into the right hand side of~\eqref{eq:first}, we obtain
	\begin{align*}
		& (p-1) \int_{\Omega_2} |\nabla g|^{p-2} \scalar{\nabla g}{\nabla\eta} \\
			& \qquad = (p-2) \int_{\Omega_2} |\nabla g|^{p-4} \scalar{\nabla g}{\nabla g + g \nabla(f\circ\xi)} \scalar{\nabla g}{\nabla\eta - \eta \, \nabla(f \circ \xi)} \\
			& \qquad\qquad + \int_{\Omega_2} |\nabla g|^{p-2} \scalar{\nabla g + g \nabla(f\circ\xi)}{\nabla\eta - \eta \, \nabla(f\circ\xi)}.
	\end{align*}

	Adding the latter equation to the corresponding one with $f$ replaced by $-f$ and multiplying out, several terms
	cancel. Dividing by $2$, we end up with
	\begin{align*}
		0 = (p-2) \int_{\Omega_2} |\nabla g|^{p-4} g \eta \, \scalar{\nabla g}{\nabla (f\circ\xi)}^2
			+ \int_{\Omega_2} |\nabla g|^{p-2} g \eta \, |\nabla (f\circ\xi)|^2.
	\end{align*}
	Since $\eta$ is an arbitrary test function,
	\[
		0 = (p-2) |\nabla g|^{p-4} g \, \scalar{\nabla g}{\nabla(f\circ\xi)}^2 + |\nabla g|^{p-2} g \, |\nabla(f\circ\xi)|^2
	\]
	almost everywhere. Since both summands have the same sign,
	\begin{equation}\label{eq:vanish}
		|\nabla g|^{p-2} g \, |\nabla (f\circ\xi)|^2 = 0.
	\end{equation}

	Recall that we want to show that $\nabla g = 0$. Assume to the contrary
	that $|\nabla g| > 0$ on a set $U$ of positive measure, which is necessarily open
	due to the continuity of $\nabla g$, and which we can choose
	to be connected. Then~\eqref{eq:vanish} implies that $\nabla (f\circ\xi) = 0$
	on $U$ for every smooth function $f$.
	In particular $\nabla \xi_j = 0$ on $U$ for each $j$,
	i.e., $\xi$ is constant on $U$. But this contradicts~\href{ass:CoMu}.
\end{proof}

So if either $p=2$ and $N \ge 2$ or $p > 2$ we know that $\nabla g = 0$.
In the remainder of this section we will take this as an assumption instead
of conditions on $p$ and $N$.
\begin{proposition}\label{prop:ortho}
	Let $T$ satisfy~\href{ass:CoMu}, \href{ass:pLaplace}, and~\href{ass:W1p0}.
	Assume that $\nabla g = 0$.
	Then $\xi'(x)$ is an orthogonal matrix for all $x \in \Omega_2$.
\end{proposition}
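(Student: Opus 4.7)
The plan is to apply~\href{ass:pLaplace} to the exponential weak solutions of the $p$-Laplace equation provided by Lemma~\ref{lem:harm} and then exploit the symmetry $\alpha\leftrightarrow-\alpha$ inherent in the eigenvalue condition $(p-1)|\alpha|^p=1$.

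Since $g$ is continuous, nowhere zero (Proposition~\ref{prop:CoMu}), and locally constant (as $\nabla g=0$), it suffices to argue on a single connected component $V\subset\Omega_2$ on which $g$ equals a constant $g_0\neq 0$. Fix $\alpha\in\mathds{R}^N$ with $(p-1)|\alpha|^p=1$ and set $u\coloneqq\e^{\alpha\cdot x}$. By Lemma~\ref{lem:harm}, $\mathfrak{a}_{p,\Omega_1}(u,v)=0$ for every $v\in W^{1,p}_0(\Omega_1)$. Combining~\href{ass:pLaplace} with~\href{ass:W1p0} (the latter supplies a $T$-preimage $v\in W^{1,p}_0(\Omega_1)$ of any $\phi\in\mathcal{D}(V)$), one obtains $\mathfrak{a}_{p,V}(Tu,\phi)=0$ for every $\phi\in\mathcal{D}(V)$, so $Tu$ is a weak solution of $\Delta_p(Tu)=|Tu|^{p-2}Tu$ on $V$.

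Using $\nabla g=0$, one has $Tu=g_0\e^{\alpha\cdot\xi}$ and $\nabla(Tu)=g_0\e^{\alpha\cdot\xi}\,(\xi')^T\alpha$. Inserting these into the weak identity, dividing through by the non-vanishing factor $g_0|g_0|^{p-2}\e^{(p-1)\alpha\cdot\xi}$, and integrating by parts on the gradient term produces the distributional identity
\[
    \diver\bigl(|(\xi')^T\alpha|^{p-2}(\xi')^T\alpha\bigr)=1-(p-1)|(\xi')^T\alpha|^p \quad\text{on } V.
\]
The condition on $\alpha$ is invariant under $\alpha\mapsto-\alpha$, and since $x\mapsto|x|^{p-2}x$ is odd, this substitution flips the sign of the divergence term while leaving the right-hand side, which depends only on $|(\xi')^T\alpha|$, unchanged. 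Adding the two identities eliminates the divergence and yields
\[
    (p-1)|\xi'(y)^T\alpha|^p=1=(p-1)|\alpha|^p,
\]
first as distributions and then pointwise on $V$ by continuity of both sides in $y$.

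Consequently $|\xi'(y)^T\alpha|=|\alpha|$ for every $y\in V$ and every $\alpha$ with $(p-1)|\alpha|^p=1$; positive homogeneity extends this to all $\alpha\in\mathds{R}^N$. Hence $\xi'(y)^T$ is an isometry of $\mathds{R}^N$, i.e.\ $\xi'(y)$ is orthogonal, and since $V$ was an arbitrary component the conclusion holds throughout $\Omega_2$. The main technical point is the distributional integration by parts: although $|(\xi')^T\alpha|^{p-2}$ can blow up for $1<p<2$ at points where $(\xi')^T\alpha=0$, the composite $|\nabla(Tu)|^{p-2}\nabla(Tu)$ is continuous on $V$ (vanishing where $\nabla(Tu)=0$), so the manipulation is valid in $\mathcal{D}'(V)$, and continuity of both sides of the final scalar identity lifts the conclusion to a pointwise statement.
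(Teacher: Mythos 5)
Your argument is correct and follows essentially the same route as the paper's: push the exponential solutions $\e^{\pm\alpha\cdot x}$ from Lemma~\ref{lem:harm} through $T$ using~\href{ass:pLaplace} and~\href{ass:W1p0}, then symmetrize over $\alpha\leftrightarrow-\alpha$ so that the divergence (first-order) terms cancel, leaving $(p-1)\,|\xi'(y)^T\alpha|^p=1=(p-1)|\alpha|^p$. The only cosmetic difference is the final step, where you deduce orthogonality from $|\xi'(y)^T\alpha|=|\alpha|$ for all $\alpha$ by homogeneity, whereas the paper specializes to coordinate directions and their pairwise sums to get $|\nabla\xi_i|\equiv1$ and $\nabla\xi_i\perp\nabla\xi_j$ directly.
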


\begin{proof}
	Let $u$ be as in Lemma~\ref{lem:harm}, i.e.,
	\[
		u = \e^f \text{ for } f(x) = \sum_{i=1}^N \alpha_i x_i
			\text{ with } \sum_{i=1}^N \alpha_i^2 = (p-1)^{-2/p},
	\]
	so that $\Delta_p u = u^{p-1}$.
	Let $\eta \in \mathcal{D}(\Omega_2)$ be an arbitrary test function.
	By~\href{ass:W1p0} there exists $\phi \in W^{1,p}_0(\Omega_1)$ such that $T\phi = \e^{-(p-1) (f \circ \xi)} \eta \in W^{1,p}_c(\Omega_2)$.
	Now
	\begin{equation}\label{eq:psiphirel2}
		\begin{aligned}
			\nabla\eta
				& = (p-1) (u\circ\xi)^{p-1} \nabla(f\circ\xi) \, T\phi + (u\circ\xi)^{p-1} \nabla(T\phi) \\
				& = (p-1) \nabla(f\circ\xi) \, \eta + (u\circ\xi)^{p-1} \nabla(T\phi),
		\end{aligned}
	\end{equation}
	where we used $\nabla(u\circ\xi) = (u\circ\xi) \nabla(f\circ\xi)$.
	Now by~\href{ass:pLaplace}, since $\nabla g = 0$,
	\begin{align*}
		0 & = \mathfrak{a}_{p,\Omega_1}(u,\phi)
			= \mathfrak{a}_{p,\Omega_2}(Tu,T\phi) \\
			& = \int_{\Omega_2} |g|^{p-2} g (u\circ\xi)^{p-1} T\phi + \int_{\Omega_2} |g|^{p-2} g (u\circ\xi)^{p-1} |\nabla (f\circ\xi)|^{p-2} \nabla(f\circ\xi) \nabla(T\phi) \\
			& = \int_{\Omega_2} |g|^{p-2} g \eta + \int_{\Omega_2} |g|^{p-2} g |\nabla (f\circ\xi)|^{p-2} \, \nabla(f\circ\xi) \nabla\eta \\
			& \qquad - (p-1) \int_{\Omega_2} |g|^{p-2} g |\nabla (f\circ\xi)|^p \, \eta,
	\end{align*}
	where for the last identity we used~\eqref{eq:psiphirel2}.
	Adding the last equation to the corresponding one with
	$f$ replaced by $-f$ and dividing by $2$, we have shown that
	\[
		0 = \int_{\Omega_2} |g|^{p-2} g \eta - (p-1) \int_{\Omega_2} |g|^{p-2} g |\nabla(f\circ\xi)|^p \eta.
	\]
	Since $\eta$ was an arbitrary test function, $|\nabla(f\circ\xi)| \equiv (p-1)^{-1/p}$ on $\Omega_2$.

	Putting in particular $f(x) \coloneqq (p-1)^{-1/p} x_i$ for some $i \in \{1,\dots,N\}$
	shows that $|\nabla\xi_i| \equiv 1$ on $\Omega_2$.
	But now taking $f(x) = \alpha x_i + \beta x_j$ for $i \neq j$ such that
	$\alpha^2 + \beta^2 = (p-1)^{-2/p}$ and $\alpha, \beta \neq 0$ we obtain
	\begin{align*}
		(p-1)^{-2/p}
			& = |\nabla(f\circ\xi)|^2
			= |\alpha \nabla\xi_i + \beta \nabla\xi_j|^2 \\
			& = \alpha^2 + \alpha \beta \scalar{\nabla\xi_i}{\nabla\xi_j} + \beta^2
			= (p-1)^{-2/p} + \alpha \beta \scalar{\nabla\xi_i}{\nabla\xi_j}.
	\end{align*}
	This shows that $\nabla\xi_i$ and $\nabla\xi_j$ are orthogonal at every point of $\Omega_2$.
\end{proof}

The following is a consequence of preceding theorem, see~\cite[Proposition~2.3]{Are02}.
\begin{corollary}\label{cor:lociso}
	Let $T$ satisfy~\href{ass:CoMu}, \href{ass:pLaplace}, and~\href{ass:W1p0}.
	Assume that $\nabla g = 0$.
	Then $\xi$ is locally a rigid motion.
	In other words, the restriction of $\xi$ to any connected component of $\Omega_2$
	is a rigid motion. If moreover $\Omega_2$ is connected, then
	$\xi$ extends to a rigid motion on $\mathds{R}^N$.
\end{corollary}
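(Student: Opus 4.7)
The plan is to deduce the corollary directly from Proposition~\ref{prop:ortho}, which already gives $\xi'(x) \in O(N)$ for every $x \in \Omega_2$, together with the classical rigidity fact that a smooth map with orthogonal Jacobian is locally affine.

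First I would fix a connected component $V$ of $\Omega_2$. By Proposition~\ref{prop:CoMu} (elliptic regularity applied to the solutions $v_{\pm,j}$), the function $\xi$ is in fact smooth, so I may differentiate twice freely. The orthogonality relation reads
\[
	\sum_k (\partial_i \xi_k)(\partial_j \xi_k) = \delta_{ij} \text{ on } V \text{ for all } i,j.
\]
Setting $T_{abc} \coloneqq \sum_k (\partial_a \partial_b \xi_k)(\partial_c \xi_k)$ and differentiating this identity in $x_l$ gives $T_{lij} + T_{lji} = 0$, that is, antisymmetry in the last two indices. Symmetry of mixed partial derivatives gives $T_{abc} = T_{bac}$, i.e., symmetry in the first two indices.

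The main computational step is to combine these two symmetry relations. A short chain,
\[
	T_{abc} = T_{bac} = -T_{bca} = -T_{cba} = T_{cab} = T_{acb} = -T_{abc},
\]
forces $T \equiv 0$ on $V$. Fixing $a,b$, the vanishing of $T_{abc}$ for all $c$ says that the vector $(\partial_a \partial_b \xi_k)_k$ is orthogonal to every column of the invertible matrix $\xi'(x)$, hence is the zero vector. Thus all second partial derivatives of $\xi$ vanish on $V$, so $\xi|_V$ is affine with orthogonal linear part, which is precisely a rigid motion. If $\Omega_2$ is itself connected, the resulting affine formula extends verbatim to a rigid motion on all of $\mathds{R}^N$.

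I do not anticipate a genuine obstacle here: once Proposition~\ref{prop:ortho} is in hand, this is a standard rigidity computation, and the cited reference~\cite{Are02} essentially packages exactly this argument. The only mild technical point is having enough regularity of $\xi$ to differentiate twice, which is already supplied by the elliptic bootstrap used in the proof of Proposition~\ref{prop:CoMu}.
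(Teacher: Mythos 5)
Your overall route is the same as the paper's: the paper disposes of this corollary by citing \cite[Proposition~2.3]{Are02}, which is precisely the rigidity statement ``orthogonal Jacobian everywhere implies rigid motion on each connected component,'' and your symmetric/antisymmetric index chain for $T_{abc}$ is the classical computation behind that statement. The algebra itself is correct, as is the final step that $T_{ab\cdot}\equiv 0$ forces $\partial_a\partial_b\xi_k = 0$ for all $k$ because the columns of the orthogonal matrix $\xi'(x)$ span $\mathds{R}^N$.

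The one genuine gap is the regularity you invoke in order to ``differentiate twice freely.'' Hypothesis \href{ass:CoMu} only posits $\xi\in\mathrm{C}^1$, and the elliptic regularity used in the proof of Proposition~\ref{prop:CoMu} is the degenerate $p$-Laplace theory of \cite{Lew83}, which for $p>2$ yields $\mathrm{C}^{1,\alpha}$ representatives of $v_{\pm,j}$, not $\mathrm{C}^2$ or $\mathrm{C}^\infty$; so smoothness of $\xi$ does not follow from what you cite. There are two standard repairs. Either avoid second derivatives entirely: a $\mathrm{C}^1$ (even merely differentiable) map whose Jacobian is orthogonal at every point is, together with its local inverses, locally $1$-Lipschitz, hence a local isometry, hence locally affine --- this is exactly the content and the proof strategy of \cite[Proposition~2.3]{Are02}, which is why the paper simply cites it. Or justify smoothness a posteriori: once Proposition~\ref{prop:ortho} gives $|\nabla\xi_j|\equiv 1$, the gradients $\nabla v_{\pm,j}$ never vanish, the $p$-Laplace equation is uniformly elliptic in a neighborhood of every point, and bootstrapping upgrades $v_{\pm,j}$, hence $\xi$, to $\mathrm{C}^\infty$, after which your computation goes through verbatim. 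As written, however, the appeal to Proposition~\ref{prop:CoMu} alone does not supply the two derivatives your argument uses.
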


\begin{proposition}\label{prop:essbij}
	Let $T$ satisfy~\href{ass:CoMu}, \href{ass:pLaplace}, and~\href{ass:W1p0}.
	Assume that $\nabla g = 0$.
	Then $\xi$ is bijective from the open set $U_2 \coloneqq \xi^{-1}(\Omega_1)$
	to the open subset $U_1 \coloneqq \xi(U_2)$ of $\Omega_1$.
	Moreover, $N_2 \coloneqq \Omega_2 \setminus U_2$ has Lebesgue measure zero
	and $U_1$ is dense in $\overline{\Omega}_1$ (equivalently:
	$N_1 \coloneqq \Omega_1 \setminus U_1$ has no interior points).
\end{proposition}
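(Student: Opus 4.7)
The plan is to establish the four claims one by one, using that by Proposition~\ref{prop:ortho} and Corollary~\ref{cor:lociso} the function $\xi$ is continuous and its restriction to each connected component of $\Omega_2$ is a rigid motion of $\mathds{R}^N$. Consequently $U_2 = \xi^{-1}(\Omega_1)$ is open as the preimage of an open set, and $U_1 = \xi(U_2)$ is open in $\mathds{R}^N$ (hence in $\Omega_1$) since rigid motions are open maps. The measure claim $|N_2|=0$ is immediate from~\href{ass:CoMu}, which asserts $\xi(y)\in\Omega_1$ for almost every $y\in\Omega_2$.

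For injectivity of $\xi$ on $U_2$, I argue by contradiction. Suppose $y_1\neq y_2$ in $U_2$ satisfy $\xi(y_1)=\xi(y_2)=x_0\in\Omega_1$. Since each component is mapped injectively by $\xi$, the points $y_1,y_2$ must lie in different connected components of $\Omega_2$, so I can choose a small open ball $B\ni x_0$ in $\Omega_1$ and disjoint open neighborhoods $V_i\ni y_i$ for which $\xi|_{V_i}\colon V_i\to B$ is a homeomorphism. The key step is then to invoke~\href{ass:W1p0}: pick a non-zero $\phi_0\in\mathcal{D}(V_1)$ and find $\psi\in W^{1,p}_0(\Omega_1)$ with $T\psi=\phi_0$. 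Since $\phi_0\equiv 0$ on $V_2$ and $g$ is nowhere zero by Proposition~\ref{prop:CoMu}, the identity $T\psi=g\cdot(\psi\circ\xi)$ forces $\psi\circ\xi=0$ almost everywhere on $V_2$; since $\xi|_{V_2}$ is a measure-preserving homeomorphism onto $B$, this gives $\psi=0$ almost everywhere on $B$. But then $T\psi=g\cdot(\psi\circ\xi)$ also vanishes on $V_1$ (as $\xi(V_1)=B$), contradicting $T\psi=\phi_0$.

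For the density of $U_1$ in $\overline{\Omega}_1$ it suffices to show that $N_1$ has empty interior. Suppose instead that an open ball $B\subset\Omega_1$ is disjoint from $U_1$, and fix a non-zero $\phi\in\mathcal{D}(B)$. Because $\xi(U_2)=U_1$ is disjoint from $B$, the composition $\phi\circ\xi$ vanishes on $U_2$, and since $N_2$ is a null set this means $T\phi=0$ almost everywhere. Applying~\href{ass:pLaplace} with $u=v=\phi$ (which is legitimate because $Tv$ is represented by $0\in W^{1,p}_c(\Omega_2)$) then yields
\[
	\|\phi\|_{W^{1,p}(\Omega_1)}^p = \mathfrak{a}_{p,\Omega_1}(\phi,\phi) = \mathfrak{a}_{p,\Omega_2}(T\phi,T\phi) = 0,
\]
which forces $\phi=0$, a contradiction.

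I expect the injectivity step to be the main obstacle: it is crucial to invoke~\href{ass:W1p0} (rather than~\href{ass:pLaplace}) in order to produce a function whose image under $T$ is concentrated in one of the two sheets $V_1,V_2$; without this surjectivity-type hypothesis, it is not clear how to separate the sheets. Once this is done, the remaining steps are essentially bookkeeping with the local rigid-motion structure of $\xi$.
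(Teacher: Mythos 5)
Your proof is correct and follows essentially the same route as the paper: openness and the null-measure of $N_2$ from~\href{ass:CoMu} and Corollary~\ref{cor:lociso}, injectivity by using~\href{ass:W1p0} to produce a function supported in only one of the two sheets over a common image ball, and emptiness of the interior of $N_1$ via~\href{ass:pLaplace} applied to a test function with $T\phi=0$. The only cosmetic difference is your (correct) observation that the two preimage points must lie in different connected components, which the paper does not need to make explicit.
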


\begin{proof}
	The sets $U_1$ and $U_2$ are open since $\xi$ is a continuous open mapping by Corollary~\ref{cor:lociso}.
	By construction $\xi$ is surjective from $U_2$ to $U_1$.
	Since $\xi(x) \in \Omega_1$ for almost every $x \in \Omega_2$, $N_2$ has measure zero.

	Assume for contradiction that $\xi$ is not injective from $U_2$ to $U_1$, i.e., there exist
	$y_1 \neq y_2$ in $U_2$ such that $x \coloneqq \xi(y_1) = \xi(y_2)$.
	By Corollary~\ref{cor:lociso} there exists $r > 0$ such that
	$\xi$ is a rigid motion from $B_i \coloneqq B(y_i,r) \subset U_2$
	to $B_0 \coloneqq B(x,r) \subset U_1$,
	$B_1 \cap B_2 = \emptyset$.
	Consider a test function $\phi \in \mathcal{D}(\Omega_2)$, $\phi \neq 0$, such that
	$\supp\phi \subset B_1$. By~\href{ass:W1p0}
	there exists $u \in W^{1,p}_0(\Omega_1)$ such that $Tu = g \cdot (u\circ\xi) = \phi$
	almost everywhere. Since $g$ has no zero and $\phi=0$ on $B_2$,
	$u$ has to vanish almost everywhere on $\xi(B_2) = B_0$.
	But then $\phi$ vanishes almost everywhere on $B_1 = \xi^{-1}(B_0)$,
	a contradiction.
	Thus we have shown that $\xi$ is bijective from $U_2$ to $U_1$.

	Assume for contradiction that $N_1$ has an interior point, i.e.,
	there exists an open set $V \subset N_1$.
	Let $\phi \in \mathcal{D}(V)$, $\phi \neq 0$. Then $T\phi = 0$, thus by~\href{ass:pLaplace}
	\[
		0 < \int_{\Omega_1} |\phi|^p + \int_{\Omega_1} |\nabla \phi|^p
			= \mathfrak{a}_{p,\Omega_1}(\phi,\phi)
			= \mathfrak{a}_{p,\Omega_2}(T\phi,T\phi)
			= 0,
	\]
	a contradiction. Thus $N_1$ has no interior points.
\end{proof}
It is obvious that the choice of $U_1$ and $U_2$
in Proposition~\ref{prop:essbij} is maximal, i.e., that every
other pair of sets $V_1 \subset \Omega_1$ and $V_2 \subset \Omega_2$
such that $\xi$ is bijective from $V_2$ to $V_1$ has the property
that $V_i \subset U_i$ for $i=1,2$.

\begin{corollary}
	Let $T$ satisfy~\href{ass:CoMu}, \href{ass:pLaplace}, and~\href{ass:W1p0}.
	Assume that $\nabla g = 0$. Then $|g| \equiv 1$ on $\Omega_2$.
\end{corollary}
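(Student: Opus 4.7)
My plan is to apply~\href{ass:pLaplace} to a single, well-chosen test function supported in one coordinate patch where $g$ is constant and $\xi$ is rigid. Fix $y_0\in\Omega_2$; by continuity of $g$ and density of $U_2$ in $\Omega_2$ (its complement has Lebesgue measure zero by Proposition~\ref{prop:essbij}, so cannot contain an open set), it suffices to prove $|g(y_0)|=1$ for $y_0\in U_2$. I then pick a ball $B$ with $y_0\in B$ and $\overline{B}\subset U_2$, small enough that $g\equiv c$ on $B$ (using $\nabla g=0$) and $\xi|_B$ is a rigid motion onto $\xi(B)\subset U_1\subset\Omega_1$ (Corollary~\ref{cor:lociso}).

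Next I choose any nonzero $\psi\in\mathcal{D}(\xi(B))$, viewed as an element of $W^{1,p}_0(\Omega_1)$. The key step is to verify that $T\psi = g\cdot(\psi\circ\xi)$ vanishes almost everywhere outside $B$, so that $T\psi\in W^{1,p}_c(\Omega_2)$ and~\href{ass:pLaplace} applies with $u=v=\psi$. Indeed, for $y\in U_2\setminus B$ the injectivity of $\xi|_{U_2}$ (Proposition~\ref{prop:essbij}) gives $\xi(y)\in U_1\setminus\xi(B)\subset\Omega_1\setminus\supp\psi$, hence $\psi(\xi(y))=0$; since $g$ has no zeros (Proposition~\ref{prop:CoMu}), this forces $T\psi(y)=0$; and the remaining set $\Omega_2\setminus U_2$ is a Lebesgue null set.

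Having placed $T\psi$ in $W^{1,p}_c(\Omega_2)$, I apply~\href{ass:pLaplace}. Since $\xi|_B$ is a rigid motion, $\xi'$ is an orthogonal matrix (so $|\nabla(\psi\circ\xi)|=|(\nabla\psi)\circ\xi|$ pointwise on $B$) and the Jacobian determinant has modulus one; a direct change of variables then yields $\mathfrak{a}_{p,\Omega_2}(T\psi,T\psi) = |c|^p\,\mathfrak{a}_{p,\Omega_1}(\psi,\psi)$. Comparing with the identity furnished by~\href{ass:pLaplace} and using $\mathfrak{a}_{p,\Omega_1}(\psi,\psi)>0$ (since $\psi\not\equiv 0$) forces $|c|^p=1$. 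The only delicate point in the argument is the support statement for $T\psi$; it uses crucially both the nonvanishing of $g$ and the global injectivity of $\xi$ on $U_2$, so in effect every structural result of this section combines in this one short computation.
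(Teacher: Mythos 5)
Your proof is correct and takes essentially the same route as the paper: both apply \href{ass:pLaplace} to a function supported in the region where $\xi$ is a measure-preserving bijection, change variables using $|\det\xi'|\equiv 1$ to identify $|g|^p$ with $1$ on $U_2$, and then pass to all of $\Omega_2$ by continuity of $g$ and density of $U_2$. The only notable difference is that the paper pairs $\setone$ (whose image is $g$) against a pulled-back test function, so that only the zero-order term $\int_{U_2}|g|^p v=\int_{U_2}v$ survives, whereas you evaluate the full energy $\mathfrak{a}_{p,\Omega_2}(T\psi,T\psi)$ and hence also need the orthogonality of $\xi'$ for the gradient term; your verification that $T\psi\in W^{1,p}_c(\Omega_2)$ via the injectivity of $\xi|_{U_2}$ and the nonvanishing of $g$ is a point the paper's proof passes over more quickly.
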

\begin{proof}
	Let $U_1$ and $U_2$ be as in Proposition~\ref{prop:essbij}, so that
	$\xi$ is a diffeomorphism from $U_2$ to $U_1$.
	Let $v \in \mathcal{D}(U_2)$ be arbitrary. Then there exists $u \in \mathcal{D}(U_1)$
	such that $u \circ \xi = v$.
	Thus by~\href{ass:pLaplace} and the substitution rule
	\begin{equation}\label{eq:substxi}
		\int_{U_2} |g|^p v
			= \mathfrak{a}_{p,\Omega_2}(g, Tu)
			= \mathfrak{a}_{p,\Omega_1}(\setone, u)
			= \int_{U_1} u
			= \int_{U_2} v
	\end{equation}
	since $|\det\xi'| \equiv 1$.
	As the test functions are dense, from this we see that
	$|g| \equiv 1$ on $U_2$. But since $g$ is continuous and
	$U_2$ is dense in $\Omega_2$, $|g| \equiv 1$ on $\Omega_2$.
\end{proof}

Under the assumptions of this section, the set $N_1$
in Proposition~\ref{prop:essbij} may be rather large in measure
as Example~\ref{ex:largeN1} shows.
On the other hand, in Proposition~\ref{prop:smallN1} we show that
for isometries this cannot be the case.
Conditions under which $N_1$ is empty will
be discussed in the next section.

\begin{example}\label{ex:largeN1}
	Let $\Omega_1 \coloneqq (0,1)$, and let $\Omega_2$ be an open
	set of Lebesgue measure $|\Omega_2| < 1$ such that $\overline{\Omega}_2 = [0,1]$.
	Let $Tu \coloneqq u \circ \xi$ for $\xi \coloneqq \id|_{\Omega_2}$.
	Then $T$ satisfies~\href{ass:CoMu}, \href{ass:pLaplace}, and~\href{ass:W1p0}.
	With the notations of Proposition~\ref{prop:essbij}
	we have $N_2 = \emptyset$ and $N_1 = (0,1) \setminus \Omega_1$.
	Thus $N_1$ has positive measure.
\end{example}

\begin{proposition}\label{prop:smallN1}
	Let $T$ be an isometric operator from $W^{1,p}(\Omega_1)$
	to $W^{1,p}(\Omega_2)$ that satisfies~\href{ass:CoMu} and~\href{ass:W1p0}.
	Then the set $N_1$ in Proposition~\ref{prop:essbij} has Lebesgue measure zero.
\end{proposition}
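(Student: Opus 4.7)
The plan is to leverage the isometry property to obtain an integral identity that forces any function in $W^{1,p}(\Omega_1)$ together with its gradient to vanish on $N_1$, and then specialise to the constant function $u\equiv 1$ to conclude $|N_1|=0$.

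First I observe that, since $T$ is isometric, Corollary~\ref{cor:intertwine} guarantees that $T$ satisfies~\href{ass:pLaplace}. Consequently the results of this section are available: as recalled in the paragraph preceding the proposition we are operating under $\nabla g = 0$, and hence the preceding corollary gives $|g|\equiv 1$ on $\Omega_2$ while Corollary~\ref{cor:lociso} shows that $\xi$ is locally a rigid motion. In particular $\xi'(y)$ is orthogonal and $|\det\xi'(y)|=1$ at every $y\in\Omega_2$.

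Using $\nabla g=0$ together with $|g|\equiv 1$ and the orthogonality of $\xi'$, we obtain $\nabla(Tu) = g\,(\xi')^{T}(\nabla u\circ\xi)$ almost everywhere, so that $|Tu|^p = |u\circ\xi|^p$ and $|\nabla(Tu)|^p = |\nabla u\circ\xi|^p$ almost everywhere on $\Omega_2$. Since $N_2 = \Omega_2\setminus U_2$ is a Lebesgue null set (Proposition~\ref{prop:essbij}) and $\xi$ is a bijection from $U_2$ onto $U_1$ with $|\det\xi'|\equiv 1$, the change of variables formula, applied on each connected component of $U_2$ (where $\xi$ is an actual rigid motion), yields
\[
    \|Tu\|_{W^{1,p}(\Omega_2)}^{p}
    = \int_{U_2} |u\circ\xi|^p + \int_{U_2} |\nabla u\circ\xi|^p
    = \int_{U_1} |u|^p + \int_{U_1} |\nabla u|^p
\]
for every $u\in W^{1,p}(\Omega_1)$.

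Combining this with $\|Tu\|_{W^{1,p}(\Omega_2)}^p = \|u\|_{W^{1,p}(\Omega_1)}^p$ and subtracting gives
\[
    \int_{N_1}|u|^p + \int_{N_1}|\nabla u|^p = 0
    \qquad \text{for every } u\in W^{1,p}(\Omega_1).
\]
Since $\Omega_1$ is bounded, the constant function $u\equiv 1$ belongs to $W^{1,p}(\Omega_1)$, and testing the above identity with it shows $|N_1|=0$. There is no genuine obstacle in this argument; the only delicate point is justifying the global change of variables, which is handled by decomposing $U_2$ into its countably many connected components and invoking the rigid motion description from Corollary~\ref{cor:lociso} on each of them.
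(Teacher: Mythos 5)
Your argument is correct and is essentially the paper's proof: both reduce to testing the isometry on the constant function $\setone_{\Omega_1}$ and using that $\xi$ is a measure-preserving bijection from $U_2$ onto $U_1$ with $|N_2|=0$, so that $|U_1|=|U_2|=|\Omega_2|=|\Omega_1|$. The paper does this directly in one display, whereas you first derive the change-of-variables identity for general $u$ and then specialise to $u\equiv 1$; this is a harmless elaboration, not a different method.
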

\begin{proof}
	By Proposition~\ref{prop:essbij}, the Lebesgue measures of $U_1$, $U_2$,
	and $\Omega_2$ agree. If $T$ is isometric, then in particular
	\[
		|\Omega_2|^p
			= \| \setone_{\Omega_2} \|_{W^{1,p}(\Omega_2)}
			= \| T\setone_{\Omega_1} \|_{W^{1,p}(\Omega_2)}
			= \| \setone_{\Omega_1} \|_{W^{1,p}(\Omega_1)}
			= |\Omega_1|^p.
	\]
	Hence $|U_1| = |\Omega_1|$, which shows that $N_1$ has measure zero.
\end{proof}

For convenience, we summarize the main results of this section in the following theorem,
confer also Theorem~\ref{thm:HomCoMu}.
\begin{theorem}\label{thm:sumrep}
	Let $\Omega_1$ and $\Omega_2$ be bounded open subsets of $\mathds{R}^N$, $N \ge 1$,
	$p \in (1,\infty)$, and $T$ be an operator from $W^{1,p}(\Omega_1)$
	to $W^{1,p}_{\mathrm{loc}}(\Omega_2)$ such that $W^{1,p}_0(\Omega_2) \subset TW^{1,p}_0(\Omega_1)$.
	Assume any of the following premises:
	\begin{enumerate}[(a)]
	\item
		$T$ satisfies~\href{ass:CoMu} and~\href{ass:pLaplace}, and the $g$ in~\href{ass:CoMu}
		is locally constant; or
	\item
		$p = 2$, $N \ge 2$, $T$ is a weighted composition operator, and~\href{ass:pLaplace} holds; or
	\item
		$p = 2$, $N \ge 2$, $T$ is a weighted composition operator, and $T$ is isometric from
		$W^{1,p}(\Omega_1)$ to $W^{1,p}(\Omega_2)$; or
	\item
		$p > 2$, $T$ is a weighted composition operator, and~\href{ass:pLaplace} holds; or
	\item
		$p > 2$, $T$ is a weighted composition operator, and $T$ is isometric from
		$W^{1,p}(\Omega_1)$ to $W^{1,p}(\Omega_2)$; or
	\item
		$p > 2$, $T$ is an isometric operator from $W^{1,p}(\Omega_1)$ to $W^{1,p}(\Omega_2)$,
		is order bounded from $W^{1,p}(\Omega_1)$ to $L^p(\Omega_2)$, and satisfies $T(0) = 0$.
	\end{enumerate}
	Then there exists a local isometry $\xi$ from $\Omega_2$ to $\overline{\Omega}_1$ with
	dense image, which is injective apart from a set of measure zero,
	and a locally constant function $g$, which satisfies $|g| = 1$, such that
	$Tu = g \cdot (u \circ \xi)$ almost everywhere for each $u \in W^{1,p}(\Omega_1)$.
\end{theorem}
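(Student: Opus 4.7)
The plan is to reduce each of the six premises to a common setup in which $T$ satisfies \href{ass:CoMu}, \href{ass:pLaplace}, and \href{ass:W1p0}, with the additional property that the function $g$ appearing in \href{ass:CoMu} satisfies $\nabla g = 0$. Once this reduction is accomplished, the stated conclusions follow directly by assembling earlier results of this section: Corollary~\ref{cor:lociso} shows that $\xi$ is locally a rigid motion and hence a local isometry; Proposition~\ref{prop:essbij} produces open sets $U_2 \subset \Omega_2$ and $U_1 \subset \Omega_1$ such that $\xi|_{U_2}\colon U_2 \to U_1$ is bijective, with $\Omega_2 \setminus U_2$ of Lebesgue measure zero and $U_1$ dense in $\overline{\Omega}_1$ (so $\xi$ is injective off a null set and $\xi(\Omega_2)$ is dense in $\overline{\Omega}_1$); and the corollary immediately following that proposition yields $|g| \equiv 1$.

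For premise (a) the reduction is immediate: \href{ass:CoMu}, \href{ass:pLaplace}, \href{ass:W1p0} are postulated, and the assumption that $g$ is locally constant is exactly $\nabla g = 0$. For premises (b) and (d), only the weighted composition form together with \href{ass:pLaplace} and \href{ass:W1p0} is assumed; I would first invoke Proposition~\ref{prop:CoMu} to upgrade the raw representation $Tu = g \cdot (u \circ \xi)$ to the stronger statement \href{ass:CoMu}, thereby obtaining $\mathrm{C}^1$-functions $\xi$ and $g$ with $g$ nowhere vanishing, and then invoke the $p=2$, $N\ge 2$ propositions (which split into the cases $N=2$ and $N\ge 3$, both building on Lemma~\ref{lem:p2prop}) in case (b), or Proposition~\ref{prop:nablanull} in case (d), to conclude $\nabla g = 0$.

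For the isometric premises (c) and (e), Corollary~\ref{cor:intertwine} shows that any linear isometry automatically satisfies \href{ass:pLaplace}, so these reduce to (b) and (d) respectively. For premise (f), $p > 2$ in particular means $p \neq 2$, so with $T$ an order bounded isometry and $T(0)=0$ (which renders the affine-to-linear normalization from the remark after Theorem~\ref{thm:disj} moot), Theorem~\ref{thm:HomCoMu} applies and represents $T$ as a weighted composition operator, thereby reducing (f) to (e).

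Essentially this is a summary theorem, so the main task is careful bookkeeping rather than a new argument; I do not expect a substantive obstacle. The only mildly delicate point is translating the conclusions from $\xi|_{U_2}\colon U_2 \to U_1$ back to $\xi\colon \Omega_2 \to \overline{\Omega}_1$: density of $\xi(\Omega_2)$ in $\overline{\Omega}_1$ is immediate from density of $U_1$ there, and the fact that $\xi$ takes values in $\overline{\Omega}_1$ follows by continuity of $\xi$ together with $\xi(U_2) \subset \Omega_1$ and density of $U_2$ in $\Omega_2$.
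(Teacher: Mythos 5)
Your proof is correct and follows exactly the route the paper intends: the theorem is stated as a summary with no written proof, and the intended argument is precisely your reduction of each premise to the common setting of \href{ass:CoMu}, \href{ass:pLaplace}, \href{ass:W1p0} with $\nabla g = 0$ (via Proposition~\ref{prop:CoMu}, Corollary~\ref{cor:intertwine}, Theorem~\ref{thm:HomCoMu}, and the $\nabla g = 0$ propositions), followed by Corollary~\ref{cor:lociso}, Proposition~\ref{prop:essbij}, and the subsequent corollary. Your handling of the two bookkeeping points (passing from $\xi|_{U_2}$ to $\xi$ on all of $\Omega_2$, and using $T(0)=0$ to secure linearity in case (f)) is also right.
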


\section{Congruence of the Domains and Capacitary Results}\label{sec:cong}
Let $\Omega_1$ and $\Omega_2$ be bounded open subsets of $\mathds{R}^N$.
In this section we only consider operators $T$ from $W^{1,p}(\Omega_1)$ to $W^{1,p}(\Omega_2)$
such that
\begin{equation}\label{eq:Trig}
	\begin{aligned}
		Tu & = g \cdot (u \circ \xi) \text{ almost everywhere for all $u \in W^{1,p}(\Omega_1)$}, \\
			& \quad \text{where $g$ is locally constant, $|g| \equiv 1$, and $\xi$ is locally a rigid motion}.
	\end{aligned}
\end{equation}
Sufficient conditions for~\eqref{eq:Trig} to hold are given in Theorem~\ref{thm:sumrep}.
Moreover, we will usually assume that~\href{ass:W1p0} is satisfied, but mention this
explicitly.

We now look for conditions that guarantee that $\Omega_1$ and $\Omega_2$ are congruent.
Since a local rigid motion is a rigid motion on every connected component,
for connected $\Omega_2$ this is the same
as asking whether $\xi$ is bijective from $\Omega_1$ to $\Omega_2$, i.e., whether
the sets $N_1$ and $N_2$ of Proposition~\ref{prop:essbij} are empty.
To obtain the optimal results in this direction, we use the $p$-capacity $\Capa_p$ to measure
the size of the defects $N_1$ and $N_2$.

\begin{lemma}\label{lem:repquasi}
	Let $T$ be a weighted composition operator from $W^{1,p}(\Omega_1)$ to $W^{1,p}(\Omega_2)$,
	i.e., $Tu = g \cdot (u \circ \xi)$ almost everywhere for all $u \in W^{1,p}(\Omega_1)$.
	Assume that $g$ and $\xi$ are continuous.
	Then for every $p$-quasi continuous $u$ in $W^{1,p}_0(\Omega_1)$ we have
	$Tu = g \cdot (u \circ \xi)$ $p$-quasi everywhere
	if we take the $p$-quasi continuous representative of $Tu$.
	Here we set $u \coloneqq 0$ on $\partial\Omega_1$.
\end{lemma}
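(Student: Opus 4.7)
My plan is to approximate $u \in W^{1,p}_0(\Omega_1)$ by a sequence of test functions and transport the pointwise information to $\Omega_2$ via $\xi$. Concretely, I would choose $(u_n)_n \subset \mathcal{D}(\Omega_1)$ with $u_n \to u$ in $W^{1,p}(\Omega_1)$; by Lemma~\ref{lem:cont} the operator $T$ is continuous, so $Tu_n \to Tu$ in $W^{1,p}(\Omega_2)$. Each $u_n$, extended by $0$ to $\partial\Omega_1$, is continuous on $\overline{\Omega}_1$, so $g \cdot (u_n \circ \xi)$ is a continuous function on $\Omega_2$. Since it agrees almost everywhere with $Tu_n$, it is the continuous, and hence the $p$-quasi continuous, representative of $Tu_n$.

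Next, I would invoke~\cite[Lemma~2.19]{MZ97} twice to pass to a common subsequence along which $g \cdot (u_n \circ \xi) \to \widetilde{Tu}$ $p$-quasi everywhere on $\Omega_2$ and $u_n \to u$ $p$-quasi everywhere on $\Omega_1$, where $\widetilde{Tu}$ denotes the $p$-quasi continuous representative of $Tu$. Let $E \subset \Omega_1$ be a $p$-polar exceptional set for the second convergence. For $y \in \Omega_2$ with $\xi(y) \in \Omega_1 \setminus E$, clearly $u_n(\xi(y)) \to u(\xi(y))$; for $y$ with $\xi(y) \in \partial\Omega_1$, both sides vanish for $n$ large because each $u_n$ has compact support in $\Omega_1$ and by the stated convention. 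Thus convergence may fail only on $\xi^{-1}(E) \cap \Omega_2$.

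The crux of the proof, and the step where mere continuity of $\xi$ would not be enough, is to show that $\xi^{-1}(E) \cap \Omega_2$ is $p$-polar. This I would obtain from the standing assumption~\eqref{eq:Trig} of Section~\ref{sec:cong}: every point of $\Omega_2$ has an open neighborhood $V$ on which $\xi$ coincides with the restriction of a euclidean isometry of $\mathds{R}^N$. Since such isometries preserve the $p$-capacity, $\xi^{-1}(E) \cap V$ is $p$-polar. Covering $\Omega_2$ by countably many such $V$'s, $\xi^{-1}(E) \cap \Omega_2$ is a countable union of $p$-polar sets, hence itself $p$-polar. Combining this with the previous paragraph, $g \cdot (u_n \circ \xi) \to g \cdot (u \circ \xi)$ $p$-quasi everywhere on $\Omega_2$, and identifying this limit with $\widetilde{Tu}$ yields the claim.
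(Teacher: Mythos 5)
Your argument is essentially correct, but it takes a genuinely different route from the paper, and it proves a formally narrower statement. The paper does not approximate at all: it invokes the representation theorem \cite[Theorem~4.5]{Bie08f}, which already yields $Tu = h\cdot(u\circ\zeta)$ $p$-quasi everywhere for all $u$ in the closure of $W^{1,p}(\Omega_1)\cap\mathrm{C}(\overline{\Omega}_1)$ (hence for all $u\in W^{1,p}_0(\Omega_1)$) with \emph{some} quasi-continuous $h,\zeta$, and then identifies $h=g$ and $\zeta=\xi$ $p$-quasi everywhere by testing against $u_0\equiv 1$ and the coordinate functions $u_j(x)=x_j$. That argument needs only the continuity of $g$ and $\xi$ assumed in the statement. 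Your approximation argument is more elementary and self-contained, but, as you yourself flag, its crucial step --- that $\xi^{-1}(E)\cap\Omega_2$ is $p$-polar for $p$-polar $E$ --- requires the extra structure of~\eqref{eq:Trig}, namely that $\xi$ is locally a euclidean isometry (so that preimages of $p$-polar sets are $p$-polar via a countable cover). Hence you establish the lemma only under the section's standing assumption~\eqref{eq:Trig}, not under the stated hypothesis of mere continuity of $g$ and $\xi$; since the lemma's only use is in Proposition~\ref{prop:N2polar}, where~\eqref{eq:Trig} holds, this is adequate for the paper's purposes, but it is worth being explicit that you are strengthening the hypotheses. Two small points: Lemma~\ref{lem:cont} gives $Tu_n\to Tu$ only in $W^{1,p}_{\mathrm{loc}}(\Omega_2)$, not in $W^{1,p}(\Omega_2)$ --- this still suffices for extracting a $p$-quasi everywhere convergent subsequence, exactly as in the proof of Proposition~\ref{prop:CoMu} --- and you should note that $\xi(\Omega_2)\subset\overline{\Omega}_1$ (which follows from continuity of $\xi$ and $\xi(y)\in\Omega_1$ a.e.) so that the case distinction $\xi(y)\in\Omega_1\setminus E$ versus $\xi(y)\in\partial\Omega_1$ is exhaustive off the exceptional set.
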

\begin{proof}
	Let $\mathscr{W}^{1,p}(\Omega_1)$ denote the closure of
	$W^{1,p}(\Omega_1) \cap \mathrm{C}(\overline{\Omega})$ in $W^{1,p}(\Omega_1)$.
	There exist $\zeta$ and $h$ such that
	$Tu = h \cdot (u \circ \zeta)$ $p$-quasi everywhere for the $p$-quasi continuous
	representatives whenever $u \in \mathscr{W}^{1,p}(\Omega_1)$~\cite[Theorem~4.5]{Bie08f},
	in particular for $u$ in $W^{1,p}_0(\Omega_1)$.
	Note that in~\cite{Bie08f} functions in $\mathscr{W}^{1,p}(\Omega_1)$
	are defined even on $\overline{\Omega}_1$ and we can set $u = 0$ on
	$\partial\Omega_1$ for $u \in W^{1,p}_0(\Omega_1)$.

	It suffices to show that $h = g$ and $\xi = \zeta$ $p$-quasi everywhere.
	For this, consider $u_0(x) \coloneqq 1$ and $u_j(x) \coloneqq x_j$, which are
	continuous functions in $\mathscr{W}^{1,p}(\Omega_1)$. Since $g$
	is a continuous representative of $Tu_0$, $g = h \cdot (u_0 \circ \zeta) = h$
	$p$-quasi everywhere.
	Since also $g \, \xi_j$ is continuous,
	$g \, \xi_j = Tu_j = h \cdot (u_j \circ \zeta) = h \, \zeta_j$
	$p$-quasi everywhere for every $j$. Since $g=h$ $p$-quasi everywhere,
	we can divide by $g$ and obtain that $\xi_j = \zeta_j$ $p$-quasi everywhere,
	i.e., $\xi = \zeta$ $p$-quasi everywhere.
\end{proof}


\begin{proposition}\label{prop:N2polar}
	Let $T$ satisfy~\eqref{eq:Trig} and~\href{ass:W1p0}.
	Then the set $N_2$ of Proposition~\ref{prop:essbij} is $p$-polar.
\end{proposition}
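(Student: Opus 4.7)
The plan is to mimic the capacitary endgame of Proposition~\ref{prop:CoMu}: show that every function in $W^{1,p}_0(\Omega_2)$ vanishes $p$-quasi everywhere on the relatively closed set $N_2$, and then conclude $p$-polarity via \cite[Lemma~2.26]{MZ97} and \cite[Theorem~2.15]{MZ97}.

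First I would pick an arbitrary $v \in W^{1,p}_0(\Omega_2)$, represented by its $p$-quasi continuous version. Using~\href{ass:W1p0}, lift it to some $u \in W^{1,p}_0(\Omega_1)$ with $Tu = v$, and extend the $p$-quasi continuous representative $\tilde u$ of $u$ by zero to $\mathbb{R}^N \setminus \Omega_1$; this extension still vanishes $p$-quasi everywhere on $\mathbb{R}^N \setminus \Omega_1$. Lemma~\ref{lem:repquasi}, which is exactly tailored to this situation now that $g$ and $\xi$ are continuous by~\eqref{eq:Trig}, then gives
\[
	v = g \cdot (\tilde u \circ \xi) \quad p\text{-quasi everywhere on } \Omega_2.
\]

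Next I would analyze what $\xi$ does to $N_2$. Since $\xi(y) \in \Omega_1$ for almost every $y \in \Omega_2$ and $\xi$ is continuous, $\xi(\Omega_2) \subset \overline{\Omega}_1$; hence $y \in N_2$ forces $\xi(y) \in \partial\Omega_1 \subset \mathbb{R}^N \setminus \Omega_1$, where $\tilde u$ vanishes $p$-quasi everywhere. To transport this vanishing back to $\Omega_2$, I would use that $\xi$ is locally a rigid motion: on each (at most countably many) connected component of $\Omega_2$, $\xi$ extends to an isometry of $\mathbb{R}^N$, and the $p$-capacity is invariant under translations and rotations, so the preimage under $\xi$ of a $p$-polar set is $p$-polar. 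Combined with $|g| \equiv 1$, this yields $v = 0$ $p$-quasi everywhere on $N_2$.

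Finally, with every $v \in W^{1,p}_0(\Omega_2)$ vanishing $p$-quasi everywhere on the relatively closed set $N_2$, the restriction of $v$ to $U_2 = \Omega_2 \setminus N_2$ lies in $W^{1,p}_0(U_2)$ by~\cite[Lemma~2.26]{MZ97}. Since this happens for every element of $W^{1,p}_0(\Omega_2)$, \cite[Theorem~2.15]{MZ97} identifies $N_2$ as $p$-polar. The step I expect to be the main obstacle is justifying cleanly that preimages of $p$-polar sets under $\xi$ remain $p$-polar, because $\Omega_2$ need not be connected and $\xi$ is only piecewise a rigid motion; fortunately, $p$-polarity is countably stable and rigid-motion invariant, so the argument comes down to a componentwise reduction.
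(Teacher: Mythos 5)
Your proposal is correct and follows essentially the same route as the paper: lift $v$ via (H\ref{ass:W1p0}), apply Lemma~\ref{lem:repquasi} to get $v = g\cdot(u\circ\xi)$ $p$-quasi everywhere, conclude $v=0$ $p$-quasi everywhere on the relatively closed set $N_2$, and finish with \cite[Lemma~2.26]{MZ97} and \cite[Theorem~2.15]{MZ97}. The capacity-transport step you flag as the main obstacle is actually not needed: Lemma~\ref{lem:repquasi} fixes the representative of $u$ to be identically zero on $\partial\Omega_1$, and since $\xi(N_2)\subset\partial\Omega_1$ pointwise, $u\circ\xi$ vanishes everywhere on $N_2$ without any invariance-of-polarity argument.
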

\begin{proof}
	Set $U_2 \coloneqq \Omega_2 \setminus N_2$.
	Let $v \in W^{1,p}_0(\Omega_2)$ be $p$-quasi continuous.
	By~\href{ass:W1p0} and Lemma~\ref{lem:repquasi} we have
	$v = g \cdot (u \circ \xi)$ $p$-quasi everywhere for some $p$-quasi continuous
	function $u \in W^{1,p}_0(\Omega_1)$, so in particular
	$v = 0$ $p$-quasi everywhere on $N_2$.
	Thus the restriction of $v$ to $U_2$ is in
	$W^{1,p}_0(U_2)$~\cite[Lemma~2.26]{MZ97}.
	Hence $N_2 = \Omega_2 \setminus U_2$ is $p$-polar~\cite[Theorem~2.15]{MZ97}.
%
\end{proof}

An almost trivial example shows that we cannot expect $N_1$ to be $p$-polar, even
if $\Omega_1$ and $\Omega_2$ are very smooth.
Here we give a counterexample with Lipschitz regular sets. A very similar,
but more technical construction would provide a counterexample with $\mathrm{C}^\infty$-regular sets.
Note, however, that in this example $\Omega_2$ is not connected and
$W^{1,p}_0(\Omega_2) \subsetneqq TW^{1,p}_0(\Omega_1)$.
\begin{example}\label{ex:disconn}
	Let $\Omega_1 \coloneqq (0,1) \times (-1,1)$ and $\Omega_2 \coloneqq (0,1) \times \bigl( (-2,-1) \cup (1,2) \bigr)$.
	Define $Tu \coloneqq u \circ \xi$ for $\xi(x,y) \coloneqq (x, y - \sgn y)$.
	Then $T$ is an isometric lattice homomorphism from $W^{1,p}(\Omega_1)$ to $W^{1,p}(\Omega_2)$
	such that $W^{1,p}_0(\Omega_2) \subset TW^{1,p}_0(\Omega_1)$ for every $p \in (1,\infty)$.
	Thus $T$ satisfies~\href{ass:CoMu}, \href{ass:pLaplace}, \href{ass:W1p0}, and~\eqref{eq:Trig}.
	The sets of Proposition~\ref{prop:essbij} are in this case
	$N_2 = \emptyset$ and $N_1 = (0,1) \times \{0\}$. So $N_1$ is not $p$-polar
	for any $p > 1$.
\end{example}

On the other hand, the combination of some mild regularity assumptions and connectedness
suffices for the defects $N_1$ and $N_2$ to be empty. An appropriate regularity condition is the following,
compare also Remark~\ref{rem:regcap}.
\begin{definition}\label{def:regcap}
	An open set $\Omega \subset \mathds{R}^N$ is called \emph{regular in $p$-capacity}
	if for every $z \in \partial\Omega$ and every $r > 0$ the set
	$B(z,r) \setminus \Omega$ has positive $p$-capacity, i.e., is not $p$-polar.
\end{definition}
\begin{remark}\label{rem:topreg}
	Regularity in $p$-capacity is not a restrictive assumption.
	For example, if a set is topologically regular, i.e., if $\Omega$ is
	the interior of $\overline{\Omega}$, then $\Omega$ is regular in
	in $p$-capacity for each $p > 1$. In particular smooth domains, e.g.\ domains
	with continuous boundary, are topologically regular and hence regular in $p$-capacity.
\end{remark}

\begin{proposition}\label{prop:regcap}
	Let $T$ satisfy~\eqref{eq:Trig} and~\href{ass:W1p0}.
	If $\Omega_1$ is regular in $p$-capacity,
	then the set $N_2$, defined as in Proposition~\ref{prop:essbij}, is empty.
\end{proposition}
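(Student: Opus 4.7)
The plan is to argue by contradiction: suppose some $y_0 \in N_2$ exists, and use the local isometric nature of $\xi$ together with the regularity of $\Omega_1$ to produce a subset of $N_2$ of positive $p$-capacity, contradicting Proposition~\ref{prop:N2polar}.

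First I would fix $y_0 \in N_2$ and locate $\xi(y_0)$ on $\partial\Omega_1$. Since $\xi$ is continuous on $\Omega_2$ (being locally a rigid motion) and maps almost every point of $\Omega_2$ into $\Omega_1$, by continuity and density $\xi(\Omega_2) \subset \overline{\Omega}_1$; as $y_0 \notin U_2 = \xi^{-1}(\Omega_1)$, the image $z := \xi(y_0)$ must lie in $\partial\Omega_1$.

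Next I would choose $r>0$ small enough that $B(y_0,r) \subset \Omega_2$ and $\xi$ restricted to $B(y_0,r)$ is a genuine rigid motion, which is possible because $\xi$ is locally a rigid motion by~\eqref{eq:Trig}. Then $\xi$ is an isometric bijection from $B(y_0,r)$ onto $B(z,r)$, and in particular
\[
    \xi\bigl(B(y_0,r) \cap N_2\bigr) = B(z,r) \setminus \Omega_1.
\]
By the assumed regularity of $\Omega_1$ in $p$-capacity, the right-hand side has strictly positive $p$-capacity. Since rigid motions are isometries of $\mathds{R}^N$ and the $p$-capacity is invariant under such transformations, this forces
\[
    \Capa_p\bigl(B(y_0,r) \cap N_2\bigr) = \Capa_p\bigl(B(z,r) \setminus \Omega_1\bigr) > 0,
\]
contradicting Proposition~\ref{prop:N2polar}, which asserts that $N_2$ is $p$-polar. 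Hence $N_2 = \emptyset$.

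The main obstacle, or rather the only nontrivial step, is justifying that $\xi(B(y_0,r)) = B(z,r)$ exactly; this requires shrinking $r$ so that $\xi$ is an actual (not merely local) rigid motion on the ball, which follows from the local definition of rigid motion used in Corollary~\ref{cor:lociso}. Everything else is bookkeeping together with translation and rotation invariance of the $p$-capacity.
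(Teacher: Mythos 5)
Your argument is correct and is essentially the paper's own proof: both proceed by contradiction from a point $y_0$ with $\xi(y_0)\in\partial\Omega_1$, use that $\xi$ maps a small ball around $y_0$ onto a ball around $\xi(y_0)$, invoke regularity in $p$-capacity of $\Omega_1$ to get positive capacity of $B(\xi(y_0),r)\setminus\Omega_1$, and contradict Proposition~\ref{prop:N2polar}. You merely spell out the capacity invariance under rigid motions a bit more explicitly than the paper, which phrases the same step via openness of $\xi$ and the statement that $\xi(y)\in\Omega_1$ for $p$-quasi every $y$.
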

\begin{proof}
	Assume to the contrary that there exists $y_0 \in \Omega_2$ such that $\xi(y_0) \in \partial\Omega_1$.
	Since $\xi$ is open, there exists $r > 0$ such that
	$B(\xi(y_0),r) \subset \xi(\Omega_2)$. Hence $\xi(\Omega_2) \setminus \Omega_1$
	has positive $p$-capacity by assumption, contradicting the fact that by Proposition~\ref{prop:N2polar}
	$\xi(y) \in \Omega_1$ for $p$-quasi every $y \in \Omega_2$.
\end{proof}

\begin{theorem}\label{thm:conn}
	Let $T$ satisfy~\eqref{eq:Trig} and~\href{ass:W1p0}.
	Let $\Omega_1$ be regular in $p$-capacity, and let $\Omega_2$ be topologically regular and connected.
	Then $\xi$ is bijective from $\Omega_2$ to $\Omega_1$, and the sets $\Omega_1$ and $\Omega_2$ are congruent.
\end{theorem}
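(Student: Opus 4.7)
The plan is to combine essentially all of the machinery already developed so far in this section. First, apply Proposition~\ref{prop:regcap} (using the hypothesis that $\Omega_1$ is regular in $p$-capacity) to conclude that the set $N_2$ of Proposition~\ref{prop:essbij} is empty, so that $\xi$ actually maps every point of $\Omega_2$ into $\Omega_1$ and the set $U_2$ from that proposition coincides with all of $\Omega_2$. Next, since $\Omega_2$ is connected and $\xi$ is locally a rigid motion by~\eqref{eq:Trig}, Corollary~\ref{cor:lociso} allows us to extend $\xi$ to a genuine rigid motion $\tilde\xi$ of $\mathds{R}^N$; in particular $\tilde\xi$ is a global homeomorphism that sends $\Omega_2$ onto the open set $U_1 = \xi(\Omega_2) \subset \Omega_1$, and is bijective from $\Omega_2$ to $U_1$.

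The only thing left to verify is that $U_1 = \Omega_1$, equivalently that $N_1 = \Omega_1 \setminus U_1 = \emptyset$. From Proposition~\ref{prop:essbij} we already know that $N_1$ has empty interior, i.e.\ $U_1$ is dense in $\Omega_1$; in particular $\Omega_1 \subset \overline{U_1} = \overline{\tilde\xi(\Omega_2)}$. The key new ingredient is the topological regularity of $\Omega_2$, which says $\Omega_2 = \operatorname{int}(\overline{\Omega_2})$. Since $\tilde\xi$ is a homeomorphism of $\mathds{R}^N$, this transfers to $\tilde\xi(\Omega_2) = \operatorname{int}(\overline{\tilde\xi(\Omega_2)})$.

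Now suppose for contradiction that some $x \in \Omega_1 \setminus \tilde\xi(\Omega_2)$ exists. Choose $r > 0$ so that the open ball $B(x,r)$ is contained in $\Omega_1$. Then $B(x,r) \subset \Omega_1 \subset \overline{\tilde\xi(\Omega_2)}$, and since $B(x,r)$ is open this forces $B(x,r) \subset \operatorname{int}(\overline{\tilde\xi(\Omega_2)}) = \tilde\xi(\Omega_2)$. But $x \in B(x,r)$, contradicting $x \notin \tilde\xi(\Omega_2)$. Hence $N_1 = \emptyset$, and $\xi$ is a bijective rigid motion from $\Omega_2$ onto $\Omega_1$, realizing the desired congruence.

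I do not expect a substantial obstacle: the work was already done in Propositions~\ref{prop:essbij}, \ref{prop:N2polar} and~\ref{prop:regcap}, and in Corollary~\ref{cor:lociso}. The only delicate point is the elementary topological step in the last paragraph, where one must recognize that density of $U_1$ in $\Omega_1$ together with topological regularity of $\Omega_2$ (pushed forward by the homeomorphism $\tilde\xi$) is exactly the right tool to promote ``no interior points'' to ``empty''. No capacitary argument is needed for this last step; the capacity hypothesis on $\Omega_1$ is used solely through Proposition~\ref{prop:regcap} to handle $N_2$.
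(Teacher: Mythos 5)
Your proof is correct and follows essentially the same route as the paper: extend $\xi$ to a rigid motion of $\mathds{R}^N$ via Corollary~\ref{cor:lociso}, use Proposition~\ref{prop:regcap} to get $\xi(\Omega_2)\subset\Omega_1$, the density statement from Proposition~\ref{prop:essbij}, and then push the topological regularity of $\Omega_2$ through the homeomorphism to conclude $\Omega_1\subset\operatorname{int}\bigl(\overline{\xi(\Omega_2)}\bigr)=\xi(\Omega_2)$. Your final paragraph is just an unwound version of the paper's two-line ``interior of the closure'' argument.
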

\begin{proof}
	Since $\xi$ is a rigid motion, it can be
	extended to an isometric automorphism of $\mathds{R}^N$,
	which we again denote by $\xi$.
	As in the proof of Proposition~\ref{prop:essbij}, $U_1 \coloneqq \xi(\Omega_2)$
	is dense in $\overline{\Omega}_1$.
	By Proposition~\ref{prop:regcap}, $U_1 \subset \Omega_1$.
	Since $U_1$ and $\Omega_2$ are congruent,
	we only have to show that $U_1 = \Omega_1$.

	Since $\xi$ maps $\Omega_2$ onto $U_1$, $\xi$ maps $\overline{\Omega}_2$ onto $\overline{\Omega}_1$.
	Since $\xi$ is an homeomorphism of $\mathds{R}^N$,
	it maps the interior of $\overline{\Omega}_2$, which is $\Omega_2$,
	onto the interior of $\overline{\Omega}_1$, which contains $\Omega_1$.
	Thus $\Omega_1 \subset U_1 = \xi(\Omega_2)$, i.e., $U_1 = \Omega_1$.
\end{proof}

The statement of Theorem~\ref{thm:conn} becomes false if we assume $\Omega_2$ only to
be regular in $p$-capacity, even if $T$ is an isometric isomorphism from $W^{1,p}(\Omega_1)$
to $W^{1,p}(\Omega_2)$. However, the counterexample that we will give now is more
involved than Example~\ref{ex:disconn}.
\begin{example}
	Let $N \ge 2$ and $1 < p < N$. Let $M \subset \mathds{R}^N$ be a compact set of Hausdorff dimension
	$s \in (N-p, N-1)$, and assume without loss of generality that $M \subset (0,1)^N$.
	The existence of such a set can be obtained by the theory of fractals.
	In fact, let $s \in (N-p, N-1)$ be arbitrary. Pick $k$ so large that $k$
	disjoint copies $(B_i)_{i=1}^k$ of an $N$-dimensional cube of edge length $k^{-1/s}$ fit into the unit cube,
	and let $\phi_i$ denote the affine linear mapping that maps the unit cube onto $B_i$.
	Then the unique non-empty, compact, $(\phi_i)_{i=1}^k$-invariant
	set $M$ has Hausdorff dimension $s$~\cite[Theorem~6.5.4]{Edg08}.

	Define $\Omega_1 \coloneqq (0,1)^N$ and $V_2 \coloneqq (0,1)^N \setminus M$.
	Denote by $\Omega_2$ the open superset of $V_2$ which is regular in $p$-capacity
	and satisfies $\Capa_p(\Omega_2 \triangle V_2) = 0$~\cite[Proposition~3.2.6]{Bie05}.
	Then $(0,1)^N \setminus \Omega_2$ is a subset of $M$, hence has zero
	$(N-1)$-dimensional Hausdorff measure. Thus $\Omega_2$ is connected by Lemma~\ref{lem:capconn}.
	Moreover, $\Omega_2 \neq (0,1)^N$ since $M$ is not $p$-polar~\cite[Theorem~2.53]{MZ97}.

	Finally, let $Tu \coloneqq u \circ \xi$ for $\xi \coloneqq \id|_{\Omega_2}$.
	Then $T$ is an operator from $W^{1,p}(\Omega_1)$ to $W^{1,p}(\Omega_2)$.
	Since $M$ is closed with zero $(N-1)$-dimensional Hausdorff measure, the restriction to $V_2$
	is an isometric isomorphism from $W^{1,p}( (0,1)^N )$ to $W^{1,p}(V_2)$~\cite[\S 1.2.5]{MP97}.
	Since $V_2 \subset \Omega_2 \subset (0,1)^N$, the restriction to $\Omega_2$
	is an isometric isomorphism from $W^{1,p}( (0,1)^N )$ to $W^{1,p}(\Omega_2)$.
	Hence $T$ is an isometric isomorphism from $W^{1,p}(\Omega_1)$ to $W^{1,p}(\Omega_2)$
	such that $W^{1,p}_0(\Omega_2) \subset W^{1,p}_0( (0,1)^N ) = TW^{1,p}_0(\Omega_1)$.
	Moreover, $T$ satisfies~\href{ass:pLaplace} since $\Omega_1$ and $\Omega_2$ differ only
	by a set of Lebesgue measure zero.

	Thus $T$ satisfies~\href{ass:CoMu}, \href{ass:pLaplace}, \href{ass:W1p0}, and~\eqref{eq:Trig}.
	The sets $\Omega_1$ and $\Omega_2$ are regular in $p$-capacity by construction. However, the
	set $N_1$ of Proposition~\ref{prop:essbij}, which is $N_1 = (0,1)^N \setminus \Omega_2$, is not $p$-polar.
\end{example}

On the other hand, if we have some more information about $T$ as an operator, then
regularity in $p$-capacity suffices for $\Omega_2$ as well.
\begin{theorem}\label{thm:W1p0eq}
	Let $T$ satisfy~\eqref{eq:Trig}.
	Let $\Omega_1$ and $\Omega_2$ be regular in $p$-capacity.
	Assume that $T$ is bijective from $W^{1,p}_0(\Omega_1)$ onto $W^{1,p}_0(\Omega_2)$.
	Then $\xi$ is bijective from $\Omega_2$ onto $\Omega_1$.
\end{theorem}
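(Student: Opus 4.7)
The plan is to first reduce to showing $N_1 := \Omega_1 \setminus \xi(\Omega_2) = \emptyset$, then exploit the $p$-capacity regularity of $\Omega_2$ on a component-by-component basis via a test-function argument. Since $T$ is bijective from $W^{1,p}_0(\Omega_1)$ onto $W^{1,p}_0(\Omega_2)$, assumption~\href{ass:W1p0} holds, so Proposition~\ref{prop:regcap} together with the $p$-capacity regularity of $\Omega_1$ yields $N_2 = \emptyset$. By Proposition~\ref{prop:essbij}, $\xi$ is then injective from $\Omega_2$ onto $U_1 := \xi(\Omega_2) \subset \Omega_1$, so only $U_1 = \Omega_1$ remains to be shown.

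To attack this, I would decompose $\Omega_2 = \bigsqcup_\alpha \Omega_2^\alpha$ into connected components, extend each $\xi|_{\Omega_2^\alpha}$ to a rigid motion $\hat\xi^\alpha$ of $\mathds{R}^N$, set $U_1^\alpha := \hat\xi^\alpha(\Omega_2^\alpha)$, and write $g^\alpha \in \{\pm 1\}$ for the constant value of $g$ on $\Omega_2^\alpha$. The heart of the argument is a key claim: for every test function $\phi \in \mathcal{D}(\Omega_1)$ and every $\alpha$, $\phi$ vanishes on $\partial U_1^\alpha$. Once this is in hand, the fact that $\mathcal{D}(\Omega_1)$ separates points of $\Omega_1$ forces $\partial U_1^\alpha \cap \Omega_1 = \emptyset$, so each (connected) $U_1^\alpha$ is both open and relatively closed in $\Omega_1$, hence a connected component of $\Omega_1$. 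Thus $N_1$ is a union of open connected components of $\Omega_1$, and since $N_1$ has empty interior by Proposition~\ref{prop:essbij}, it must be empty.

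The main obstacle is proving the key claim, and this is where the $p$-capacity regularity of $\Omega_2$ and the quasi-continuity machinery enter. Given $\phi \in \mathcal{D}(\Omega_1)$, the orthogonal decomposition $W^{1,p}_0(\Omega_2) = \bigoplus_\beta W^{1,p}_0(\Omega_2^\beta)$ together with the formula $T\phi = g \cdot (\phi \circ \xi)$ shows that $T\phi|_{\Omega_2^\alpha} = g^\alpha \phi \circ \hat\xi^\alpha|_{\Omega_2^\alpha}$ belongs to $W^{1,p}_0(\Omega_2^\alpha)$ and is the restriction of a smooth function on $\mathds{R}^N$. Suppose for contradiction that $\phi(x_0) \neq 0$ for some $x_0 \in \partial U_1^\alpha \cap \Omega_1$, and let $y_0 := (\hat\xi^\alpha)^{-1}(x_0) \in \partial\Omega_2^\alpha \subset \partial\Omega_2$. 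The $p$-quasi continuous representative $f$ of the zero extension of $T\phi$ to $\mathds{R}^N$ tends to $g^\alpha \phi(x_0) \neq 0$ as $y \to y_0$ through $\Omega_2^\alpha$, yet vanishes $p$-quasi everywhere on $\mathds{R}^N \setminus \Omega_2$. Regularity of $\Omega_2$ in $p$-capacity gives $\Capa_p(B(y_0,\rho) \setminus \Omega_2) > 0$ for every $\rho > 0$, so fixing an exceptional open set $O$ of sufficiently small $p$-capacity, one can pick $y_1 \in B(y_0,\rho) \cap \Omega_2^\alpha \setminus O$ (where $f \approx g^\alpha \phi(x_0)$) and $y_2 \in B(y_0,\rho) \setminus \Omega_2 \setminus O$ (where $f = 0$) arbitrarily close to $y_0$, contradicting the continuity of $f$ on $\mathds{R}^N \setminus O$ once $\rho$ is chosen below the relevant continuity modulus.
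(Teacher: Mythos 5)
Your reduction is sound up to the key claim: $N_2=\emptyset$ via Proposition~\ref{prop:regcap}, injectivity of $\xi$ as in Proposition~\ref{prop:essbij}, the clopen/component argument showing that the key claim implies $N_1=\emptyset$ (the empty interior of $N_1$ follows here directly from injectivity of $T$, since $T\phi=0$ for $\phi\in\mathcal{D}(N_1^\circ)$), and the observation that the zero extension of $T\phi\in W^{1,p}_0(\Omega_2)$ has a $p$-quasi continuous representative $f$ vanishing $p$-quasi everywhere off $\Omega_2$ and agreeing with the smooth function $g^\alpha(\phi\circ\hat\xi^\alpha)$ on $\Omega_2^\alpha$. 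The gap is in the final contradiction. First, the argument is circular: to find $y_2\in B(y_0,\rho)\setminus\Omega_2$ outside the exceptional open set $O$ you need $\Capa_p(O)<\Capa_p(B(y_0,\rho)\setminus\Omega_2)$, a quantity that may tend to $0$ as $\rho\to 0$, while $\rho$ must be chosen \emph{after} $O$ to beat the modulus of continuity of $f$ on the compact set $\overline{B(y_0,1)}\setminus O$. Second, and more seriously, the principle you are invoking is false for $1<p\le N$: a quasi-continuous $W^{1,p}$ function can vanish $p$-quasi everywhere on a set of positive capacity in every ball around $y_0$ and still be bounded away from zero on an open set accumulating at $y_0$. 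For example, in $\mathds{R}^2$ with $1<p<2$, take two closed sectors with common vertex $y_0$ whose closures meet only at $y_0$, and a function equal to $1$ on one, $0$ on the other, interpolated smoothly in the remaining angular region; its gradient is $O(|x-y_0|^{-1})$, so it lies in $W^{1,p}$ and is continuous off the polar set $\{y_0\}$, hence quasi-continuous, yet no contradiction arises. So quasi-continuity plus regularity of $\Omega_2$ in $p$-capacity does not by itself force $\phi(x_0)=0$; one would need the finer statement that $\Omega_2^\alpha$ is non-thin at a non-polar set of points of $\complement\Omega_2$ near $y_0$ (fine continuity plus the Kellogg property), and that is exactly the nontrivial capacitary content you cannot get for free.

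The paper sidesteps all of this and, unlike your argument, uses the surjectivity and injectivity of $T$ between the $W^{1,p}_0$-spaces in an essential way: since $\xi$ is a diffeomorphism from $\Omega_2$ onto $U_1$, $T$ maps $W^{1,p}_0(U_1)$ onto $W^{1,p}_0(\Omega_2)$; given $\phi\in W^{1,p}_0(\Omega_1)$ there is then $\tilde\phi\in W^{1,p}_0(U_1)$ with $T\tilde\phi=T\phi$, and injectivity yields $\phi=\tilde\phi$, whence $W^{1,p}_0(\Omega_1)=W^{1,p}_0(U_1)$; the conclusion $U_1=\Omega_1$ then follows from the cited uniqueness theorem for sets regular in $p$-capacity with equal $W^{1,p}_0$-spaces (\cite[Theorem~3.2.15]{Bie05}). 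You should either route your key claim through that theorem (which encapsulates precisely the capacitary fact your last step needs) or adopt the paper's algebraic argument; as written, the crucial step does not close.
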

\begin{proof}
	Let $U_1 \coloneqq \xi(\Omega_2)$. Then $U_1$ is a subset of $\Omega_1$ by Proposition~\ref{prop:regcap},
	so $W^{1,p}_0(U_1)$ is in a natural way a subspace of $W^{1,p}(\Omega_1)$.
	Since $\xi$ is a smooth diffeomorphism from $\Omega_2$ to $U_1$,
	$T$ maps $\mathcal{D}(U_1)$ onto $\mathcal{D}(\Omega_2)$, so it maps $W^{1,p}_0(U_1)$ isometrically
	onto $W^{1,p}_0(\Omega_2)$.
	Let $\phi \in W^{1,p}_0(\Omega_1)$ be arbitrary. Then $T\phi \in W^{1,p}_0(\Omega_2)$ by assumption.
	Hence there exists $\tilde{\phi}$ in $W^{1,p}_0(U_1)$, which is in a natural way a subset of
	$W^{1,p}_0(\Omega_1)$, such that
	$T\tilde{\phi} = T\phi$. But then we have even
	$\phi = \tilde{\phi} \in W^{1,p}_0(U_1)$ since $T$ is injective.
	This shows $W^{1,p}_0(\Omega_1) = W^{1,p}_0(U_1)$ as subspaces of $W^{1,p}(\mathds{R}^N)$,
	hence $U_1 = \Omega_1$ by~\cite[Theorem~3.2.15]{Bie05}.
\end{proof}

\section{Applications}\label{sec:app}

In this section we collect our results into some theorems that
treat several typical situations. Different versions of these theorems
can easily be constructed from the previous results.

For all the theorems, $\Omega_1$ and $\Omega_2$ are bounded open subsets of
$\mathds{R}^N$, $p$ is in $(1,\infty)$, and $T$ is an operator from $W^{1,p}(\Omega_1)$ to
$W^{1,p}(\Omega_2)$.

\begin{theorem}\label{thm:isorig}
	Let $p=2$ and $N \ge 2$, or let $p > 2$.
	Let $T$ be an isometric lattice homomorphism such that $W^{1,p}_0(\Omega_2) \subset TW^{1,p}_0(\Omega_1)$.
	Then there exists a local rigid motion $\xi$ on $\Omega_2$ such that
	$Tu = u \circ \xi$ holds almost everywhere for all $u \in W^{1,p}(\Omega_1)$.
\end{theorem}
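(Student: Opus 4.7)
The plan is to reduce this theorem directly to Theorem~\ref{thm:sumrep} by verifying its hypotheses and then using the lattice homomorphism property to eliminate the sign ambiguity in $g$.

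First I would check that the hypotheses of Theorem~\ref{thm:sumrep} hold in each of the two cases. A lattice homomorphism is automatically positive (since $Tu = T(u^+) - T(u^-) = (Tu)^+ - (Tu)^-$ forces $Tu \ge 0$ when $u \ge 0$), linear, and hence order bounded with $T(0) = 0$. For $p > 2$, these observations together with the isometry assumption place us directly in case~(f) of Theorem~\ref{thm:sumrep}. For $p=2$ and $N \ge 2$, I would first invoke the representation theorem for lattice homomorphisms between Sobolev spaces~\cite[Theorem~4.13]{Bie08f} (applied with $L^p(\Omega_2)$ as the target, which suffices since $W^{1,p}(\Omega_2) \hookrightarrow L^p(\Omega_2)$ continuously) to conclude that $T$ is a weighted composition operator. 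Combined with $T$ being isometric, this places us in case~(c) of Theorem~\ref{thm:sumrep}.

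Applying Theorem~\ref{thm:sumrep}, I obtain a local isometry $\xi$ and a locally constant function $g$ with $|g| \equiv 1$ such that $Tu = g \cdot (u \circ \xi)$ almost everywhere for every $u \in W^{1,p}(\Omega_1)$. To identify $g$ exactly, I apply $T$ to $\setone_{\Omega_1} \in W^{1,p}(\Omega_1)$ (which is a valid element since $\Omega_1$ is bounded). On the one hand $T\setone_{\Omega_1} = g \cdot (\setone_{\Omega_1} \circ \xi) = g$ almost everywhere, since $\xi$ takes values in $\overline{\Omega}_1$. On the other hand, positivity of $T$ gives $T\setone_{\Omega_1} \ge 0$ almost everywhere, so $g \ge 0$. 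Combining with $|g| \equiv 1$ yields $g \equiv 1$, and therefore $Tu = u \circ \xi$ almost everywhere. Since a local isometry of $\mathds{R}^N$ is automatically a local rigid motion, the conclusion follows.

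There is essentially no hard step: the theorem is a packaging of the preceding machinery. The only point requiring a little care is the $p=2$ case, where Theorem~\ref{thm:HomCoMu} does not apply (it assumes $p \neq 2$), so one must instead appeal directly to the lattice homomorphism representation result from~\cite{Bie08f} to secure that $T$ is a weighted composition operator before invoking Theorem~\ref{thm:sumrep}.
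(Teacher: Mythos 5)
Your proposal is correct and follows essentially the same route as the paper: reduce to Theorem~\ref{thm:sumrep} (using \cite[Theorem~4.13]{Bie08f} to see that a lattice homomorphism is a weighted composition operator) and then use positivity of $T$ applied to $\setone_{\Omega_1}$ to upgrade $|g|\equiv 1$ to $g\equiv 1$. The only cosmetic difference is that for $p>2$ you invoke case~(f) of Theorem~\ref{thm:sumrep} where the paper uses the weighted-composition cases uniformly for both ranges of $p$; either works.
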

\begin{proof}
	Every lattice homomorphism is a weighted composition operator by~\cite[Theorem~4.13]{Bie08f}.
	Then $Tu = g \cdot (u \circ \xi)$ almost everywhere for all $u \in W^{1,p}(\Omega_1)$
	with a local rigid motion $\xi$ and a locally constant
	function $g$ such that $|g| \equiv 1$ by Theorem~\ref{thm:sumrep}.
	But since $T$ is a positive operator, $g \equiv 1$.
\end{proof}

\begin{theorem}\label{thm:orderrig}
	Let $p > 2$, and let $T$ be isometric. Assume that $T$ is order bounded from
	$W^{1,p}(\Omega_1)$ to $L^p(\Omega_2)$ and that $W^{1,p}_0(\Omega_2) \subset TW^{1,p}_0(\Omega_1)$
	and $T(0) = 0$.
	Then there exists a local rigid motion $\xi$ on $\Omega_2$ and a locally constant function $g$
	on $\Omega_2$ satisfying $|g| \equiv 1$ such that
	$Tu = g \cdot (u \circ \xi)$ holds almost everywhere for all $u \in W^{1,p}(\Omega_1)$.
\end{theorem}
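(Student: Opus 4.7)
The plan is to assemble the conclusion by stitching together previously established results; there is essentially no new argument required, only the verification that each hypothesis is met in the right order. The main work is showing that the three running assumptions \href{ass:CoMu}, \href{ass:pLaplace}, and \href{ass:W1p0} of Section~\ref{sec:intertwining} all hold, after which the structural conclusions about $g$ and $\xi$ follow from the results already proved in that section.

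First I would upgrade the hypothesis ``$T$ is isometric with $T(0)=0$'' to linearity. For $p>1$ the space $W^{1,p}(\Omega_i)$ is strictly convex, so an isometry fixing the origin is automatically linear by the Mazur--Ulam argument cited in the remark following Theorem~\ref{thm:disj} (see~\cite{Vae03}). With $T$ linear, isometric, order bounded from $W^{1,p}(\Omega_1)$ to $L^p(\Omega_2)$, and $p\neq 2$, Theorem~\ref{thm:HomCoMu} applies and yields functions $g$ and $\xi$ with $Tu=g\cdot(u\circ\xi)$ almost everywhere; this is precisely assumption~\href{ass:CoMu} (modulo the regularity of $g$ and $\xi$, which will be supplied shortly by Proposition~\ref{prop:CoMu}). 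Corollary~\ref{cor:intertwine} provides the intertwining identity $\mathfrak{a}_{p,\Omega_2}(Tu,Tv)=\mathfrak{a}_{p,\Omega_1}(u,v)$, i.e.\ assumption~\href{ass:pLaplace}, and \href{ass:W1p0} is part of the hypothesis.

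At this point Proposition~\ref{prop:CoMu} promotes $g$ and $\xi$ to continuously differentiable representatives with $g$ nowhere vanishing, so \href{ass:CoMu} is fully in force. Since $p>2$, Proposition~\ref{prop:nablanull} gives $\nabla g=0$, so $g$ is locally constant. Corollary~\ref{cor:lociso} then shows that the restriction of $\xi$ to each connected component of $\Omega_2$ is a rigid motion, and the corollary following Proposition~\ref{prop:essbij} delivers $|g|\equiv 1$ on $\Omega_2$. Alternatively, once linearity is established, the whole conclusion follows at once from Theorem~\ref{thm:sumrep} under premise~(f).

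There is no real obstacle here beyond bookkeeping; the only mildly subtle point is ensuring the move from ``isometric with $T(0)=0$'' to ``linear'' so that the machinery of Theorem~\ref{thm:HomCoMu} and Theorem~\ref{thm:sumrep} becomes available, and then keeping track of which hypothesis is supplied by which earlier result. Everything else is a direct quotation of the results built up in Sections~\ref{sec:iso} and~\ref{sec:intertwining}.
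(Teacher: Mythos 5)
Your proposal is correct and follows essentially the same route as the paper: the paper's proof is exactly ``$T$ is linear by~\cite{Vae03}, hence a weighted composition operator by Theorem~\ref{thm:HomCoMu}, and the claim follows from Theorem~\ref{thm:sumrep}.'' You have merely unpacked the intermediate steps (Corollary~\ref{cor:intertwine}, Proposition~\ref{prop:CoMu}, Proposition~\ref{prop:nablanull}, Corollary~\ref{cor:lociso}) that Theorem~\ref{thm:sumrep} already bundles together, which is accurate but not a different argument.
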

\begin{proof}
	The operator $T$ is linear~\cite{Vae03}.
	Then, by Theorem~\ref{thm:HomCoMu}, $T$ is a weighted composition operator. Now the claim
	follows from Theorem~\ref{thm:sumrep}.
\end{proof}

The remaining theorems of this section give conditions under which the sets $\Omega_1$ and $\Omega_2$ are
congruent. For this we always assume
some mild regularity of the sets, at least regularity in $p$-capacity, see Definition~\ref{def:regcap}.
The following remark explains why this is optimal.
Moreover, it should be noted that this assumptions is very weak, compare Remark~\ref{rem:topreg}.
\begin{remark}\label{rem:regcap}
	Sobolev spaces ``do not see'' very small differences of open sets in the sense that
	$W^{1,p}(\Omega)$ and $W^{1,p}_0(\Omega)$ are by the
	identity mapping isometrically isomorphic to $W^{1,p}(\Omega')$ and $W^{1,p}_0(\Omega')$,
	respectively, whenever $\Omega$ and $\Omega'$ differ only by a $p$-polar set.
	This follows for example from~\cite[Theorem~2.53]{MZ97}, \cite[\S 1.2.5]{MP97},
	and~\cite[Theorem~3.2.15]{Bie05}.

	To give an example, for $1 < p \le N$ we pick $\Omega$ and $\Omega'$ such that they
	differ by a single point. If we let $T$ be the identity operator
	from $W^{1,p}(\Omega)$ to $W^{1,p}(\Omega')$, then $T$ has all nice (operator) properties
	we can ask for, e.g., $T$ is an isometric lattice isomorphism,
	but the domains are not congruent.

	One way to exclude such artificial counterexamples is to pick canonical
	representatives for the equivalence classes of domains that differ only by $p$-polar
	sets. A way to express this choice is to require the domain to be regular
	in $p$-capacity. In fact, for every open set $\Omega$
	there exists a unique open set which is regular in $p$-capacity and differs from
	$\Omega$ only by a $p$-polar set~\cite[Proposition~3.2.6]{Bie05}.
\end{remark}

\begin{theorem}
	Let $p = 2$ and $N \ge 2$, or let $p > 2$.
	Let $\Omega_1$ be regular in $p$-capacity and $\Omega_2$ be connected and topologically regular, i.e.,
	$\Omega_2$ is the interior of its closure.
	Assume that there exists an isometric operator $T$ such that
	$W^{1,p}_0(\Omega_2) \subset TW^{1,p}_0(\Omega_1)$
	and either
	\begin{enumerate}[(a)]
	\item
		$T$ is a lattice homomorphism; or
	\item
		$p > 2$, $T$ is order bounded from $W^{1,p}(\Omega_1)$ to $L^p(\Omega_2)$, and $T(0)=0$.
	\end{enumerate}
	Then $\Omega_1$ and $\Omega_2$ are congruent.
\end{theorem}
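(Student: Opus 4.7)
The plan is to combine the two representation theorems from this section with the congruence criterion of Theorem~\ref{thm:conn}; there is essentially no new work to do, since the heavy lifting has already been done in Sections~\ref{sec:intertwining} and~\ref{sec:cong}.

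First I would verify that in either case $T$ satisfies~\eqref{eq:Trig}. In case (a), Theorem~\ref{thm:isorig} applies directly: its hypotheses (isometric lattice homomorphism plus $W^{1,p}_0(\Omega_2) \subset TW^{1,p}_0(\Omega_1)$) coincide with the ones given, and it yields $Tu = u \circ \xi$ almost everywhere for a local rigid motion $\xi$ on $\Omega_2$, so~\eqref{eq:Trig} holds with $g \equiv 1$. In case (b), the hypotheses are precisely those of Theorem~\ref{thm:orderrig}, which produces the representation $Tu = g \cdot (u \circ \xi)$ with $\xi$ a local rigid motion on $\Omega_2$ and $g$ locally constant with $|g| \equiv 1$. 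So~\eqref{eq:Trig} holds in both cases.

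Next I would invoke Theorem~\ref{thm:conn}. The assumption $W^{1,p}_0(\Omega_2) \subset TW^{1,p}_0(\Omega_1)$ is~\href{ass:W1p0}, and the regularity hypotheses on $\Omega_1$ and $\Omega_2$ are exactly those required by that theorem. It follows that $\xi$ is bijective from $\Omega_2$ onto $\Omega_1$; since $\xi$ extends to an isometric automorphism of $\mathds{R}^N$, this gives a congruence between $\Omega_1$ and $\Omega_2$.

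I do not anticipate any genuine obstacle: the proof is a two-line assembly of Theorem~\ref{thm:isorig} or Theorem~\ref{thm:orderrig} with Theorem~\ref{thm:conn}. The only thing to check is that the hypotheses match up cleanly in each of the two cases, which they do by inspection.
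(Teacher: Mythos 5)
Your proposal is correct and matches the paper's own proof exactly: the paper also deduces~\eqref{eq:Trig} from Theorem~\ref{thm:isorig} in case (a) and Theorem~\ref{thm:orderrig} in case (b), and then concludes via Theorem~\ref{thm:conn}. No gaps.
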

\begin{proof}
	By Theorems~\ref{thm:isorig} and~\ref{thm:orderrig} the operator $T$ satisfies~\eqref{eq:Trig}.
	Now the claim follows from Theorem~\ref{thm:conn}.
\end{proof}

\begin{theorem}
	Let $p=2$ and $N \ge 2$, or let $p > 2$.
	Assume that $\Omega_1$ and $\Omega_2$ be regular in $p$-capacity.
	Assume moreover that there exists an isometric operator $T$ such that
	$TW^{1,p}_0(\Omega_1) = W^{1,p}_0(\Omega_2)$
	and either
	\begin{enumerate}[(a)]
	\item
		$T$ is a lattice homomorphism; or
	\item
		$p > 2$, $T$ is order bounded from $W^{1,p}(\Omega_1)$ to $L^p(\Omega_2)$, and $T(0)=0$.
	\end{enumerate}
	Then there is a pairing between the connected components of
	$\Omega_1$ and $\Omega_2$ where the respective pairs are congruent,
	i.e., $\Omega_1$ can be transformed into $\Omega_2$ by translation
	and rotation of its connected components.
	In particular, if $\Omega_1$ or $\Omega_2$ is connected, then $\Omega_1$
	and $\Omega_2$ are congruent.
\end{theorem}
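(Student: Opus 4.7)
The plan is to reduce the statement to the earlier representation theorems and then invoke Theorem~\ref{thm:W1p0eq}. First, I would apply Theorem~\ref{thm:isorig} under hypothesis~(a) or Theorem~\ref{thm:orderrig} under hypothesis~(b) to conclude that $T$ is a weighted composition operator of the form $Tu = g \cdot (u \circ \xi)$, where $\xi$ is locally a rigid motion on $\Omega_2$ and $g$ is locally constant with $|g|\equiv 1$. In particular the representation~\eqref{eq:Trig} holds.

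Next, I need to verify the hypotheses of Theorem~\ref{thm:W1p0eq}. By assumption $T$ is surjective from $W^{1,p}_0(\Omega_1)$ onto $W^{1,p}_0(\Omega_2)$. Injectivity is automatic: in case~(a) the operator $T$ is linear by definition, and in case~(b) linearity follows from the Vaisala-type result cited in the proof of Theorem~\ref{thm:orderrig}. Since $T$ is linear and isometric, $T$ is injective, and hence bijective from $W^{1,p}_0(\Omega_1)$ onto $W^{1,p}_0(\Omega_2)$. As both $\Omega_1$ and $\Omega_2$ are regular in $p$-capacity, Theorem~\ref{thm:W1p0eq} applies and yields that $\xi$ is bijective from $\Omega_2$ onto $\Omega_1$.

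Finally, since $\xi$ is continuous, bijective, and locally a rigid motion, it is a homeomorphism (its local inverses patch into a continuous inverse). Hence $\xi$ maps each connected component of $\Omega_2$ homeomorphically onto a connected component of $\Omega_1$, and this correspondence is a bijection between the sets of connected components. By Corollary~\ref{cor:lociso}, the restriction of $\xi$ to any connected component of $\Omega_2$ is (the restriction of) a genuine rigid motion of $\mathds{R}^N$, so the paired components are congruent. If either $\Omega_1$ or $\Omega_2$ is itself connected, so must be the other (since $\xi$ produces a bijection of components), and the two sets are congruent. I do not anticipate a substantial obstacle; the main care point is confirming injectivity of $T$ in case~(b), which hinges on invoking the linearity statement from~\cite{Vae03} exactly as in the proof of Theorem~\ref{thm:orderrig}.
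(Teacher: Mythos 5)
Your proposal is correct and follows essentially the same route as the paper: apply Theorems~\ref{thm:isorig}/\ref{thm:orderrig} to obtain~\eqref{eq:Trig}, use isometry (hence injectivity) together with the hypothesis $TW^{1,p}_0(\Omega_1)=W^{1,p}_0(\Omega_2)$ to invoke Theorem~\ref{thm:W1p0eq}, and let the restrictions of $\xi$ to the connected components realize the pairing. Your additional remarks on linearity in case~(b) and on $\xi$ being a homeomorphism merely spell out details the paper leaves implicit.
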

\begin{proof}
	By Theorems~\ref{thm:isorig} and~\ref{thm:orderrig} the operator $T$ satisfies~\eqref{eq:Trig},
	i.e., $Tu = g \cdot (u \circ \xi)$ almost everywhere for all $u \in W^{1,p}(\Omega_1)$
	with a local rigid motion $\xi$. Since $T$ is isometric, it is injective. Hence
	$\xi$ is a bijection of $\Omega_1$ and $\Omega_2$ by Theorem~\ref{thm:W1p0eq}.
	Now the restrictions of $\xi$ to the connected components of $\Omega_2$ realize the pairing
	in the statement of the theorem.
\end{proof}

\bibliography{identification}
\bibliographystyle{amsplain}

\end{document}